\newcommand{\vertiii}[1]{{\left\vert\kern-0.25ex\left\vert\kern-0.25ex\left\vert #1 
		\right\vert\kern-0.25ex\right\vert\kern-0.25ex\right\vert}}
\newcommand{\pcite}[1]{\citeauthor{#1}'s \citeyearpar{#1}}
\newcommand{\df}{\mathrm{d}}
\def\baro{\vskip  .2truecm\hfill \hrule height.5pt \vskip  .2truecm}
\def\barba{\vskip -.1truecm\hfill \hrule height.5pt \vskip .4truecm}
\newtheorem{theorem}{Theorem}
\newtheorem{lemma}[theorem]{Lemma}
\newtheorem{corollary}[theorem]{Corollary}
\newtheorem{remark}[theorem]{Remark}
\newtheorem{proposition}[theorem]{Proposition}
\newcommand{\X}{{\mathsf{X}}}
\newcommand{\Y}{{\mathsf{Y}}}
\author{Qian Qin and James P. Hobert \\ Department of Statistics
  \\ University of Florida} \date{April 2018} \linespread{1.5}
\begin{document}

\title{Convergence complexity analysis of Albert and Chib's algorithm
	for Bayesian probit regression}
\keywords{Drift condition, Geometric ergodicity, High dimensional inference, Large $p$ - small $n$, Markov chain Monte Carlo, Minorization condition}

\maketitle

\begin{abstract}
	The use of MCMC algorithms in high dimensional Bayesian problems has
	become routine.  This has spurred so-called convergence complexity
	analysis, the goal of which is to ascertain how the convergence rate
	of a Monte Carlo Markov chain scales with sample size,~$n$, and/or
	number of covariates,~$p$.  This article provides a thorough
	convergence complexity analysis of \pcite{albert1993bayesian} data
	augmentation algorithm for the Bayesian probit regression model.
	The main tools used in this analysis are drift and minorization
	conditions.  The usual pitfalls associated with this type of
	analysis are avoided by utilizing centered drift functions, which
	are minimized in high posterior probability regions, and by using a
	new technique to suppress high-dimensionality in the construction of
	minorization conditions.  The main result is that the geometric
	convergence rate of the underlying Markov chain is bounded below~1
	both as $n \rightarrow \infty$ (with~$p$ fixed), and as $p
	\rightarrow \infty$ (with~$n$ fixed).  Furthermore, the first
	computable bounds on the total variation distance to stationarity
	are byproducts of the asymptotic analysis.
\end{abstract}

\section{Introduction}
\label{sec:intro}

Markov chain Monte Carlo (MCMC) has become an indispensable tool in
Bayesian analysis, and it is now well known that the ability to
utilize an MCMC algorithm in a principled manner (with regard to
choosing the Monte Carlo sample size, for example) requires an
understanding of the convergence properties of the underlying Markov
chain \citep[see, e.g.,][]{fleg:hara:jone:2008}.  Taking this a step
further, in modern high dimensional problems it is also important to
understand how the convergence properties of the chain change as the
sample size, $n$, and/or number of covariates, $p$, increases.  Denote
the data (i.e., responses and covariates) by~$\mathcal{D}$.  If we
imagine~$n$ or~$p$ (or both) increasing, this leads to consideration
of a sequence of data sets, $\{\mathcal{D}_j\}$, and corresponding
sequences of posterior distributions and Monte Carlo Markov chains.  A
natural question to ask is ``What can we say about the convergence
properties of the Markov chains as $j \to \infty$?''  There is
currently a great deal of interest in questions like this in the MCMC
community \citep[see,
e.g.][]{rajaratnam2015mcmc,durmus2016high,johndrow2016inefficiency,yang2016computational,yang2017complexity}.
\citet{rajaratnam2015mcmc} call this the study of \textit{convergence
	complexity}, and we will follow their lead.

Asymptotic analysis of the convergence properties of a sequence of
Markov chains associated with increasingly large \textit{finite} state
spaces has a long history in the computer science literature, dating
back at least to \citet{sinclair1989approximate}. While the techniques developed in
computer science have been successfully applied to a few problems in
statistics \citep[see, e.g.,][]{yang2016computational}, they are generally not
applicable in situations where the state space is high-dimensional and
uncountable, which is the norm for Monte Carlo Markov chains in Bayesian
statistics.  In this paper, we employ methods based on drift and
minorization conditions to analyze the convergence complexity of one
such Monte Carlo Markov chain.

Let $\pi: \X \to [0,\infty)$ denote an intractable probability density
function (pdf), where $\X \subset \mathbb{R}^d$, and let
$\Pi(\cdot)$ denote the corresponding probability measure, i.e., for
measurable $C$, $\Pi(C) = \int_C \pi(x) \, \df x$.  
Let $K(x,\cdot)$, $x \in \X$,
denote the Markov transition function (Mtf) of an irreducible,
aperiodic, Harris recurrent Markov chain with invariant probability
measure $\Pi$.  (See \citet{meyn2012markov} for definitions.)  The
chain is called \textit{geometrically ergodic} if there exist $M :
\X \rightarrow [0,\infty)$ and $\rho \in [0,1)$ such that
\begin{equation}
\label{eq:ge}
\norm{K^m(x,\cdot) - \Pi(\cdot)}_{\mbox{\tiny{TV}}} \le M(x) \rho^m
\;\; \mbox{for all $x \in \X$ and all $m \in \mathbb{N}$} \;,
\end{equation}
where $\norm{\cdot}_{\mbox{\tiny{TV}}}$ denotes total variation norm,
and $K^m(x,\cdot)$ is the $m$-step Mtf.  The important practical
benefits of basing one's MCMC algorithm on a geometrically ergodic
Markov chain have been well-documented by, e.g.,
\citet{robe:rose:1998}, \citet{jones2001honest},
\citet{fleg:hara:jone:2008} and \citet{latu:mias:niem:2013}.  Define
the \textit{geometric convergence rate} of the chain as
\[
\rho_* = \inf \big\{ \rho \in [0,1] : \text{\eqref{eq:ge} holds for 
	some } M: \X \to [0,\infty) \big\} \;.
\]
Clearly, the chain is geometrically ergodic if and only if $\rho_* <
1$.

Establishing the convergence rate of a practically relevant Monte
Carlo Markov chain can be quite challenging.  A key tool for this
purpose has been the technique developed by
\citet{rosenthal1995minorization}, which allows for the construction
of an upper bound on $\rho_*$ using drift and minorization conditions  \citep[see also][]{meyn1994computable,roberts1999bounds,baxendale2005renewal,hairer2011yet}.
This method, which is described in detail in Section~\ref{sec:d_m},
has been used to establish the geometric ergodicity of myriad Monte
Carlo Markov chains \citep[see,
e.g.,][]{fort2003geometric,marchev2004geometric,roy2010monte,vats2017geometric}.
Since methods based on drift and minorization (hereafter, d\&m) are still the
most (and arguably the only) reliable tools for bounding~$\rho_*$ for
practically relevant Monte Carlo Markov chains on uncountable state
spaces, it is important to know whether they remain useful in the
context of convergence complexity analysis.  Unfortunately, it turns
out that most of the upper bounds on $\rho_*$ that have been produced
using techniques based on d\&m converge to~1 (often exponentially
fast), becoming trivial, as $n$ and/or $p$ grow \citep[see,
e.g.,][]{rajaratnam2015mcmc}.  One example of this is
\pcite{roy2007convergence} analysis of \pcite{albert1993bayesian} data
augmentation algorithm for the Bayesian probit model, which
establishes geometric ergodicity of the underlying Markov chain, but
also leads to an upper bound on $\rho_*$ that converges to~1 as $n
\rightarrow \infty$.

There are, of course, many possible explanations for why the
d\&m-based upper bounds on $\rho_*$ converge to~1.  It could be that
the associated Monte Carlo Markov chains actually have poor asymptotic
properties, or, if not, perhaps d\&m-based
methods are simply not up to the more delicate task of convergence
complexity analysis.  We show that, in the case of
\pcite{albert1993bayesian} chain, neither of these potential
explanations is correct.  Indeed, our careful d\&m analysis of
\pcite{albert1993bayesian} chain (hereafter, A\&C's chain) leads to
upper bounds on~$\rho_*$ that are bounded away from 1 in both the
large~$n$, small~$p$ case, and the large~$p$, small~$n$ case.  We
believe that this is the first successful convergence complexity
analysis of a practically relevant Monte Carlo Markov chain using
d\&m.
The key ideas we use to establish our results
include ``centering'' the drift function to a region in the state
space that the chain visits frequently, and suppressing
high-dimensionality in the construction of minorization conditions.
In particular, for two-block Gibbs chains, we introduce a technique for
constructing asymptotically stable minorization conditions that is
based on the well known fact that the two marginal Markov chains and
the joint chain all share the same geometric convergence rate.

Recently, \citet{yang2017complexity} used a modified version of
\pcite{rosenthal1995minorization} technique to successfully analyze
the convergence complexity of a Gibbs sampler for a simple Bayesian
linear mixed model.  We note that, because one of the variance
components in their model is assumed known, it is actually
straightforward to sample directly from the posterior distribution
using a univariate rejection sampler \citep[Section 3.9]{jone:2001}.
Thus, while \pcite{yang2017complexity} results are impressive, and
their methods may suggest a way forward, the Monte Carlo Markov chain
that they analyzed is not practically relevant.

Before describing our results for A\&C's chain, we introduce an
alternative definition of geometric ergodicity.  Let $L^2(\Pi)$ denote
the set of signed measures $\mu$ that are absolutely continuous with
respect to~$\Pi$, and satisfy $\int_\X ( \df\mu/\df\Pi )^2
\df \Pi < \infty$.  As in \citet{roberts1997geometric}, we say that
the Markov chain with Mtf~$K$ is $L^2$-\textit{geometrically ergodic} if there exists
$\rho<1$ such that for each probability measure $\nu \in L^2(\Pi)$,
there exists a constant $M_{\nu} < \infty$ such that
\begin{equation*}
\norm{\nu K^m (\cdot) - \Pi (\cdot)}_{\mbox{\tiny{TV}}} \le
M_{\nu} \rho^m \;\; \mbox{for all $m \in \mathbb{N}$} \;,
\end{equation*}
where $\nu K^m(\cdot) = \int_\X K^m(x,\cdot) \, \nu(\df x)$.  We
define the $L^2$-\textit{geometric convergence rate}, $\rho_{**}$, to
be the infimum of all $\rho \in [0,1]$ that satisfy this definition.
Not surprisingly, $\rho_{*}$ and $\rho_{**}$ are closely related
\citep[see, e.g.,][]{roberts2001geometric}.

We now provide an overview of our results for
\pcite{albert1993bayesian} Markov chain, starting with a brief
description of their algorithm.  Let $\{X_i\}_{i=1}^n$ be a set of
$p$-dimensional covariates, and let $\{Y_i\}_{i=1}^n$ be a
corresponding sequence of binary random variables such that $Y_i|X_i,B
\sim \mbox{Bernoulli}(\Phi(X_i^T B))$ independently, where $B$ is a $p
\times 1$ vector of unknown regression coefficients, and~$\Phi$ is the
standard normal distribution function.  Consider a Bayesian analysis
using a prior density for $B$ given by
\begin{equation} \label{eq:prior}
\omega(\beta) \propto \exp \Big \{ -\frac{1}{2} (\beta-v)^TQ(\beta-v)
\Big\} \;,
\end{equation}
where $v \in \mathbb{R}^p$, and $Q \in \mathbb{R}^{p \times p}$ is
either a positive definite matrix (proper Gaussian prior), or a zero
matrix (flat improper prior).  Assume for now that the posterior is
proper.  (Propriety under a flat prior is discussed in
Section~\ref{sec:bayesprobitac}.)  As usual, let $X$ denote the $n
\times p$ matrix whose $i$th row is $X_i^T$, and let $Y = (Y_1 \; Y_2
\cdots Y_n)^T$ denote the vector of responses.  The intractable
posterior density is given by
\begin{equation}
\label{eq:post_I}
\pi_{B|Y,X}(\beta|Y,X) \propto \left\{ \prod_{i=1}^n \left(\Phi(X_i^T
\beta)\right)^{Y_i} \left(1-\Phi(X_i^T \beta)\right)^{1-Y_i} \right\}
\omega(\beta) \;.
\end{equation}
The standard method for exploring~\eqref{eq:post_I} is the classical
data augmentation algorithm of~\citet{albert1993bayesian}, which
simulates a Harris ergodic (irreducible, aperiodic, and Harris
recurrent) Markov chain, $\{B_m\}_{m=0}^\infty$, that has invariant
density $\pi_{B|Y,X}$.  In order to state the algorithm, we require a
bit of notation.  For $\theta \in \mathbb{R}$, $\sigma > 0$, and $i
\in \{0,1\}$, let $\mbox{TN}(\theta, \sigma^2; i)$ denote the
$\mbox{N}(\theta,\sigma^2)$ distribution truncated to $(0,\infty)$ if
$i=1$, and to $(-\infty,0)$ if $i=0$.  The matrix $\Sigma := X^TX + Q$
is necessarily non-singular because of propriety.  If the current
state of A\&C's chain is $B_m = \beta$, then the new state,
$B_{m+1}$, is drawn using the following two steps:

\baro \vspace*{2mm}
\noindent {\rm Iteration $m+1$ of the data augmentation algorithm:}
\begin{enumerate}
	\item Draw $\{Z_i\}_{i=1}^n$ independently with $Z_i \sim
	\mbox{TN}(X_i^T \beta, 1; Y_i)$, and let $Z = (Z_1 \; Z_2 \cdots
	Z_n)^T$.
	\item Draw
	$
	B_{m+1} \sim \mbox{N}_p \Big( \Sigma^{-1} \big(X^TZ + Qv \big),
	\Sigma^{-1} \Big) \;.
	$
\end{enumerate}

\barba
\bigskip

The convergence rate of A\&C's chain has been studied by several
authors.  \citet{roy2007convergence} proved that when $Q=0$, the chain
is always geometrically ergodic.  (Again, we're assuming posterior
propriety.)  A similar result for proper normal priors was established
by \citet{chakraborty2016convergence}.  Both results were established
using a technique that does not require construction of a minorization
condition \citep[see][Lemma 15.2.8]{meyn2012markov}, and,
consequently, does not yield an explicit upper bound on $\rho_*$.
Thus, neither paper addresses the issue of convergence complexity.
However, in Section~\ref{sec:rp2} we prove that
\pcite{roy2007convergence} drift function \textit{cannot} be used to
construct an upper bound on $\rho_*$ that is bounded away from~1 as $n
\rightarrow \infty$.  \citet{johndrow2016inefficiency} recently
established a convergence complexity result for the intercept only
version of the model ($p=1$ and $X_i = 1$ for $i = 1,2,\dots,n$) with
a proper (univariate) normal prior, under the assumption that
\textit{all} the responses are successes ($Y_i = 1$ for $i =
1,2,\dots,n$).  Their results, which are based on Cheeger's
inequality, imply that $\rho_{**} \to 1$ as $n \to \infty$, indicating
that the algorithm is inefficient for large samples.

The results established herein provide a much more complete picture of
the convergence behavior of A\&C's chain.  Three different regimes are
considered: (i) fixed $n$ and $p$, (ii) large $n$, small $p$, and
(iii) large $p$, small $n$.  Our analysis is based on two different
drift functions that are both appropriately centered (at the posterior
mode).  One of the two drift functions is designed for regime (ii), and
the other for regime (iii).  We establish d\&m conditions for both drift
functions, and these are used in conjunction with
\pcite{rosenthal1995minorization} result to construct two explicit
upper bounds on $\rho_*$.  They are also used to construct two
computable upper bounds on the total variation distance to
stationarity (as in \eqref{eq:ge}), which improves upon the analyses
of \citet{roy2007convergence} and \citet{chakraborty2016convergence}.

The goal in regime (ii) is to study the asymptotic behavior of the
geometric convergence rate as $n \rightarrow \infty$, when $p$ is
fixed.  To this end, we consider a sequence of data sets,
$\mathcal{D}_n := \{(X_i,Y_i)\}_{i=1}^n$.  So, each time $n$ increases
by 1, we are are given a new $p \times 1$ covariate vector, $X_i$, and
a corresponding binary response, $Y_i$.  To facilitate the asymptotic
study, we assume that the $(X_i,Y_i)$ pairs are generated according to
a \textit{random} mechanism that is governed by very weak assumptions
(that are consistent with the probit regression model).  We show that
there exists a constant $\rho<1$ such that, almost surely, $\limsup_{n
	\to \infty} \rho_*(\mathcal{D}_n) \leq \rho$.  Apart from this
general result, we are also able to show that, in the intercept only
model considered by \citet{johndrow2016inefficiency}, the A\&C chain
is actually quite well behaved as long as the proportion of successes
is bounded away from 0 and 1.  To be specific, let
$\{Y_i\}_{i=1}^\infty$ denote a \textit{fixed} sequence of binary
responses, and let $\hat{p}_n = n^{-1} \sum_{i=1}^n Y_i$.  Our results
imply that, as long as $0 < \liminf_{n \to \infty} \hat{p}_n \le
\limsup_{n \to \infty} \hat{p}_n < 1$, there exists $\rho<1$ such that
both $\rho_*(\mathcal{D}_n)$ and $\rho_{**}(\mathcal{D}_n)$ are
eventually bounded above by~$\rho \hspace*{.3mm}$, and there is a
closed form expression for~$\rho \hspace*{.3mm}$.

In regime (iii), $n$ is fixed and $p \rightarrow \infty$.  There are
several important differences between regimes (ii) and (iii).  First, in
regime (ii), since $p$ is fixed, only a single prior distribution need
be considered.  In contrast, when $p \rightarrow \infty$, we must
specify a sequence of priors, $\{(Q_p,v_p)\}_{p=1}^\infty$, where
$v_p$ is a $p \times 1$ vector, and $Q_p$ is a $p \times p$ positive
definite matrix.  (When $p > n$, a positive definite $Q_p$ is required
for posterior propriety.)  Also, in regime (iii), there is a fixed
vector of responses (of length $n$), and it is somewhat unnatural to
consider the new columns of $X$ to be random.  Let $\{ \mathcal{D}_p \} :=
\{(v_p,Q_p,X_{n \times p},Y)\}$ denote a fixed sequence of priors and data sets,
where $Y$ is a fixed $n \times 1$ vector of responses, and $X_{n \times p}$ is an
$n \times p$ matrix.  We show that, under a natural regularity
condition on $X_{n \times p} Q_p^{-1} X_{n \times p}^T$, there exists a $\rho<1$ such that
$\rho_*(\mathcal{D}_p) \leq \rho$ for all $p$.

The remainder of the paper is laid out as follows.  In
Section~\ref{sec:d_m}, we formally introduce the concept of
\textit{stable} d\&m conditions, and describe techniques that we employ for
constructing such.  The centered drift functions that are used in our
analysis of A\&C's chain are described in
Section~\ref{sec:bayesprobitac}.  In Section~\ref{sec:rp1}, we provide
results for A\&C's chain in the case where $n$ and $p$ are both fixed.
Two sets of d\&m conditions are established, and corresponding exact
total variation bounds on the distance to stationarity are provided.
The heart of the paper is Section~\ref{sec:rp2} where it is shown that
the geometric convergence rate of A\&C's chain is bounded away from 1
as $n \rightarrow \infty$ for fixed $p$, and as $p \rightarrow \infty$
for fixed $n$.  A good deal of technical material is relegated to the Appendix.

\section{Asymptotically Stable Drift and Minorization}
\label{sec:d_m}

Let $\mathsf{X}$ be a set equipped with a countably generated
$\sigma$-algebra ${\cal B}(\mathsf{X})$.  Suppose that $K: \mathsf{X}
\times {\cal B}(\mathsf{X}) \rightarrow [0,1]$ is an Mtf with
invariant probability measure $\Pi(\cdot)$, so that $\Pi(C) = \int_\X
K(x,C) \Pi(\df x)$ for all $C \in \cal B(\X)$.  Assume that the
corresponding Markov chain is Harris ergodic.  Recall the definitions
of geometric ergodicity and geometric convergence rate from
Section~\ref{sec:intro}.  The following result has proven extremely
useful for establishing geometric ergodicity in the context of Monte
Carlo Markov chains used to study complex Bayesian posterior
distributions.

\begin{theorem}[\citet{rosenthal1995minorization}]
	\label{thm:rosen}
	Suppose that $K(x,\cdot)$ satisfies the drift condition
	\begin{equation}
	\label{ine:drift}
	\int_{\X} V(x') K(x,\df x') \leq \lambda V(x) + L, \quad x
	\in \X
	\end{equation}
	for some $V: \X \to [0,\infty)$, $\lambda < 1$ and $L <
	\infty$. Suppose that it also satisfies the minorization condition
	\begin{equation}
	\label{ine: min}
	K(x,\cdot) \geq \varepsilon Q(\cdot) \quad \text{ whenever } V(x) < d
	\end{equation}
	for some $\varepsilon > 0$, probability measure $Q(\cdot)$ on~$\X$,
	and $d > 2L/(1-\lambda)$.  Then assuming the chain is started
	according to the probability measure $\nu(\cdot)$, for any $0 < r <
	1$, we have
	\begin{align*}
	\|\nu &K^m(\cdot) - \Pi(\cdot)\|_{\mbox{\tiny{TV}}} \leq ( 1 -
	\varepsilon)^{rm} + \\& \left( 1 + \frac{L}{1-\lambda} + \int_\X
	V(x) \nu(\df x) \right) \left[ \left( \frac{1+2L+\lambda d}{1 + d}
	\right)^{1-r} \left\{1 + 2(\lambda d + L) \right\}^r \right]^m \;.
	\end{align*}
\end{theorem}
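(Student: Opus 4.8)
The plan is to prove the bound by a coupling argument that uses the minorization to manufacture coalescence opportunities and the drift to guarantee that such opportunities arise often. Write $C = \{x \in \X : V(x) < d\}$ for the small set. The first step is to split the kernel on $C$: since $K(x,\cdot) \ge \varepsilon Q(\cdot)$ for $x \in C$, write $K(x,\cdot) = \varepsilon Q(\cdot) + (1-\varepsilon) R(x,\cdot)$ with $R(x,\cdot)$ a probability measure. I then build a bivariate chain $\{(X_m, X_m')\}$ on $\X \times \X$: start $X_0 \sim \nu$ and $X_0' \sim \Pi$, and at each step, if both coordinates lie in $C$ flip an $\varepsilon$-coin --- on heads draw a common point from $Q$ (the chains coalesce and move together thereafter), on tails advance each coordinate independently by its residual $R$; if at least one coordinate lies outside $C$, advance the two coordinates independently by $K$. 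Each marginal is then a faithful copy of the chain, so $X_m \sim \nu K^m$ while $X_m' \sim \Pi$, and the coupling inequality gives $\|\nu K^m - \Pi\|_{\mbox{\tiny{TV}}} \le P(X_m \ne X_m') \le P(T > m)$, where $T$ is the coalescence time.

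The second step is to record a drift for the bivariate chain in the function $W(x,x') = V(x) + V(x')$. A short computation, using the kernel splitting to handle the coalescence case, shows that the coupling kernel satisfies $E[W(X_{k+1},X_{k+1}') \mid \mathcal{F}_k] \le \lambda W(X_k, X_k') + 2L$ uniformly --- the same $\lambda$ as in \eqref{ine:drift}, with the additive constant doubled. Writing $U_k = 1 + W(X_k,X_k')$ and letting $I_k$ be the indicator that both coordinates lie in $C$ at time $k$ (a ``coalescence opportunity''), I split this drift into two regimes. When $I_k = 1$ one has $W < 2d$, so $E[U_{k+1}\mid \mathcal{F}_k] \le 1 + 2(\lambda d + L) =: \beta$; when $I_k = 0$ at least one coordinate satisfies $V \ge d$, so $U_k > 1 + d$ and $E[U_{k+1} \mid \mathcal{F}_k] \le \alpha\, U_k$ with $\alpha := (1 + 2L + \lambda d)/(1+d)$. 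The hypothesis $d > 2L/(1-\lambda)$ is exactly what forces $\alpha < 1$. Since $I_k$ is $\mathcal{F}_k$-measurable, these combine into $E[U_{k+1}\mid\mathcal{F}_k] \le \alpha^{1-I_k}\beta^{I_k} U_k$.

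The third step exploits these constants to control how often opportunities occur. Let $N_m = \sum_{k=0}^{m-1} I_k$ count the opportunities before time $m$, and set $M_k = \alpha^{-(k - N_k)}\beta^{-N_k}U_k$. The regime-split drift makes $\{M_k\}$ a supermartingale (the $\mathcal{F}_k$-measurable factor $\alpha^{1-I_k}\beta^{I_k}$ cancels the advance of the discount), so $E[M_m] \le E[M_0] = 1 + \int_\X V\,\df\nu + \int_\X V\,\df\Pi \le 1 + \int_\X V\,\df\nu + L/(1-\lambda)$, the last step being the standard bound $\int_\X V\,\df\Pi \le L/(1-\lambda)$ from integrating \eqref{ine:drift} against $\Pi$. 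Because $U_m \ge 1$, on the event $\{N_m < rm\}$ one has $M_m \ge \alpha^{-(1-r)m}\beta^{-rm}$, so Markov's inequality yields $P(N_m < rm) \le E[M_0]\,(\alpha^{1-r}\beta^r)^m$, which is precisely the second term of the claimed bound.

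It remains to decompose $P(T > m) \le P(T > m,\, N_m \ge rm) + P(N_m < rm)$. The second summand is handled above. For the first, on each opportunity the chains coalesce with probability $\varepsilon$ independently of the past, so surviving at least $rm$ opportunities without coalescing has probability at most $(1-\varepsilon)^{rm}$, giving the first term. The main obstacle is the middle step: passing from a one-step drift to clean control on the number of coalescence opportunities. The right device is the discounted supermartingale $M_k$, and the work is in identifying its two bases --- $\alpha$, the contraction rate off $C \times C$, and $\beta$, the worst-case growth on $C \times C$ --- so that the free parameter $r$ interpolates between ``few opportunities but strong drift contraction'' and ``many opportunities but likely coalescence.'' Verifying that $M_k$ is genuinely a supermartingale, and that $d > 2L/(1-\lambda)$ delivers $\alpha < 1$, is where all the quantitative content resides.
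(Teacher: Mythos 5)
The paper gives no proof of this theorem --- it is quoted directly from \citet{rosenthal1995minorization} --- and your argument is a correct and complete reconstruction of Rosenthal's original proof: the $\varepsilon$-splitting bivariate coupling, the drift in $U_k = 1 + V(X_k) + V(X_k')$, and the discounted supermartingale $M_k = \alpha^{-(k-N_k)}\beta^{-N_k}U_k$ with $\alpha = (1+2L+\lambda d)/(1+d)$ and $\beta = 1+2(\lambda d + L)$ are exactly the devices used there. All quantitative steps check out: $d > 2L/(1-\lambda)$ gives $\alpha < 1$; the marginal-is-$K$ property of the coupling yields the uniform bivariate drift $\mathbb{E}[U_{k+1}\mid\mathcal{F}_k] \le \alpha^{1-I_k}\beta^{I_k}U_k$ (including after coalescence); $\mathbb{E}[M_0] \le 1 + L/(1-\lambda) + \int_\X V\,\df\nu$ follows from $\int_\X V\,\df\Pi \le L/(1-\lambda)$; and the decomposition $P(T>m) \le (1-\varepsilon)^{rm} + P(N_m < rm)$ with Markov's inequality on $M_m$ delivers precisely the stated bound.
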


The function $V$ is called the drift (or Lyapunov) function, and $\{x
\in \X: V(x) < d\}$ is called the small set associated with~$V$.  We
will refer to $\varepsilon$ as the minorization number.  Manipulation
of the total variation bound in Theorem~\ref{thm:rosen} leads to
\[
\norm{\nu K^m(\cdot) - \Pi(\cdot)}_{\mbox{\tiny{TV}}} \leq \bigg( 2 +
\frac{L}{1-\lambda} + \int_\X V(x) \nu(\df x) \bigg) \hat{\rho}^m \;,
\]
where 
\begin{equation}
\label{eq:ros_bound}
\hat{\rho} := (1-\varepsilon)^r \vee \left( \frac{1+2L+\lambda d}{1
	+ d} \right)^{1-r} \big\{ 1 + 2(\lambda d + L) \big\}^r \,,
\end{equation}
and $r \in (0,1)$ is arbitrary.   Then, $\hat{\rho}$
is an upper bound on the geometric convergence rate~$\rho_*$. It's easy to verify
that when $\lambda < 1$, $L < \infty$, and $\varepsilon > 0$, there
exists $r \in (0,1)$ such that $\hat{\rho} < 1$. 

The bound $\hat{\rho}$ has a reputation for being too conservative.
This is partly due to the fact that there are toy examples where the
true $\rho_*$ is known, and $\hat{\rho}$ is quite far off \citep[see,
e.g.,][]{rosenthal1995minorization}.  There also exist myriad
analyses of practical Monte Carlo Markov chains where the d\&m
conditions \eqref{ine:drift} and \eqref{ine: min} have been
established (proving that the underlying chain is indeed geometrically
ergodic), but the total variation bound of Theorem~\ref{thm:rosen} is
useless because $\hat{\rho}$ is so near unity.  Of course, the quality
of the bound $\hat{\rho}$ depends on the choice of drift function, and
the sharpness of~\eqref{ine:drift} and~\eqref{ine: min}.  Our results
for the A\&C chain suggest that poorly chosen drift functions and/or
loose inequalities in the d\&m conditions are to blame for (at least)
some of the unsuccessful applications of Theorem~\ref{thm:rosen}.  We
now introduce the concept of asymptotically stable d\&m.

Consider a sequence of geometrically ergodic Markov chains,
$\{\Psi^{(j)}\}_{j=1}^\infty$, with corresponding geometric
convergence rates given by $\rho_*^{(j)}$.  (In practice,~$j$ is
usually the sample size,~$n$, or number of covariates,~$p$.) We are
interested in the asymptotic behavior of the rate sequence.  For
example, we might want to know if it is bounded away from~1.  Suppose
that for each chain, $\Psi^{(j)}$, we have d\&m conditions defined
through $\lambda^{(j)}$, $L^{(j)}$, and $\varepsilon^{(j)}$, and thus
an upper bound on the convergence rate, $\hat{\rho}^{(j)} \in
[\rho_*^{(j)},1)$.  The following simple result (whose proof is left
to the reader) provides conditions under which these upper bounds
are \textit{unstable}, that is, $\limsup_{j \to \infty}
\hat{\rho}^{(j)} = 1$.

\begin{proposition}
	\label{prop:stability}
	Suppose that there exists a subsequence of
	$\{\Psi^{(j)}\}_{j=0}^{\infty}$, call it
	$\{\Psi^{(j_l)}\}_{l=0}^{\infty}$, that satisfies one or more
	of the following three conditions: (i) $\lambda^{(j_l)} \to
	1$, (ii) $L^{(j_l)} \to \infty$, while~$\varepsilon^{(j_l)}$
	is bounded away from~$1$ and~$\lambda^{(j_l)}$ is bounded away
	from~$0$, (iii) $\varepsilon^{(j_l)} \to 0$.  Then the
	corresponding subsequence of the upper bounds,
	$\hat{\rho}^{(j_l)}$, converges to~1.
\end{proposition}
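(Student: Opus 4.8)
The plan is to analyze the upper bound $\hat\rho^{(j)}$ by studying each of its two competing terms separately, since $\hat\rho^{(j)}$ is the maximum of $(1-\varepsilon^{(j)})^r$ and a second factor built from $\lambda^{(j)}$, $L^{(j)}$, and the small-set radius $d^{(j)}$. Because $\hat\rho^{(j)}$ dominates $\rho_*^{(j)} \le \hat\rho^{(j)} < 1$ and is a maximum, it suffices to show that under any one of the three hypotheses, at least one of the two terms tends to $1$ along the subsequence; the other term is automatically bounded above by $1$ by the remark following Theorem~\ref{thm:rosen}. I would carry out the argument for each case in turn, restricting to the subsequence $\{j_l\}$ throughout and suppressing the index where convenient.

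First I would handle case (iii), which is the cleanest: if $\varepsilon^{(j_l)} \to 0$, then for every fixed $r \in (0,1)$ the term $(1-\varepsilon^{(j_l)})^r \to 1$, and since this term is one of the two arguments of the maximum defining $\hat\rho^{(j_l)}$, we get $\liminf_l \hat\rho^{(j_l)} \ge 1$, forcing convergence to $1$. Next, for case (i), where $\lambda^{(j_l)} \to 1$, the natural route is to recall that the minorization radius must satisfy $d^{(j_l)} > 2L^{(j_l)}/(1-\lambda^{(j_l)})$, so as $\lambda^{(j_l)} \to 1$ the constraint pushes $\lambda^{(j_l)} d^{(j_l)}$ to be large relative to $1+d^{(j_l)}$; I would show the second factor $\big((1+2L+\lambda d)/(1+d)\big)^{1-r}\{1+2(\lambda d + L)\}^r$ is bounded below by a quantity tending to $1$, the key point being that the ratio $(1+2L+\lambda d)/(1+d)$ stays bounded away from $0$ while the bracketed term $1+2(\lambda d + L)$ blows up, so the product cannot stay below any constant $<1$.

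For case (ii), where $L^{(j_l)} \to \infty$ with $\varepsilon^{(j_l)}$ bounded away from $1$ and $\lambda^{(j_l)}$ bounded away from $0$, the same second factor is again the operative one: the constraint $d^{(j_l)} > 2L^{(j_l)}/(1-\lambda^{(j_l)})$ together with $\lambda^{(j_l)}$ bounded away from $0$ forces $\lambda^{(j_l)} d^{(j_l)} \to \infty$, so the bracket $1+2(\lambda d + L) \to \infty$ while the ratio stays positive, and I would conclude the product exceeds any fixed threshold below $1$. The auxiliary hypotheses on $\varepsilon$ and $\lambda$ in case (ii) are what prevent the degenerate escape in which $L$ growing is silently compensated; in particular $\lambda$ bounded away from $0$ is precisely what converts $d \to \infty$ into $\lambda d \to \infty$.

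The main obstacle will be making the case (i) and (ii) estimates uniform over the free parameter $r \in (0,1)$. Since $\hat\rho^{(j_l)}$ is defined for arbitrary $r$ but the bound used to control $\rho_*$ is valid for the optimizing $r$, I must argue that the second factor tends to $1$ \emph{simultaneously} for the relevant choice of $r$ — or, more carefully, that no choice of $r$ can keep $\hat\rho^{(j_l)}$ bounded away from $1$. The clean way around this is to observe that for the product $A^{1-r} B^r$ with $A$ bounded away from $0$ and $B \to \infty$, taking logarithms gives $(1-r)\log A + r \log B$, and since $\log B \to +\infty$ while $\log A$ is bounded, the expression is negative and bounded away from $0$ only if $r$ is forced to $0$; but driving $r \to 0$ sends the \emph{other} term $(1-\varepsilon)^r \to 1$. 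Formalizing this trade-off — that the two terms cannot be simultaneously pushed below a common bound $<1$ — is the crux, and I would present it as a short lemma on the infimum over $r$ of the maximum of the two factors.
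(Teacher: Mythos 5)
The paper offers no proof to compare against (Proposition~\ref{prop:stability} is stated ``whose proof is left to the reader''), so the only question is whether your argument is airtight. Two of your three cases are. In case (iii), the uniform bound $(1-\varepsilon)^r \ge 1-\varepsilon$ handles every choice of $r$ at once, so $\hat{\rho}^{(j_l)} \ge 1-\varepsilon^{(j_l)} \to 1$. In case (ii), your trade-off lemma over $r$ is exactly the right crux: since $d > 2L/(1-\lambda) \ge 2L$, the bracket satisfies $1+2(\lambda d + L) \ge 1+2L \to \infty$, the ratio satisfies $(1+2L+\lambda d)/(1+d) \ge \lambda \ge \lambda_0 > 0$, so keeping the second factor below a fixed $\rho<1$ forces $r_l \to 0$ at rate $O(1/\log B_l)$, whereupon $(1-\varepsilon^{(j_l)})^{r_l} \ge \delta^{r_l} \to 1$ precisely because $\varepsilon^{(j_l)} \le 1-\delta$. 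That is a complete and correct argument, and it correctly locates where the auxiliary hypotheses of (ii) enter.

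Case (i), however, has a genuine gap, for two reasons. First, your claim that the bracket blows up requires $d^{(j_l)} \to \infty$, and the constraint $d > 2L/(1-\lambda)$ delivers that only if $L^{(j_l)}$ is bounded away from $0$ --- which (i) does not assume (take $L^{(j_l)} = (1-\lambda^{(j_l)})^2$). Second, and more seriously, the asserted implication ``ratio bounded away from $0$ plus bracket $\to \infty$ implies the product cannot stay below any constant $<1$'' is false as stated: with $A = 1/5$ fixed and $r_l = 1/\log B_l$ one gets $A^{1-r_l} B_l^{r_l} \to e/5 < 1$. Your fallback --- that small $r$ drives $(1-\varepsilon)^r \to 1$ --- is unavailable here because (i) makes no assumption on $\varepsilon$: under (i) one may simultaneously have $\varepsilon^{(j_l)} \to 1$, e.g.\ $\varepsilon_l = 1-e^{-l}$ with $r_l = 1/l$ gives $(1-\varepsilon_l)^{r_l} = e^{-1}$, bounded away from $1$, so the trade-off lemma cannot close this case. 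The repair is one line and makes (i) the easiest case: since $1+2L+\lambda d \ge \lambda(1+d)$ (because $1+2L \ge 1 \ge \lambda$) and the bracket is at least $1$,
\begin{equation*}
\left( \frac{1+2L+\lambda d}{1+d} \right)^{1-r} \big\{ 1 + 2(\lambda d + L) \big\}^r \;\ge\; \lambda^{1-r} \;\ge\; \lambda \;,
\end{equation*}
uniformly over $r \in (0,1)$ and over all admissible $d > 2L/(1-\lambda)$. Hence $\hat{\rho}^{(j_l)} \ge \lambda^{(j_l)} \to 1$ directly from \eqref{eq:ros_bound}, with no reference to $\varepsilon$, $L$, the bracket, or the $r$-trade-off. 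The essential monotonicity fact you missed is that the ratio is bounded below by $\lambda$ itself --- i.e., under (i) it tends to $1$, not merely stays away from $0$.
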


If one (or more) of the conditions in Proposition~\ref{prop:stability}
holds for some subsequence of $\{\Psi^{(j)}\}_{j=0}^{\infty}$, then we
say that the d\&m conditions are (asymptotically) unstable (in~$j$).
On the other hand, if $(\mbox{i}')$~$\lambda^{(j)}$ is bounded away
from~$1$, $(\mbox{ii}')$~$L^{(j)}$ is bounded above, and
$(\mbox{iii}')$~$\varepsilon^{(j)}$ is bounded away from~$0$, then the
sequence $\hat{\rho}^{(j)}$ can be bounded away from~$1$, thus giving
an asymptotically nontrivial upper bound on~$\rho_*^{(j)}$.  We say
that the drift conditions are stable if $(\mbox{i}')$ and
$(\mbox{ii}')$ hold, and likewise, that the minorization conditions
are stable if $(\mbox{iii}')$ holds.

Before moving on to describe the techniques that we use to develop
stable d\&m for the A\&C chain, we note that elementary linear
transformations of the drift function can affect the quality
of~$\hat{\rho}$, and even stability.  It's easy to show that, while
multiplying the drift function by a scale factor will affect~$L$, it
will not affect the quality of the minorization inequality~\eqref{ine:
	min} in any non-trivial way.  Subtracting a positive number from~$V$
(while preserving its non-negativity) will, on the other hand, always
lead to an improved bound~$\hat{\rho}$.  Needless to say, we will only
deal with instability that cannot be prevented by these trivial
transformations.  In particular, throughout the article, we consider
only drift functions whose infimums are~$0$, that is, we make the
elementary transformation $V(x) \mapsto V(x) - \inf_{x' \in \X}
V(x')$.

To obtain stable d\&m for the A\&C chain, we will exploit the notion
of ``centered" drift functions. Theorem~\ref{thm:rosen} is based on a
coupling construction, in which two copies of the Markov chain
coalesce with probability~$\varepsilon$ each time they (both) enter
the small set. The total variation distance between the two chains at
time~$m$ is then bounded above by~$1$ minus the probability of
coalescence in~$m$ iterations. Thus, loosely speaking, we want the
chains to visit the small set as often as possible, without making the
small set too large. (Larger small sets usually result in
smaller~$\varepsilon$, as indicated by~\eqref{ine: min}.)  So, it
makes sense to use a drift function that is centered in the sense
that it takes small values in the part of the state space where the
chain spends the bulk of its time.  Of course, if the chain is well
suited to the problem, then it should linger in the high posterior
probability regions of~$\X$.

The idea of centering is not new, and has been employed without
emphasis by many authors.  In this article, we illustrate the
importance of centering to stable d\&m, especially when~$n$ is large.
Indeed, in Section~\ref{sec:rp2} it is shown that, for A\&C's chain,
in the large~$n$, small~$p$ regime, the un-centered drift function
employed by \citet{roy2007convergence} cannot possibly lead to stable
d\&m, while a centered version of the same drift function does.  The
intuition behind these results is as follows.  By~\eqref{ine:drift},
$d > 2L/(1-\lambda) \geq 2\Pi V$, where $\Pi V := \int_\X V(x)
\Pi(dx)$. Hence, $\Pi V$ controls the volume of the small set.  In
Bayesian models, as~$n$ increases, the posterior is likely to
concentrate around a single point in the state space.  Consider a
sequence of posterior distributions and drift functions,
$\{(\Pi^{(n)}, V^{(n)})\}$, such that $\Pi^{(n)}$ concentrates around
a point $x_0$. Heuristically, we expect $\Pi^{(n)} V^{(n)}$ to be
close to $V^{(n)}(x_0)$ for large~$n$.  Therefore, when $n$ is large,
if the drift functions are minimized at or near $x_0$, then we will
have a better chance of controlling the volumes of the small sets, and
bounding the minorization numbers away from 0.

Another technique we use to achieve stable d\&m for the A\&C chain is
a dimension reduction trick that is designed specifically for
two-block Gibbs samplers and data augmentation algorithms. We begin by
describing a common difficulty encountered in the convergence analysis
of such algorithms.  Suppose that $\X \subset \mathbb{R}^p$, and $K(x,
\cdot)$ is associated with a Markov transition density (Mtd) of the
form
\begin{equation} 
\label{eq:da}
k(x,x') = \int_{\mathbb{R}^n} s(x'|z) h(z|x) \, \df z \;,
\end{equation}
where $n$ is the sample size, $z = (z_1 \,z_2 \cdots z_n)^T \in
\mathbb{R}^n$ is a vector of latent data, and $s: \X \times
\mathbb{R}^n \rightarrow [0,\infty)$ and $h: \mathbb{R}^n \times \X
\rightarrow [0,\infty)$ are conditional densities associated with
some joint density on $\X \times \mathbb{R}^n$. Assume that
$h(z|x)$ can be factored as follows:
\[
h(z|x) = \prod_{i=1}^{n} h_i(z_i|x) \;,
\]
where, for each $i$, $h_i: \mathbb{R} \times \X \rightarrow
[0,\infty)$ is a univariate (conditional) pdf.  Usually, $s(\cdot|z)$
and $h_i(\cdot|x)$ are tractable, but there is no closed form for
$k(x,x')$.  However, there is a well-known argument for establishing
a minorization condition in this case. Suppose that whenever $V(x) <
d$,
\[
h_i(t|x) > \epsilon_i \nu_i(t) \;, \quad i=1,2,\dots,n
\]
where $\epsilon_i > 0$ and $\nu_i: \mathbb{R} \to [0,\infty)$ is a
pdf.  Then, whenever $V(x)<d$, we have
\begin{equation}
\label{unstablemin}
k(x,x') > \left( \prod_{i=1}^{n} \epsilon_i \right)
\int_{\mathbb{R}^n} s(x'|z) \prod_{i=1}^{n} \nu_i(z_i) \, \df z \;.
\end{equation}
Since $\int_{\mathbb{R}^n} s(x'|z) \prod_{i=1}^{n} \nu_i(z_i) \, \df
z$ is a pdf on $\X$, \eqref{unstablemin} gives a minorization
condition with $\varepsilon = \prod_{i=1}^{n} \epsilon_i$.
Unfortunately, this quantity will almost always converge to 0 as $n
\to \infty$.  Consequently, if our sequence of Markov chains are
indexed by $n$, then we have unstable minorization.  This problem is
well known \citep[see, e.g.,][]{rajaratnam2015mcmc}.

The instability of the minorization described above is due to the fact
that the dimension of $z$ is growing with $n$.  However, it is often
the case that $s(x|z)$ depends on~$z$ only through $f(z)$, where $f:
\mathbb{R}^n \to \Y$ is a function into some fixed space $\Y$, say $\Y
\subset \mathbb{R}^q$, where $q$ does not depend on $n$.  Then
integrating along $f(z) = \gamma$ in~\eqref{eq:da} yields
\begin{equation}
\label{eq:alt_rep}
k(x,x') = \int_{\Y} \tilde{s} \left(x'|\gamma \right) \tilde{h}
\big(\gamma|x \big) \, \df \gamma \;,
\end{equation}
where $\tilde{s}(x'|f(z)) = s(x'|z)$, and
\[
\int_C \tilde{h}(\gamma|x) \, \df \gamma = \int_{\{z: f(z)\in C\}}
h(z|x) \, \df z
\]
for all $x \in \X$ and any measurable $C \subset \Y$.  Note that this
new representation of $k(x,x')$ no longer contains~$n$ explicitly, and
the high dimensionality problem for~$z$ is resolved.  However, we now
have a new problem.  Namely, $\tilde{h}(\gamma|x)$ is likely to be
quite intractable.  Fortunately, the following result provides a way
to circumvent this difficulty.

\begin{proposition}
	\label{prop:bad_ref}
	Assume that we have a drift condition for $k(x,x')$, i.e.,
	\begin{equation} \label{driftori}
	\int_{\X} V(x') k(x,x') \, \df x' \leq \lambda V(x) + L \;,
	\end{equation}
	where $V:\X \to [0,\infty)$, $\lambda \in [0,1)$, and $L$ is finite.  Assume further that
	$k(x,x')$ can be written in the form~\eqref{eq:alt_rep}.  Define a
	Mtd $\tilde{k}: \Y \times \Y \rightarrow [0,\infty)$ as follows:
	\[
	\tilde{k}(\gamma, \gamma') = \int_{\X} \tilde{h}(\gamma'|x)
	\tilde{s}(x|\gamma) \, \df x \,.
	\]
	If $\tilde{V}(\gamma) = \int_{\X} V(x) \tilde{s}(x|\gamma) \, \df x + c$  is finite and non-negative for all
	$\gamma \in \Y$, where~$c$ is some constant, then the following drift condition holds for~$\tilde{k}$,
	\begin{equation} \label{driftflipped}
	\int_{\Y} \tilde{V}(\gamma') \tilde{k}(\gamma,\gamma') \, \df \gamma' < \lambda \tilde{V}(\gamma) + \tilde{L} \;,
	\end{equation}
	where $\tilde{L} = L + c(1-\lambda)$.
\end{proposition}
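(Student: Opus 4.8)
The plan is to expand the left-hand side of~\eqref{driftflipped} directly, peel off the additive constant~$c$, apply Tonelli's theorem to interchange the order of integration, and then recognize the transition density~$k$ via its representation~\eqref{eq:alt_rep} so that the original drift condition~\eqref{driftori} can be invoked.

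First I would substitute the definitions of~$\tilde{V}$ and~$\tilde{k}$ into the left-hand side of~\eqref{driftflipped}. Writing $\tilde{V}(\gamma') = \int_\X V(x')\tilde{s}(x'|\gamma')\,\df x' + c$, the integral splits into a constant part and a main part. For the constant part, note that $\tilde{k}(\gamma,\cdot)$ is a genuine probability density on~$\Y$: indeed, $\int_\Y \tilde{k}(\gamma,\gamma')\,\df\gamma' = \int_\X \tilde{s}(x|\gamma)\big(\int_\Y \tilde{h}(\gamma'|x)\,\df\gamma'\big)\,\df x = 1$, because $\tilde{h}(\cdot|x)$ is the pushforward of the density $h(\cdot|x)$ and $\tilde{s}(\cdot|\gamma)$ is a density. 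Hence the constant part contributes exactly~$c$.

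For the main part, $\int_\Y \big(\int_\X V(x')\tilde{s}(x'|\gamma')\,\df x'\big)\tilde{k}(\gamma,\gamma')\,\df\gamma'$, I would substitute $\tilde{k}(\gamma,\gamma') = \int_\X \tilde{h}(\gamma'|w)\tilde{s}(w|\gamma)\,\df w$ and, since every factor is nonnegative, use Tonelli to reorder the integrals as
\[
\int_\X \tilde{s}(w|\gamma)\left[\int_\X V(x')\left(\int_\Y \tilde{s}(x'|\gamma')\tilde{h}(\gamma'|w)\,\df\gamma'\right)\df x'\right]\df w \,.
\]
The crucial observation, a reverse reading of~\eqref{eq:alt_rep}, is that the innermost integral over~$\gamma'$ equals $k(w,x')$. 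Thus the bracketed quantity is $\int_\X V(x')k(w,x')\,\df x'$, which is bounded by $\lambda V(w) + L$ thanks to~\eqref{driftori}. Integrating this bound against the probability density $\tilde{s}(\cdot|\gamma)$ and using $\int_\X V(w)\tilde{s}(w|\gamma)\,\df w = \tilde{V}(\gamma) - c$ gives $\lambda(\tilde{V}(\gamma) - c) + L$ for the main part. Adding back the constant~$c$ yields $\lambda\tilde{V}(\gamma) + L + c(1-\lambda) = \lambda\tilde{V}(\gamma) + \tilde{L}$, matching the claimed bound.

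The individual computations are elementary; the main obstacle is bookkeeping. I expect the delicate points to be (a) justifying the two applications of Tonelli, which rests on the assumed finiteness of~$\tilde{V}$ together with the nonnegativity of $V$, $\tilde{s}$, and $\tilde{h}$, and (b) correctly matching the reordered latent integral back to the representation~\eqref{eq:alt_rep} of~$k$, since the arguments of $\tilde{s}$ and $\tilde{h}$ must be tracked carefully as the order of integration changes. (The inequality obtained is in fact $\leq$; the strict form in~\eqref{driftflipped} costs nothing, as $\tilde{L}$ may be enlarged by an arbitrarily small amount.)
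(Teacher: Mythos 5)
Your proof is correct and is essentially the paper's own argument run in the opposite direction: the paper multiplies the drift inequality, rewritten via~\eqref{eq:alt_rep}, by $\tilde{s}(x|\gamma)$ and integrates out~$x$, which is exactly your Tonelli rearrangement read backwards. Your closing parenthetical is also apt—the paper's proof likewise only yields $\leq$ in~\eqref{driftflipped}, so the strict inequality stated there should be read as non-strict (or as holding after an arbitrarily small enlargement of~$\tilde{L}$).
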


\begin{proof}
	Our assumptions imply that
	\begin{equation} \nonumber
	\int_{\X} V(x') \int_{\Y} \tilde{s}(x'|\gamma' ) \tilde{h}(\gamma'|x)
	\, \df \gamma' \, \df x' \leq \lambda V(x) + L \;.
	\end{equation}
	Multiplying both sides of the inequality by $\tilde{s}(x|\gamma)$,
	and integrating out~$x$ yields the result.
\end{proof}
\begin{remark}
	Note that if we set $c \leq 0$ (while preserving the non-negativity of~$\tilde{V}$) in the above proposition, then~\eqref{driftflipped} is stable whenever the original drift~\eqref{driftori} is stable.
\end{remark}

As we now explain, Proposition~\ref{prop:bad_ref} has important
implications.  Indeed, it is well known that the Markov chains driven
by $k$ and $\tilde{k}$ (which we call the ``flipped'' version of $k$)
share the same geometric convergence rate \citep[see,
e.g.,][]{roberts2001markov,diaconis2008gibbs}.  Thus, we can analyze~$k$ indirectly through the flipped chain.  Now, as mentioned
above, $\tilde{s}(x|f(z)) = s(x|z)$ is often tractable.  Suppose that
there exists some $\tilde{\varepsilon} > 0$ and pdf $\tilde{\nu}: \X
\to [0, \infty)$ such that $\tilde{s}(x|\gamma) \geq
\tilde{\varepsilon} \, \tilde{\nu}(x)$ when $\tilde{V}(\gamma) < \tilde{d}$, where $\tilde{d} > 2\tilde{L}/(1-\lambda)$.
Then we have the following minorization condition for the flipped
chain:
\[
\tilde{k}(\gamma, \gamma') \geq \tilde{\varepsilon}
\int_{\X} \tilde{h}(\gamma'|x) \tilde{\nu}(x) \, \df x \quad
\text{ whenever } \tilde{V}(\gamma) < \tilde{d} \;,
\]
%
which is stable as long as~$\tilde{\varepsilon}$ is bounded away
from~$0$ as $n \rightarrow \infty$. This, along with~\eqref{driftflipped}, allows us to construct potentially stable bounds on~$\rho_{*}^{(n)}$ for the flipped chains, and thus for the original chains on~$\X$ as well.  This is exactly how
we analyze A\&C's chain in the large~$n$, small~$p$ regime.  It turns
out that the flipped chain argument can also be used to establish d\&m
conditions that are stable in~$p$, and we will exploit this in our
analysis of A\&C's chain in the large~$p$, small~$n$ regime.

We end this section with a result that allows us to use information
about a flipped chain to get total variation bounds for the original.
The following result follows immediately from
Proposition~\ref{prop:tv_flip}, which is stated and proven in
Subsection \ref{app:tv_flip} of the Appendix.

\begin{corollary}
	\label{cor:tv_flip}
	Suppose we have d\&m conditions for a flipped
	chain (which is driven by $\tilde{k}$).  Let $\lambda$, $\tilde{L}$, and
	$\tilde{\varepsilon}$ denote the drift and minorization parameters,
	and let $\hat{\rho}_f$ denote the corresponding bound on the
	geometric convergence rate obtained through Theorem~\ref{thm:rosen}. 
	Then, if the original chain is started at  $x \in \X$,
	we have, for $m \ge 1$,
	\[
	\norm{K^m(x, \cdot) - \Pi(\cdot)}_{\mbox{\tiny{TV}}} \leq \bigg( 2 +
	\frac{\tilde{L}}{1-\lambda} + \int_{\Y} \tilde{V}(\gamma) \tilde{h}(\gamma|x) \, \df \gamma \bigg) \hat{\rho}_f^{m-1} \;.
	\]
\end{corollary}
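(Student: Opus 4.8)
The plan is to exploit the two-block structure linking the original chain to its flipped counterpart, and thereby reduce the total variation bound for $K$ to the bound for $\tilde{k}$ that Theorem~\ref{thm:rosen} already supplies. Write $H$ for the Markov transition from $\X$ to $\Y$ with density $\tilde{h}(\gamma|x)$, and $S$ for the Markov transition from $\Y$ to $\X$ with density $\tilde{s}(x|\gamma)$. By the representation~\eqref{eq:alt_rep}, a single step of the original chain factors as $K = HS$ (draw $\gamma \sim \tilde{h}(\cdot|x)$, then $x' \sim \tilde{s}(\cdot|\gamma)$), while the flipped chain is $\tilde{k} = SH$. The first step I would carry out is to verify, by directly composing kernels, the operator identity
\[
K^m = (HS)^m = H\, \tilde{k}^{\,m-1}\, S \;, \qquad m \geq 1 \;,
\]
together with the stationarity relation $\Pi = \tilde{\Pi} S$, where $\tilde{\Pi}$ is the invariant measure of the flipped chain (this holds because $\Pi$ and $\tilde{\Pi}$ are the two marginals of a common joint distribution whose conditionals are $\tilde{s}$ and $\tilde{h}$).

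Next, fix the start $x \in \X$ and let $\mu_x := \tilde{h}(\cdot|x)$ be the law on $\Y$ obtained from one application of $H$. The identity above gives $K^m(x,\cdot) = \big(\mu_x\, \tilde{k}^{\,m-1}\big) S$ and $\Pi(\cdot) = \tilde{\Pi} S$, so both the $m$-step law of the original chain and its target are images under the \emph{same} kernel $S$ of two measures on $\Y$. The key step is then the standard fact that a Markov kernel is a contraction in total variation, $\norm{\nu_1 S - \nu_2 S}_{\mbox{\tiny{TV}}} \leq \norm{\nu_1 - \nu_2}_{\mbox{\tiny{TV}}}$. Applying this with $\nu_1 = \mu_x\, \tilde{k}^{\,m-1}$ and $\nu_2 = \tilde{\Pi}$ yields
\[
\norm{K^m(x,\cdot) - \Pi(\cdot)}_{\mbox{\tiny{TV}}} \leq \norm{\mu_x\, \tilde{k}^{\,m-1} - \tilde{\Pi}}_{\mbox{\tiny{TV}}} \;,
\]
which transfers the problem entirely onto the flipped chain.

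Finally, I would recognize the right-hand side as the error, after $m-1$ steps, of the flipped chain started from the initial distribution $\nu = \mu_x$. Since we are assuming d\&m conditions for $\tilde{k}$ with parameters $\lambda$, $\tilde{L}$, $\tilde{\varepsilon}$ and associated rate $\hat{\rho}_f$, the manipulated form of Theorem~\ref{thm:rosen} displayed just below its statement applies verbatim to $\tilde{k}$, giving
\[
\norm{\mu_x\, \tilde{k}^{\,m-1} - \tilde{\Pi}}_{\mbox{\tiny{TV}}} \leq \bigg( 2 + \frac{\tilde{L}}{1-\lambda} + \int_{\Y} \tilde{V}(\gamma)\, \mu_x(\df \gamma) \bigg) \hat{\rho}_f^{\,m-1} \;.
\]
Since $\int_{\Y} \tilde{V}(\gamma)\, \mu_x(\df\gamma) = \int_{\Y} \tilde{V}(\gamma)\, \tilde{h}(\gamma|x)\,\df\gamma$, chaining the last two displays produces exactly the asserted bound.

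As for where the real work lies: the contraction inequality and the invocation of Theorem~\ref{thm:rosen} are routine, so the main obstacle is establishing the operator identity $K^m = H\,\tilde{k}^{\,m-1}\,S$ cleanly at the level of kernels (tracking which argument is integrated at each composition) and, relatedly, confirming $\Pi = \tilde{\Pi}S$ and that $\tilde{\Pi}$ is genuinely invariant for $\tilde{k}$. These are precisely the bookkeeping facts that Proposition~\ref{prop:tv_flip} is designed to package, and I expect the corollary to fall out once that machinery is in place; the appearance of $\hat{\rho}_f^{\,m-1}$ rather than $\hat{\rho}_f^{\,m}$, and of $\tilde{h}(\gamma|x)$ in the leading constant, are the tell-tale signatures of the single extra half-step $H$ that converts an $\X$-start into a $\Y$-start.
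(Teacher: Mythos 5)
Your proposal is correct and takes essentially the same route as the paper: its Proposition~\ref{prop:tv_flip} is proved by exactly your kernel identity $k^{(m)}(x,x') = \int_{\Y^2} \tilde{s}(x'|\gamma')\,\tilde{k}^{(m-1)}(\gamma,\gamma')\,\tilde{h}(\gamma|x)\,\df\gamma\,\df\gamma'$ together with $\Pi = \tilde{\Pi}S$ and the total variation contraction under the kernel $S$, after which the corollary is the specialization to a point start with the rate function supplied by the manipulated form of Theorem~\ref{thm:rosen} applied to the flipped chain. The only difference is organizational: the paper packages the transfer step as a standalone proposition with a generic bound $R(\gamma,m)$ and an arbitrary initial distribution $\nu$, whereas you inline that machinery directly for $\nu = \delta_x$.
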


In the next section, we begin our analysis of the A\&C chain.

\section{Albert and Chib's Markov Chain and the Centered Drift Functions}
\label{sec:bayesprobitac}

\subsection{Basics}
\label{ssec:basics}

Let $\{X_i\}_{i=1}^n$, $\{Y_i\}_{i=1}^n$, and $B \in \mathbb{R}^p$ be defined as in the Introduction, so that 
$Y_i|X_i,B \sim \mbox{Bernoulli}(\Phi(X_i^T B))$
independently.
Suppose that, having observed the data,
$\mathcal{D} := \{(X_i,Y_i)\}_{i=1}^n$, we wish to perform a Bayesian
analysis using a prior density for $B$ given by~\eqref{eq:prior}. Recall that~$X$ and~$Y$ denote, respectively, the design matrix and vector of responses.  The
posterior density~\eqref{eq:post_I} is proper precisely when
\[
\int_{\mathbb{R}^p} \prod_{i=1}^n \left(\Phi(X_i^T
\beta)\right)^{Y_i} \left(1-\Phi(X_i^T \beta)\right)^{1-Y_i}
\omega(\beta) \, d\beta < \infty \;.
\]
When~$Q$ is positive definite, $\omega(\beta)$ is a proper normal
density, and the posterior is automatically proper.  If $Q = 0$, then
propriety is not guaranteed.  Define $X_*$ as the $n \times p$ matrix
whose $i$th row is $-X_i^T$ if $Y_i=1$, and $X_i^T$ if $Y_i = 0$.
\citet{chen2000propriety} proved that when the prior is flat, i.e.,
$Q=0$, the following two conditions are necessary and sufficient for
posterior propriety:
\begin{enumerate}
	\item[($C1$)] $X$ has full column rank;
	\item[($C2$)] There exists a vector $a=(a_1 \, a_2 \cdots a_n)^T \in
	\mathbb{R}^n$ such that $a_i > 0$ for all~$i$, and 
	$X_*^T a = 0$.
\end{enumerate}
Until further notice, we will assume that
the posterior is proper.

A\&C's algorithm to draw from the intractable posterior is based on
the following latent data model.  Given~$X$ and~$B$, let
$\{(Y_i,Z_i)\}_{i=1}^n$ be a sequence of independent random vectors
such that
\begin{align*}
& Y_i|Z_i, X, B \text{ is a point mass at } 1_{\mathbb{R}_+}(Z_i) \,
\\ & Z_i|X,B \sim N(X_i^TB, 1) \;.
\end{align*}
Clearly, under this hierarchical structure, $Y_i|X_i,B \sim
\mbox{Bernoulli}(\Phi(X_i^T B))$, which is consistent with the
original model.  Thus, if we let $\pi_{B,Z|Y,X}(\beta,z|Y,X)$ denote
the corresponding (augmented) posterior density (where $Z= (Z_1 \, Z_2
\cdots Z_n)^T$), then it's clear that
\[
\int_{\mathbb{R}^n} \pi_{B,Z|Y,X}(\beta,z|Y,X) \, \mbox{d$z$} =
\pi_{B|Y,X}(\beta|Y,X) \,,
\]
which is the target posterior from~\eqref{eq:post_I}.  Albert and Chib's
algorithm is simply a two-variable Gibbs sampler based on
$\pi_{B,Z|Y,X}(\beta,Z|Y,X)$.  Indeed, the Mtd, $k_{\mbox{\scriptsize{AC}}}:\mathbb{R}^p \times
\mathbb{R}^p \rightarrow \mathbb{R}_+$, is defined as
\begin{align*}
k_{\mbox{\scriptsize{AC}}}(\beta,\beta') &:=
k_{\mbox{\scriptsize{AC}}}(\beta,\beta';Y,X) \\
&= \int_{\mathbb{R}^n}
\pi_{B|Z,Y,X}(\beta'|z,Y,X) \pi_{Z|B,Y,X}(z|\beta,Y,X) \, \mbox{d$z$}
\; .
\end{align*}
As pointed out by \citet{albert1993bayesian},
\[
B|Z,Y,X \sim \mbox{N}_p \big( \Sigma^{-1} \big(X^TZ + Qv \big),
\Sigma^{-1} \big) \;,
\]
where, again, $\Sigma = X^TX + Q$.  Moreover, the density
$\pi_{Z|B,Y,X}(z|\beta,Y,X)$ is a product of $n$ univariate densities,
where
\[
Z_i|B,Y,X \sim \mbox{TN}(X_i^TB,1;Y_i).
\]
Obviously, these are the conditional densities that appear in the
algorithm described in the Introduction.

\subsection{A centered drift function}
\label{ssec:center}

\citet{roy2007convergence} and \citet{chakraborty2016convergence} both
used the drift function $V_0(\beta) = \norm{\Sigma^{1/2} \beta}^2$.
While this drift function is certainly amenable to analysis, it is not
``centered'' in any practical sense.  Indeed, $V_0(\beta)$ takes on
its minimum when $\beta=0$, but, in general, there is no reason to
expect A\&C's chain to make frequent visits to the vicinity of
the origin.  This heuristic is borne out by the result in
Section~\ref{sec:rp2} showing that $V_0$ \textit{cannot} lead to
stable d\&m in the large $n$, small $p$ regime.  As an alternative to
$V_0(\beta)$, we consider drift functions of the form
\begin{equation}
\label{eq:generic}
V(\beta) = \norm{M (\beta - \beta^*)}^2 \;,
\end{equation}
where $M=M(X,Y)$ is a matrix with $p$ columns, and
$\beta^*=\beta^*(X,Y)$ is a point in $\mathbb{R}^p$ that is
``attractive'' to A\&C's chain.  A candidate for $\beta^*$
would be the posterior mode~$\hat{B}$, which uniquely exists because of the well-known fact that
the posterior density $\pi_{B|Y,X}$ is log-concave. Setting $\beta^* = \hat{B}$ is, of course, not the only 
viable centering scheme, and any~$\beta^*$ in a close vicinity of~$\hat{B}$ would be equally effective. However, the following proposition shows that the posterior mode has a nice feature that will be exploited in the sequel.

\begin{proposition}
	\label{fixedmle}
	The posterior mode,~$\hat{B}$, satisfies the following equation,
	\begin{equation}
	\label{eq:fixed point}
	\int_{\mathbb{R}^p } \beta \, k_{\mbox{\scriptsize{AC}}}(\hat{B},\beta)
	\, \df \beta = \hat{B} \;.
	\end{equation}
\end{proposition}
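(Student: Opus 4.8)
The plan is to recognize the left-hand side of~\eqref{eq:fixed point} as the one-step conditional mean of A\&C's chain started at $\hat B$. By the description of the algorithm, $\int \beta\, k_{\mbox{\scriptsize{AC}}}(\hat B,\beta)\,\df\beta = E[B_{m+1}\mid B_m=\hat B]$, and conditioning on the latent vector $Z$ and then averaging over the (Gaussian) second step gives
\begin{equation*}
\int_{\mathbb{R}^p} \beta\, k_{\mbox{\scriptsize{AC}}}(\hat B,\beta)\,\df\beta = \Sigma^{-1}\big(X^T\, E[Z\mid \hat B] + Qv\big),
\end{equation*}
where $E[Z\mid\hat B]$ denotes the mean of the truncated-normal draw in Step~1 evaluated at $\beta=\hat B$. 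So it suffices to compute $E[Z\mid\hat B]$ and verify that the right-hand side collapses to $\hat B$.

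Second, I would compute the truncated-normal means componentwise. Writing $\theta_i = X_i^T\hat B$ and using the standard formula for the mean of a $\mbox{TN}(\theta_i,1;Y_i)$ variable,
\begin{equation*}
E[Z_i\mid \hat B] = \theta_i + g_i, \qquad g_i = \begin{cases} \phi(\theta_i)/\Phi(\theta_i), & Y_i=1,\\[2pt] -\phi(\theta_i)/\big(1-\Phi(\theta_i)\big), & Y_i=0, \end{cases}
\end{equation*}
where $\phi$ is the standard normal density. The key observation is that the adjustment $g_i$ is exactly the $i$th contribution to the score of the observed-data log-posterior: differentiating $\log\pi_{B|Y,X}$ from~\eqref{eq:post_I} gives $\nabla \log\pi_{B|Y,X}(\beta) = \sum_{i=1}^n g_i(\beta)\, X_i - Q(\beta-v)$, with $g_i(\beta)$ the same ratio of $\phi$ and $\Phi$ evaluated at $X_i^T\beta$. (Equivalently, this is Fisher's identity for the data-augmentation model: the observed-data score equals the conditional expectation of the complete-data score.)

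Third, since the posterior is proper and log-concave, $\hat B$ is the unique interior stationary point, so $\nabla\log\pi_{B|Y,X}(\hat B)=0$, which rearranges to $X^T g = Q(\hat B - v)$, where $g=(g_1 \cdots g_n)^T$. Substituting $E[Z\mid\hat B] = X\hat B + g$ into the display above yields
\begin{equation*}
\Sigma^{-1}\big(X^T X\hat B + X^T g + Qv\big) = \Sigma^{-1}\big(X^TX\hat B + Q\hat B\big) = \Sigma^{-1}\Sigma\,\hat B = \hat B,
\end{equation*}
using $X^T g = Q\hat B - Qv$ and $\Sigma = X^TX + Q$, which is the claim.

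The computation is mostly bookkeeping; the crux is the identification of the truncated-normal mean adjustment $g_i$ with the score contribution, after which the first-order condition does all the work. Conceptually, the cleanest framing is that the map $\beta \mapsto \Sigma^{-1}(X^T E[Z\mid\beta]+Qv)$ is precisely the EM operator for maximizing $\pi_{B|Y,X}$ (the E-step computes $E[Z\mid\beta]$, the M-step is the Gaussian maximization), so~\eqref{eq:fixed point} amounts to the statement that the mode is a fixed point of EM. The only point requiring genuine care is that $\hat B$ be an interior stationary point so that the gradient actually vanishes; this is guaranteed by posterior propriety together with the strict log-concavity of $\pi_{B|Y,X}$.
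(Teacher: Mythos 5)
Your proof is correct and follows essentially the same route as the paper: both identify the one-step mean $\Sigma^{-1}\big(X^T\mathbb{E}(Z|B=\hat{B},Y,X)+Qv\big)$, plug in the truncated-normal mean formulas from \eqref{eq:evtns}, and invoke the first-order condition $\nabla\log\pi_{B|Y,X}(\hat{B})=0$ together with $\Sigma = X^TX+Q$ to collapse the expression to $\hat{B}$. Your EM fixed-point framing is a pleasant interpretive gloss, but the underlying computation is identical to the paper's.
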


\begin{proof}
	$\hat{B}$ is the
	solution to the following equation,
	\[
	\frac{d}{d\beta} \left( \log\omega(\beta) + \log
	\pi_{Y|B,X}(Y|\beta,X) \right) = 0 \;.
	\]
	This implies that
	\begin{equation}
	\label{eq:fixedeqn2}
	\sum_{i=1}^{n} \left( \frac{\phi(X_i^T \hat{B})}{\Phi(X_i^T \hat{B})}
	1_{\{1\}}(Y_i) - \frac{\phi(X_i^T \hat{B})}{1 - \Phi(X_i^T \hat{B})}
	1_{\{0\}}(Y_i) \right) \, X_i - (Q \hat{B} - Qv) = 0 \;,
	\end{equation}
	where $\phi(\cdot)$ is the pdf of the standard normal distribution.
	On the other hand, it follows from \eqref{eq:evtns} in Subsection~\ref{app:tn} of the Appendix that
	\[
	\mathbb{E}(Z_i | B = \hat{B}, Y, X) = X_i^T \hat{B} +
	\frac{\phi(X_i^T \hat{B})}{\Phi(X_i^T \hat{B})} 1_{\{1\}}(Y_i) -
	\frac{\phi(X_i^T \hat{B})}{1 - \Phi(X_i^T \hat{B})} 1_{\{0\}}(Y_i) \;.
	\]
	This, along with~\eqref{eq:fixedeqn2}, implies that
	\begin{equation} \nonumber
	Qv + \sum_{i=1}^{n} X_i \mathbb{E}(Z_i | B = \hat{B}, Y, X) = \Sigma
	\hat{B} \;.
	\end{equation}
	But this is equivalent to
	\[
	\int_{\mathbb{R}^p} \int_{\mathbb{R}^n} \beta'
	\pi_{B|Z,Y,X}(\beta'|z,Y,X) \pi_{Z|B,Y,X}(z|\hat{B},Y,X) \, \df z \,
	\df \beta' = \hat{B} \;,
	\]
	which is precisely~\eqref{eq:fixed point}.
\end{proof}

\begin{remark}
	\label{rem:a&c-pm}
	We should emphasize that~\eqref{eq:fixed point}, while interesting, is not essential to the proofs of our main results. It merely simplifies the process of establishing a drift condition.
\end{remark}

We will consider two different versions of \eqref{eq:generic}, both
centered at $\hat{B}$.  The first, which will be used in the large
$n$-small $p$ regime, is simply a centered version of $V_0$ given by
\[
V_1(\beta) = \big\|\Sigma^{1/2} (\beta - \hat{B}) \big\|^2 \;.
\]
In the large $p$-small $n$ regime, we assume that $Q$ is positive
definite (which is necessary for posterior propriety) and that $X$ is
full row rank, and we use the following drift function
\[
V_2(\beta) = \Big\| \big( X \Sigma^{-1} X^T \big)^{-1/2} X(\beta-\hat{B})
\Big\|^2 \;.
\]
In the next section, we establish two sets of d\&m conditions for the
A\&C chain based on $V_1$ and $V_2$.

\section{Results for the Albert and Chib Chain Part I: Fixed $n$ and $p$}
\label{sec:rp1}

\subsection{Drift inequalities for $V_1$ and $V_2$}
\label{ssec:v1&v2}

Define $g: \mathbb{R} \rightarrow \mathbb{R}$ as
\[
g(\theta) = \frac{\theta\phi(\theta)}{\Phi(\theta)} +
\left(\frac{\phi(\theta)}{\Phi(\theta)}\right)^2 \;.
\]
For any $\beta \in \mathbb{R}^p$, let $D(\beta)$ denote an $n \times
n$ diagonal matrix with $i$th diagonal element
\[
1 - g \big( X_i^T \beta \big) 1_{\{1\}}(Y_i) - g \big( \! -X_i^T \beta
\big) 1_{\{0\}}(Y_i) \;.
\]

\begin{lemma}
	\label{lem:drifts}
	If $V(\beta) = \norm{M (\beta - \hat{B})}^2, \, \beta \in \mathbb{R}^p$, where $M$ is any matrix
	with $p$ columns, then
	\begin{align*}
	\int_{\mathbb{R}^p} &V(\beta') k_{\mbox{\scriptsize{AC}}}(\beta,\beta') \, \df \beta'\\
	&\leq \sup_{t \in (0,1)} \big \| M \Sigma^{-1} X^T D(\hat{B} + t(\beta -
	\hat{B})) X (\beta - \hat{B}) \big \|^2 + 2 \, \mathrm{tr} \Big\{ M
	\Sigma^{-1} M^T \Big\} \;.
	\end{align*}
\end{lemma}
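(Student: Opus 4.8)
The plan is to evaluate the left-hand side exactly by first conditioning on the latent vector $Z$, and then to split the result into a ``mean'' part and a ``variance'' part. Writing $\mu(Z) = \Sigma^{-1}(X^TZ + Qv)$ for the conditional mean appearing in Step~2 of the algorithm, the conditional law of $B'$ given $Z$ is $\mathrm{N}_p(\mu(Z),\Sigma^{-1})$. Since $V(\beta') = \|M(\beta'-\hat{B})\|^2$ is quadratic, the standard identity for the expected squared norm of a Gaussian gives $\mathbb{E}[V(B')\mid Z] = \|M(\mu(Z)-\hat{B})\|^2 + \mathrm{tr}(M\Sigma^{-1}M^T)$, the second term being the contribution of the sampling variance of $B'$. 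Taking the tower expectation over $Z\mid B=\beta$ reduces the problem to bounding $\mathbb{E}\big[\|M(\mu(Z)-\hat{B})\|^2\mid B=\beta\big]$.

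Next I would linearize $\mu(Z)-\hat{B}$. Let $m(\beta)$ denote the $n$-vector whose $i$th entry is $\mathbb{E}(Z_i\mid B=\beta,Y,X)$, the truncated-normal mean recorded in the proof of Proposition~\ref{fixedmle}. The fixed-point identity established there, namely $\Sigma\hat{B} = X^Tm(\hat{B}) + Qv$, lets me write $\mu(Z)-\hat{B} = \Sigma^{-1}X^T\big(Z - m(\hat{B})\big)$, which eliminates the prior term $Qv$. I would then decompose $Z - m(\hat{B}) = (Z - m(\beta)) + (m(\beta) - m(\hat{B}))$; since $\mathbb{E}[Z\mid B=\beta] = m(\beta)$, the cross term vanishes and I obtain
\[
\mathbb{E}\big[\|M(\mu(Z)-\hat{B})\|^2\mid B=\beta\big] = \big\|M\Sigma^{-1}X^T(m(\beta)-m(\hat{B}))\big\|^2 + \mathrm{tr}\big(M\Sigma^{-1}X^T\,\mathrm{Cov}(Z\mid B=\beta)\,X\Sigma^{-1}M^T\big).
\]
Because the $Z_i$ are conditionally independent, $\mathrm{Cov}(Z\mid B=\beta)$ is diagonal, and a direct computation of the variance of a shifted standard normal truncated to a half-line shows that its $i$th diagonal entry is exactly $1 - g(X_i^T\beta)1_{\{1\}}(Y_i) - g(-X_i^T\beta)1_{\{0\}}(Y_i)$, i.e.\ $\mathrm{Cov}(Z\mid B=\beta) = D(\beta)$.

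For the trace term I would invoke the two elementary dominations $0\preceq D(\beta)\preceq I$ (each diagonal entry is a variance of a truncated standard normal, hence lies in $[0,1]$) and $X^TX = \Sigma - Q \preceq \Sigma$ (as $Q\succeq 0$), which together give $\mathrm{tr}(M\Sigma^{-1}X^TD(\beta)X\Sigma^{-1}M^T) \le \mathrm{tr}(M\Sigma^{-1}X^TX\Sigma^{-1}M^T) \le \mathrm{tr}(M\Sigma^{-1}M^T)$. Adding the $\mathrm{tr}(M\Sigma^{-1}M^T)$ already produced in the first paragraph yields the claimed $2\,\mathrm{tr}(M\Sigma^{-1}M^T)$.

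Finally, for the mean term I would express $m(\beta)-m(\hat{B})$ through the Jacobian of $m$. Differentiating the truncated-normal mean and using $\frac{d}{d\theta}\frac{\phi(\theta)}{\Phi(\theta)} = -g(\theta)$ gives $\partial m_i/\partial\beta^T = D_{ii}(\beta)X_i^T$, so the Jacobian of $m$ is precisely $D(\beta)X$. The fundamental theorem of calculus along the segment from $\hat{B}$ to $\beta$ then gives $m(\beta)-m(\hat{B}) = \int_0^1 D(\hat{B}+t(\beta-\hat{B}))X(\beta-\hat{B})\,dt$, and applying Jensen's inequality to the convex map $\|\cdot\|^2$ against the uniform measure in $t$ bounds $\|M\Sigma^{-1}X^T(m(\beta)-m(\hat{B}))\|^2$ by $\sup_{t\in(0,1)}\|M\Sigma^{-1}X^TD(\hat{B}+t(\beta-\hat{B}))X(\beta-\hat{B})\|^2$, which is the first term of the asserted bound. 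I expect this last step---identifying the Jacobian of $m$ with $D(\beta)X$ via the identity $\frac{d}{d\theta}[\theta + \phi(\theta)/\Phi(\theta)] = 1 - g(\theta)$ (and its $Y_i=0$ analogue)---to be the crux of the argument, since it is what couples the geometry of the drift to the matrix $D$ and makes both the mean term and the variance term collapse to the stated form.
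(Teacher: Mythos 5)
Your proposal is correct and follows essentially the same route as the paper's proof: the Gaussian quadratic identity conditional on $Z$, the bias--variance decomposition over $Z \mid B = \beta$, the fixed-point property of $\hat{B}$ from Proposition~\ref{fixedmle} to kill the $Qv$ term, the bounds $D(\beta) \preceq I_n$ and $X^TX \preceq \Sigma$ for the trace, and the identification of $D(\beta)X$ as the Jacobian of $\beta \mapsto \mathbb{E}(Z \mid B=\beta,Y,X)$. The only cosmetic difference is that you finish with the fundamental theorem of calculus plus Jensen's inequality where the paper invokes the mean value theorem for vector-valued functions \citep[Theorem 5.19]{rudin1976principles}; these yield the same $\sup_{t\in(0,1)}$ bound.
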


\begin{proof}
	Note that
	\begin{align*}
	\int_{\mathbb{R}^p} V(\beta') \, \pi_{B|Z,Y,X}&(\beta'|z,Y,X) \, \df
	\beta' \\
	&= \left\| M \Sigma^{-1} (X^Tz + Qv) - M\hat{B} \right\|^2 +
	\mathrm{tr} \left\{ M \Sigma^{-1} M^T \right\} \;.
	\end{align*}
	Moreover,
	\begin{align}
	\label{eq:need1}
	\int_{\mathbb{R}^n}  \|M \Sigma^{-1} &(X^Tz + Qv)  - M\hat{B}\|^2 \,
	\pi_{Z|B,Y,X}(z|\beta,Y,X) \,\df z \nonumber \\ & = \left\|M
	\Sigma^{-1} \left\{ X^T \mathbb{E}\left( Z|B=\beta,Y,X \right) + Qv
	\right\} - M \hat{B} \right\|^2 \nonumber \\ & \hspace{15mm} +
	\mathrm{tr} \left\{ M \Sigma^{-1} X^T \mathrm{var} \left(
	Z|B=\beta,Y,X \right) X \Sigma^{-1} M^T \right\} \;.
	\end{align}
	For two symmetric matrices of the same size, $M_1$ and $M_2$, we write
	$M_1 \leq M_2$ if $M_2 - M_1$ is non-negative definite.  By
	Lemma~\ref{lem:horrace} in Subsection~\ref{app:tn} of the Appendix,
	$\mathrm{var}(Z|B=\beta,Y,X) \leq I_n$.  It follows that
	\begin{equation}
	\label{eq:need2}
	M \Sigma^{-1}X^T \mathrm{var}\left( Z|B=\beta,Y,X \right) X
	\Sigma^{-1}M^T \leq M \Sigma^{-1} X^T X \Sigma^{-1} M^T \leq M
	\Sigma^{-1} M^T \;.
	\end{equation}
	Therefore,
	\begin{equation}\label{eq:drift-gen-2}
	\begin{aligned}
	\int_{\mathbb{R}^p} &V(\beta') k_{\mbox{\scriptsize{AC}}}(\beta,\beta') \, \df \beta' \\
	&=
	\int_{\mathbb{R}^p} V(\beta') \int_{\mathbb{R}^n}
	\pi_{B|Z,Y,X}(\beta'|z,Y,X) \pi_{Z|B,Y,X}(z|\beta,Y,X) \, \df z \,
	\df \beta'  \\ & \leq \left\| M \Sigma^{-1} \left\{ X^T
	\mathbb{E}\left( Z|B=\beta,Y,X \right) + Qv \right\} - M \hat{B}
	\right\|^2 + 2 \, \mathrm{tr} \left\{ M \Sigma^{-1} M^T \right\} \;.
	\end{aligned}
	\end{equation}
	Now, for $\alpha \in \mathbb{R}^p$, define
	\[
	\mu(\alpha) = M \Sigma^{-1} \left\{ X^T \mathbb{E}\left( Z|B=\hat{B} +
	\alpha,Y,X \right) + Qv \right\} - M \hat{B} \;.
	\]
	By Proposition~\ref{fixedmle}, we have
	\[
	\mu(0) = M \int_{\mathbb{R}^p} \beta k_{\mbox{\scriptsize{AC}}}(\hat{B},\beta) \, \df\beta
	- M\hat{B} = 0 \;.
	\]
	By the mean value theorem for vector-valued functions \citep[see,
	e.g.,][Theorem 5.19]{rudin1976principles}, for any $\alpha \in
	\mathbb{R}^p$,
	\begin{equation*} 
	\norm{\mu(\alpha)}^2 \leq \Bigg( \sup_{t \in(0,1)}\left\|
	\frac{\partial\mu(t\alpha)}{\partial t} \right\| \Bigg)^2 \;.
	\end{equation*} 
	Now, by results on truncated normal distributions in Subsection~\ref{app:tn} of the Appendix,
	\begin{align*}
	\frac{\partial \mu(t\alpha)}{\partial t} &= M \Sigma^{-1}
	\sum_{i=1}^{n} X_i \frac{\partial}{\partial t} \mathbb{E}(Z_i|B =
	\hat{B} + t\alpha, Y,X) \\ &= M \Sigma^{-1} \sum_{i=1}^{n} X_i
	\frac{\partial}{\partial t} \left\{ X_i^T(\hat{B} + t\alpha) \right\}
	\\ & \hspace{15mm} \times \frac{d}{d\theta} \left( \theta +
	1_{\{1\}}(Y_i) \frac{\phi(\theta)}{\Phi(\theta)} - 1_{\{0\}}(Y_i)
	\frac{\phi(\theta)}{1 - \Phi(\theta)} \right) \bigg|_{\theta =
		X_i^T(\hat{B} + t\alpha)} \\ &= M \Sigma^{-1} X^T \, D(\hat{B} + t\alpha)
	\, X \alpha \;.
	\end{align*}
	Hence,
	\begin{equation} \label{eq:meanvalue}
	\|\mu(\alpha)\|^2 \leq \sup_{t \in (0,1)} \big \| M
	\Sigma^{-1} X^T D(\hat{B} + t\alpha) X \alpha \big \|^2 \;.
	\end{equation}
	The result then follows from~\eqref{eq:drift-gen-2}
	and~\eqref{eq:meanvalue} by taking $\alpha = \beta-\hat{B}$.
\end{proof}

We now use Lemma~\ref{lem:drifts} to establish explicit drift
inequalities for $V_1$ and $V_2$.  We begin with $V_1$.  

\begin{proposition} \label{prop:V_1}
	For all $\beta \in \mathbb{R}^p$, we have
	\begin{align*}
	\int_{\mathbb{R}^p} V_1(\beta') &k_{\mbox{\scriptsize{AC}}}(\beta,\beta') \, \df
	\beta' \\
	&\leq \bigg( \sup_{t \in (0,1)} \sup_{\alpha \neq 0}
	\frac{\|\Sigma^{-1/2}X^TD(\hat{B} + t\alpha)X \alpha
		\|^2}{\|\Sigma^{1/2}\alpha\|^2} \bigg) V_1(\beta) + 2p \;.
	\end{align*}
\end{proposition}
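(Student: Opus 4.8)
The plan is to specialize Lemma~\ref{lem:drifts} to the choice $M = \Sigma^{1/2}$, which is precisely the matrix defining $V_1$, and then massage the resulting bound into the stated form. Since $\Sigma = X^TX + Q$ is symmetric and positive definite, $\Sigma^{1/2}$ is well defined, symmetric, and invertible, so $V_1(\beta) = \norm{\Sigma^{1/2}(\beta - \hat{B})}^2$ is indeed an instance of the generic drift function $\norm{M(\beta - \hat{B})}^2$ covered by the lemma.

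With $M = \Sigma^{1/2}$, two of the matrix expressions in the conclusion of Lemma~\ref{lem:drifts} simplify immediately. First, $M \Sigma^{-1} X^T = \Sigma^{1/2}\Sigma^{-1}X^T = \Sigma^{-1/2}X^T$, so the drift term becomes $\sup_{t \in (0,1)} \norm{\Sigma^{-1/2} X^T D(\hat{B} + t(\beta - \hat{B})) X (\beta - \hat{B})}^2$, matching the numerator in the statement. Second, using symmetry of $\Sigma^{1/2}$, we get $M \Sigma^{-1} M^T = \Sigma^{1/2}\Sigma^{-1}\Sigma^{1/2} = I_p$, whence $\mathrm{tr}\{M\Sigma^{-1}M^T\} = p$ and the additive constant is exactly $2p$.

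It remains to convert the supremum-over-$t$ term into a multiple of $V_1(\beta)$. Writing $\alpha = \beta - \hat{B}$, I would factor $\norm{\Sigma^{1/2}\alpha}^2 = V_1(\beta)$ out of the norm and bound the resulting ratio by its worst case over all directions: for $\alpha \ne 0$,
\begin{align*}
\sup_{t \in (0,1)} \norm{\Sigma^{-1/2} X^T D(\hat{B} + t\alpha) X \alpha}^2 &= V_1(\beta) \sup_{t \in (0,1)} \frac{\norm{\Sigma^{-1/2} X^T D(\hat{B} + t\alpha) X \alpha}^2}{\norm{\Sigma^{1/2}\alpha}^2} \\
&\leq V_1(\beta) \sup_{t \in (0,1)} \sup_{\alpha' \ne 0} \frac{\norm{\Sigma^{-1/2} X^T D(\hat{B} + t\alpha') X \alpha'}^2}{\norm{\Sigma^{1/2}\alpha'}^2} \,,
\end{align*}
which supplies exactly the coefficient appearing in the proposition. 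The degenerate case $\alpha = 0$ (i.e.\ $\beta = \hat{B}$) must be checked separately, but there $V_1(\beta) = 0$ and $X\alpha = 0$, so the drift term vanishes and the inequality holds trivially with only the $2p$ term surviving on the right.

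I do not anticipate a genuine obstacle: once $M = \Sigma^{1/2}$ is substituted into Lemma~\ref{lem:drifts}, the proof is a short computation. The only point requiring a little care is the passage from the $t$-supremum at the single fixed direction $\alpha = \beta - \hat{B}$ to the double supremum over all directions $\alpha'$, a Rayleigh-quotient-type relaxation that is what makes the multiplicative coefficient independent of $\beta$ and thus yields a genuine (global) drift inequality.
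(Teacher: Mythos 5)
Your proof is correct and follows essentially the same route as the paper's: both specialize Lemma~\ref{lem:drifts} to $M = \Sigma^{1/2}$ (so that $M\Sigma^{-1}X^T = \Sigma^{-1/2}X^T$ and $\mathrm{tr}\{M\Sigma^{-1}M^T\} = p$) and then bound the $t$-supremum at the fixed direction $\beta - \hat{B}$ by the double supremum over all nonzero directions, yielding a coefficient independent of $\beta$. The only difference is that you make explicit the matrix simplifications and the degenerate case $\beta = \hat{B}$, which the paper leaves implicit.
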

\begin{proof}
	Taking $M = \Sigma^{1/2}$ in Lemma~\ref{lem:drifts} yields
	\begin{align*}
	\int_{\mathbb{R}^p} V_1(\beta') & k_{\mbox{\scriptsize{AC}}}(\beta,\beta') \, \df
	\beta' \\
	& \leq \sup_{t \in (0,1)} \big \| \Sigma^{-1/2} X^T
	D(\hat{B} + t(\beta - \hat{B})) X (\beta - \hat{B}) \big\|^2 + 2p \\ & \le
	\bigg( \sup_{t \in (0,1)} \sup_{\alpha \neq 0}
	\frac{\|\Sigma^{-1/2}X^TD(\hat{B} + t\alpha)X \alpha
		\|^2}{\|\Sigma^{1/2}\alpha\|^2} \bigg) V_1(\beta) + 2p \;.
	\end{align*}
\end{proof}
\noindent In Subsection~\ref{app:lambda<1} of the Appendix, we prove that 
\[
\sup_{t \in (0,1)} \sup_{\alpha \neq 0} \frac{\|\Sigma^{-1/2}X^TD(\hat{B}
	+ t\alpha)X \alpha \|^2}{\|\Sigma^{1/2}\alpha\|^2} < 1 \;.
\]

For a symmetric matrix $M$, let $\lambda_{\min}(M)$ and
$\lambda_{\max}(M)$ denote the smallest and largest eigenvalues of
$M$, respectively.  Here is the analogue of Proposition~\ref{prop:V_1}
for $V_2$.

\begin{proposition} 
	\label{prop:V_2}
	Assume that $X$ has full row rank.  Then for all $\beta \in
	\mathbb{R}^p$, we have
	\begin{equation*}
	\int_{\mathbb{R}^p} V_2(\beta') k_{\mbox{\scriptsize{AC}}}(\beta,\beta') \, \df \beta'
	\leq \left\{ \lambda^2_{\max} \left( X \Sigma^{-1} X^T \right) \right\}
	V_2(\beta) + 2n \;.
	\end{equation*}
\end{proposition}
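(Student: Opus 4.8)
The plan is to invoke Lemma~\ref{lem:drifts} with the choice $M = (X\Sigma^{-1}X^T)^{-1/2}X$, so that $\norm{M(\beta-\hat{B})}^2 = V_2(\beta)$. Throughout I would write $A := X\Sigma^{-1}X^T$, which is an $n\times n$ positive definite matrix because $X$ has full row rank and $\Sigma^{-1}$ is positive definite; hence $A^{1/2}$ and $A^{-1/2}$ are well defined. Lemma~\ref{lem:drifts} then bounds the left-hand side by a drift term $\sup_{t\in(0,1)}\norm{M\Sigma^{-1}X^T D(\hat{B} + t(\beta-\hat{B}))X(\beta-\hat{B})}^2$ plus the additive constant $2\,\mathrm{tr}\{M\Sigma^{-1}M^T\}$, and I would handle these two pieces separately.

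The constant is immediate: $M\Sigma^{-1}M^T = A^{-1/2}X\Sigma^{-1}X^T A^{-1/2} = A^{-1/2}AA^{-1/2} = I_n$, so $2\,\mathrm{tr}\{M\Sigma^{-1}M^T\} = 2n$, which is exactly the additive constant in the statement. For the drift term, the key simplification is the analogous telescoping $M\Sigma^{-1}X^T = A^{-1/2}X\Sigma^{-1}X^T = A^{-1/2}A = A^{1/2}$. Writing $w := X(\beta-\hat{B})$ and $D_t := D(\hat{B} + t(\beta-\hat{B}))$, the drift term becomes $\sup_{t\in(0,1)}\norm{A^{1/2}D_t w}^2$, while $V_2(\beta) = \norm{A^{-1/2}w}^2 = w^T A^{-1} w$.

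It then remains to bound $\norm{A^{1/2}D_t w}^2$ by $\lambda^2_{\max}(A)\,w^T A^{-1}w$ uniformly in $t$. First I would observe that the diagonal entries of $D(\cdot)$ coincide with the conditional variances $\mathrm{var}(Z_i\,|\,B=\cdot,Y,X)$ (for a location family, the derivative of the truncated-normal mean computed inside the proof of Lemma~\ref{lem:drifts} equals its variance), so $0 \le D_t \le I_n$ in the Loewner order by Lemma~\ref{lem:horrace} together with the nonnegativity of variances; in particular $\norm{D_t w} \le \norm{w}$. The required estimate then follows from the chain $\norm{A^{1/2}D_t w}^2 = (D_t w)^T A(D_t w) \le \lambda_{\max}(A)\norm{D_t w}^2 \le \lambda_{\max}(A)\norm{w}^2$, followed by $\norm{w}^2 = w^T w \le \lambda_{\max}(A)\,w^T A^{-1}w$ (since $A^{-1} \ge \lambda_{\max}(A)^{-1}I_n$), which together give $\norm{A^{1/2}D_t w}^2 \le \lambda^2_{\max}(A)\,V_2(\beta)$ for every $t$. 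Combining this with the constant $2n$ yields the claim.

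The point requiring the most care — rather than any deep obstacle — is the identification of the diagonal entries of $D$ as truncated-normal variances, since this is precisely what licenses $D_t \le I_n$; everything else is bookkeeping built on the two factorizations $M\Sigma^{-1}X^T = A^{1/2}$ and $M\Sigma^{-1}M^T = I_n$. It is worth noting that the two separate applications of $\lambda_{\max}(A)$ (once to peel off $A$, once to pass from $\norm{w}^2$ to $w^T A^{-1}w$) are exactly what produce the squared eigenvalue $\lambda^2_{\max}(X\Sigma^{-1}X^T)$ in the drift coefficient, and that both steps are genuinely lossy, so this coefficient should not be expected to be tight.
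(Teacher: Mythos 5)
Your proposal is correct and follows essentially the same route as the paper: both take $M = (X\Sigma^{-1}X^T)^{-1/2}X$ in Lemma~\ref{lem:drifts}, use the telescoping identities to get the constant $2n$ and the drift term $\sup_t \| (X\Sigma^{-1}X^T)^{1/2} D_t\, X(\beta-\hat{B})\|^2$, and invoke Lemma~\ref{lem:horrace} to bound $D_t$ by $I_n$. The only difference is trivial bookkeeping at the end: the paper factors $w = A^{1/2}(A^{-1/2}w)$ and bounds $\lambda_{\max}^2\bigl(A^{1/2}D_tA^{1/2}\bigr) \le \lambda_{\max}^2(A)$ in one step, whereas you apply $\lambda_{\max}(A)$ twice (once to peel off $A$, once via $A^{-1} \ge \lambda_{\max}(A)^{-1}I_n$), which yields the identical bound.
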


\begin{proof}
	Taking $M = \big( X \Sigma^{-1} X^T \big)^{-1/2} X$ in
	Lemma~\ref{lem:drifts} and applying Lemma~\ref{lem:horrace} yields
	\begin{align*}
	&\int_{\mathbb{R}^p} V_2(\beta') k_{\mbox{\scriptsize{AC}}}(\beta,\beta') \, \df \beta'\\ &
	\leq \sup_{t \in (0,1)} \big \| \big( X \Sigma^{-1} X^T
	\big)^{1/2} D(\beta + t(\beta - \hat{B})) X (\beta - \hat{B}) \big\|^2 + 2n \\ 
	& \le  \sup_{t \in (0,1)} \lambda_{\max}^2 \big\{\big( X \Sigma^{-1} X^T
	\big)^{1/2} D(\hat{B} + t(\beta - \hat{B})) \big( X \Sigma^{-1} X^T
	\big)^{1/2} \big\} V_2(\beta) + 2n \\
	& \le
	\left\{ \lambda^2_{\max} \left( X \Sigma^{-1} X^T \right) \right\}
	V_2(\beta) + 2n \;.
	\end{align*}
\end{proof}

\subsection{Drift and minorization for the Albert and Chib chain based on $V_1$}
\label{ssec:V_1}

In this subsection, we exploit the flipped chain idea described in
Section~\ref{sec:d_m}.  In particular, $V_1$ is used to establish
d\&m conditions for a flipped chain that has the
same geometric convergence rate as A\&C's chain.
Later, in Section~\ref{sec:rp2}, we will use these results to prove
asymptotic stability as $n \to \infty$.

Note that $\pi_{B|Z,Y,X}(\beta|Z,Y,X)$ depends on the $n$-dimensional
vector~$Z$ only through $X^TZ$, which is a one-to-one function of
the following $p$-dimensional vector:
\[
\Gamma := \Sigma^{1/2} \left\{ \Sigma^{-1} \left(X^TZ + Qv\right) -
\hat{B} \right\} \;.
\] 
Hence, we can represent the Mtd of the A\&C chain as follows:
\[
k_{\mbox{\scriptsize{AC}}}(\beta,\beta') = \int_{\mathbb{R}^p}
\pi_{B|\Gamma,Y,X}(\beta'|\gamma,Y,X)
\pi_{\Gamma|B,Y,X}(\gamma|\beta,Y,X) \, \df \gamma \;.
\]
Recalling the discussion in Section~\ref{sec:d_m}, this maneuver seems
to represent progress since we have replaced $n$ with $p$.  However,
it is difficult to establish a minorization condition using this
version of $k_{\mbox{\scriptsize{AC}}}(\beta,\beta')$ because
$\pi_{\Gamma|B,Y,X}(\gamma|\beta,Y,X)$ lacks a closed form.  On the
other hand, this chain has the same geometric convergence rate as the
flipped chain defined by the following Mtd:
\begin{align*}
\tilde{k}_{\mbox{\scriptsize{AC}}}(\gamma,\gamma') &:= \tilde{k}_{\mbox{\scriptsize{AC}}}(\gamma,\gamma'; Y, X)\\
&= \int_{\mathbb{R}^p} \pi_{\Gamma|B,Y,X} (\gamma'|\beta,Y,X)
\pi_{B|\Gamma,Y,X}(\beta|\gamma,Y,X) \, \df \beta \;.
\end{align*}
Constructing a minorization condition for this Mtd is much less
daunting since
\[
B|\Gamma,Y,X \sim N \big( \Sigma^{-1/2}\Gamma + \hat{B}, \Sigma^{-1} \big)
\;.
\]
Here is the main result of this subsection.

\begin{proposition}
	\label{prop:driftmin-n}
	Let $\tilde{V}_1(\gamma) = \norm{\gamma}^2$.  The Mtd
	$\tilde{k}_{\mbox{\scriptsize{AC}}}$ satisfies the drift condition
	\[
	\int_{\mathbb{R}^p} \tilde{V}_1(\gamma')
	\tilde{k}_{\mbox{\scriptsize{AC}}}(\gamma,\gamma') \, \df \gamma' \leq \lambda
	\tilde{V}_1(\gamma) + L \;,
	\]
	where $L = p(1+\lambda)$, and
	\[
	\lambda = \sup_{t \in (0,1)} \sup_{\alpha \neq 0} \frac{\|\Sigma^{-1/2}X^TD(\hat{B} + t\alpha)X \alpha \|^2}{\|\Sigma^{1/2}\alpha\|^2}  \;.
	\]
	Moreover, for $d > 2L/(1-\lambda)$, $\tilde{k}_{\mbox{\scriptsize{AC}}}$ satisfies
	\[
	\tilde{k}_{\mbox{\scriptsize{AC}}}(\gamma,\gamma') \geq \varepsilon q(\gamma') \;,
	\]
	where $q: \mathbb{R}^p \rightarrow [0,\infty)$ is a pdf, and
	$\varepsilon = 2^{-p/2} e^{-d}$.
\end{proposition}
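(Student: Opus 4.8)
The plan is to treat the two claims separately, obtaining the drift condition as a direct application of Proposition~\ref{prop:bad_ref} and the minorization by exploiting the fact that, in the flipped representation, the \emph{tractable} Gaussian factor $\tilde{s}(\beta|\gamma) := \pi_{B|\Gamma,Y,X}(\beta|\gamma,Y,X)$ is the one we must lower bound. Recall that $B|\Gamma,Y,X \sim N(\Sigma^{-1/2}\Gamma + \hat{B}, \Sigma^{-1})$ is available in closed form, whereas $\pi_{\Gamma|B,Y,X}$ is not; this is precisely the situation that Proposition~\ref{prop:bad_ref} and the flipped-chain minorization recipe of Section~\ref{sec:d_m} were designed to handle.

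For the drift condition, I would start from the inequality of Proposition~\ref{prop:V_1}, which gives $\int_{\mathbb{R}^p} V_1(\beta')k_{\mbox{\scriptsize{AC}}}(\beta,\beta')\,\df\beta' \le \lambda V_1(\beta) + 2p$ with $\lambda$ as in the statement and original $L = 2p$. Applying Proposition~\ref{prop:bad_ref} with $V = V_1$ then produces a drift for $\tilde{k}_{\mbox{\scriptsize{AC}}}$ governed by the centered function $\tilde{V}(\gamma) = \int_{\mathbb{R}^p} V_1(\beta)\tilde{s}(\beta|\gamma)\,\df\beta + c$. The key computation is that under $\tilde{s}(\cdot|\gamma)$ the vector $\Sigma^{1/2}(\beta - \hat{B})$ is $N(\gamma, I_p)$, so that $\int V_1(\beta)\tilde{s}(\beta|\gamma)\,\df\beta = \norm{\gamma}^2 + p$. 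Choosing the free constant $c = -p$ (which preserves nonnegativity of $\tilde{V}$) collapses this to $\tilde{V}(\gamma) = \norm{\gamma}^2 = \tilde{V}_1(\gamma)$, and the proposition's bookkeeping gives $\tilde{L} = L + c(1-\lambda) = 2p - p(1-\lambda) = p(1+\lambda)$, exactly as claimed.

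For the minorization, I would invoke the recipe from Section~\ref{sec:d_m}: it suffices to establish $\tilde{s}(\beta|\gamma) \ge \tilde{\varepsilon}\,\tilde{\nu}(\beta)$ uniformly over the small set $\{\gamma : \tilde{V}_1(\gamma) < d\}$, since then $\tilde{k}_{\mbox{\scriptsize{AC}}}(\gamma,\gamma') \ge \tilde{\varepsilon}\int_{\mathbb{R}^p}\pi_{\Gamma|B,Y,X}(\gamma'|\beta,Y,X)\,\tilde{\nu}(\beta)\,\df\beta =: \varepsilon\, q(\gamma')$, with $q$ automatically a pdf. Writing the Gaussian density explicitly, its exponent is $-\tfrac12\norm{\Sigma^{1/2}(\beta-\hat{B}) - \gamma}^2$; the elementary inequality $\norm{a-b}^2 \le 2\norm{a}^2 + 2\norm{b}^2$ together with $\norm{\gamma}^2 < d$ yields the $\gamma$-free lower bound $\tilde{s}(\beta|\gamma) \ge e^{-d}\,\frac{|\Sigma|^{1/2}}{(2\pi)^{p/2}}\exp\{-\norm{\Sigma^{1/2}(\beta-\hat{B})}^2\}$. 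Recognizing the right-hand side as $2^{-p/2}e^{-d}$ times the $N(\hat{B}, \tfrac12\Sigma^{-1})$ density identifies $\tilde{\nu}$ as that density and $\varepsilon = \tilde{\varepsilon} = 2^{-p/2}e^{-d}$.

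The individual computations are routine; the only genuine subtlety—and the point the flipped representation resolves—is that a direct minorization of $k_{\mbox{\scriptsize{AC}}}$ would require controlling the intractable $\pi_{\Gamma|B,Y,X}$, whereas flipping moves all of the work onto the Gaussian $\tilde{s}$. I expect the one place that demands care is ensuring the lower bound on $\tilde{s}(\beta|\gamma)$ is genuinely uniform in $\gamma$ across the small set: the factor $2^{-p/2}$, arising from the doubling of the precision when the quadratic is split, is precisely the price paid for this uniformity. A secondary check is confirming that $c = -p$ keeps $\tilde{V}_1$ nonnegative so that Proposition~\ref{prop:bad_ref} genuinely applies.
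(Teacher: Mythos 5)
Your proposal is correct and follows essentially the same route as the paper: the drift is obtained by applying Proposition~\ref{prop:bad_ref} with $c=-p$ to the inequality of Proposition~\ref{prop:V_1} (using that $\Sigma^{1/2}(\beta-\hat{B}) \sim N(\gamma, I_p)$ under $\tilde{s}(\cdot|\gamma)$, so the integrated drift function is $\|\gamma\|^2 + p$), and the minorization lower-bounds the tractable Gaussian $\tilde{s}$ over the small set via the doubling inequality $\|a-b\|^2 \le 2\|a\|^2 + 2\|b\|^2$, yielding $\varepsilon = 2^{-p/2}e^{-d}$ exactly as in the paper. The only cosmetic difference is that you exhibit the minorizing density explicitly as the $N\big(\hat{B}, \tfrac{1}{2}\Sigma^{-1}\big)$ density, whereas the paper takes the normalized pointwise infimum of $\tilde{s}(\beta|\gamma)$ over the small set and bounds its normalizing constant by the same inequality---your version is marginally more explicit but mathematically equivalent.
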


\begin{proof}
	We begin with the drift.  It's easy to verify that
	\[
	\tilde{V}_1(\gamma) = \int_{\mathbb{R}^p} V_1(\beta) \pi_{B|\Gamma,Y,X}(\beta|\gamma,Y,X) \,
	\df \beta - p \;.
	\]
	We now use the techniques described at the end of
	Section~\ref{sec:d_m} to convert the drift inequality in
	Proposition~\ref{prop:V_1} into a drift inequality for the flipped
	chain.  We know that
	\[
	\int_{\mathbb{R}^p} V_1(\beta') k_{\mbox{\scriptsize{AC}}}(\beta,\beta') \, \df \beta' \leq \lambda V_1(\beta) + 2p \;.
	\]
	Then taking $c = -p$ in Proposition~\ref{prop:bad_ref} yields
	\[
	\int_{\mathbb{R}^p} \tilde{V}_1(\gamma')
	\tilde{k}_{\mbox{\scriptsize{AC}}}(\gamma,\gamma') \, \df \gamma' \leq \lambda
	\tilde{V}_1(\gamma) + p(1+\lambda) \;.
	\]
	As explained in Section~\ref{sec:d_m}, to establish the minorization
	condition, it suffices to show that there exists a pdf $\nu(\beta) :=
	\nu(\beta|Y,X)$ such that
	\begin{equation}
	\label{eq:n-min-flipped-pre}
	\pi_{B|\Gamma,Y,X}(\beta|\gamma,Y,X) \geq \varepsilon \nu(\beta)
	\end{equation}
	whenever $\tilde{V}_1(\gamma) \leq d$.  Recall that
	\[
	B|\Gamma,Y,X \sim N \big( \Sigma^{-1/2}\Gamma + \hat{B}, \Sigma^{-1}
	\big) \;.
	\]
	Define 
	\begin{align*}
	\nu_1(\beta) & = \inf_{\gamma:\tilde{V}_1(\gamma) \leq d}
	\pi_{B|\Gamma,Y,X}(\beta|\gamma,Y,X) \\ & =
	\inf_{\gamma:\tilde{V}_1(\gamma) \leq d}
	\frac{|\Sigma|^{1/2}}{(2\pi)^{p/2}} \exp\left\{ -\frac{1}{2} \left\|
	\Sigma^{1/2} \left( \beta - \hat{B} - \Sigma^{-1/2}\gamma \right)
	\right\|^2 \right\} \;.
	\end{align*}
	Then $\nu(\beta) = \nu_1(\beta)/\int_{\mathbb{R}^p} \nu_1(\beta') \,
	\df \beta'$ is a pdf, and whenever $\tilde{V}_1(\gamma) \le d$,
	\[
	\pi_{B|\Gamma,Y,X}(\beta|\gamma,Y,X) \geq \left( \int_{\mathbb{R}^p}
	\nu_1(\beta') \, \df\beta' \right) \nu(\beta) \;.
	\]
	This is~\eqref{eq:n-min-flipped-pre} with
	\[
	\varepsilon = \int_{\mathbb{R}^p} \nu_1(\beta) \, \df \beta =
	(2\pi)^{-p/2} \int_{\mathbb{R}^p} \inf_{\gamma: \|\gamma\|^2 \leq d}
	\exp\left( -\frac{1}{2} \left\|\beta-\gamma\right\|^2 \right) \, \df
	\beta \;.
	\]
	Finally, since $\|\beta-\gamma\|^2 \leq 2(\|\beta\|^2 +
	\|\gamma\|^2)$,
	\[
	\varepsilon \geq (2\pi)^{-p/2} \int_{\mathbb{R}^p} \inf_{\gamma:
		\|\gamma\|^2 \leq d} \exp\left( -\left\|\beta\right\|^2 -
	\|\gamma\|^2 \right) \, \df \beta = 2^{-p/2} e^{-d} \;.
	\]
\end{proof}

Mainly, Proposition~\ref{prop:driftmin-n} will be used to establish
asymptotic stability results for A\&C's chain in the large~$n$,
small~$p$ regime.  Indeed, since the Markov chains defined by $k_{\mbox{\scriptsize{AC}}}$
and $\tilde{k}_{\mbox{\scriptsize{AC}}}$ have the same geometric convergence rate,
$\hat{\rho}$ calculated using \eqref{eq:ros_bound}
with~$\lambda$,~$L$, and~$\varepsilon$ from
Proposition~\ref{prop:driftmin-n} is an upper bound on $\rho_* =
\rho_*(X,Y)$ for the A\&C chain.  On the other hand,
Proposition~\ref{prop:driftmin-n} can also be used in conjunction with
Theorem~\ref{thm:rosen} and Corollary~\ref{cor:tv_flip} to get
computable bounds on the total variation distance to stationarity for
the A\&C chain for fixed $n$ and $p$.  In order to state the result,
we require a bit of notation.  For an integer $m \geq 1$, let $k^{(m)}_{\mbox{\scriptsize{AC}}}: \mathbb{R}^p \times \mathbb{R}^p \to \mathbb{R}_+$ be the chain's $m$-step Mtd. For $\beta \in \mathbb{R}^p$, let $\varphi(\beta) =  \mathbb{E}\left(Z|B=\beta,Y,X \right) \in \mathbb{R}^n$. Then results in
Subsection~\ref{app:tn} of the Appendix show that the $i$th element of
$\varphi(\beta)$ is given by
\[
X_i^T\beta + \frac{\phi(X_i^T\beta)}{\Phi(X_i^T\beta)} 1_{\{1\}}(Y_i)
- \frac{\phi(X_i^T\beta)}{1 - \Phi(X_i^T\beta)} 1_{\{0\}}(Y_i) \;.
\]

\begin{proposition}
	\label{prop:tvv1}
	If $\hat{\rho}$ is calculated using \eqref{eq:ros_bound}
	with~$\lambda$,~$L$, and~$\varepsilon$ from
	Proposition~\ref{prop:driftmin-n}, then for $m \ge 1$ and
	$\beta \in \mathbb{R}^p$,
	\[
	\int_{\mathbb{R}^p}
	\big|k^{(m)}_{\mbox{\scriptsize{AC}}}(\beta,\beta';Y,X) -
	\pi_{B|Y,X}(\beta'|Y,X) \big| \, \df \beta' \le H(\beta) \,
	\hat{\rho}^{m-1} \;,
	\]
	where
	\begin{equation*}
	H(\beta) = 2 + \frac{L}{1-\lambda} + \mathrm{tr}\left( X
	\Sigma^{-1}X^T \right) + \left\| \Sigma^{1/2} \left\{ \Sigma^{-1}
	\left(X^T\varphi(\beta) + Qv\right) - \hat{B} \right\} \right\|^2 \;.
	\end{equation*}
\end{proposition}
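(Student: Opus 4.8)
The plan is to apply Corollary~\ref{cor:tv_flip} to the flipped chain driven by $\tilde{k}_{\mbox{\scriptsize{AC}}}$, for which Proposition~\ref{prop:driftmin-n} has already supplied d\&m conditions with drift function $\tilde{V}_1(\gamma) = \norm{\gamma}^2$, drift parameters $\lambda$ and $\tilde{L} = L = p(1+\lambda)$, minorization number $\tilde{\varepsilon} = \varepsilon$, and associated rate bound $\hat{\rho}_f = \hat{\rho}$ computed from~\eqref{eq:ros_bound}. Starting the original A\&C chain at $x = \beta$, the corollary gives
\[
\norm{K^m(\beta,\cdot) - \Pi(\cdot)}_{\mathrm{TV}} \le \Big( 2 + \frac{L}{1-\lambda} + \int_{\mathbb{R}^p} \tilde{V}_1(\gamma)\, \pi_{\Gamma|B,Y,X}(\gamma|\beta,Y,X)\, \df\gamma \Big) \hat{\rho}^{m-1} \,,
\]
where $\pi_{\Gamma|B,Y,X}(\cdot|\beta,Y,X)$ is exactly the conditional density produced by the change of variables $z \mapsto \Gamma$ defining the flipped chain. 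Since the $L^1$ integral on the left-hand side of the statement is the total variation distance appearing here, it remains only to bound the offset term by $\mathrm{tr}(X\Sigma^{-1}X^T) + \norm{\Sigma^{1/2}\{\Sigma^{-1}(X^T\varphi(\beta) + Qv) - \hat{B}\}}^2$.

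The key observation is that, because $\pi_{\Gamma|B,Y,X}(\cdot|\beta,Y,X)$ is the law of $\Gamma = \Sigma^{-1/2}(X^TZ + Qv) - \Sigma^{1/2}\hat{B}$ when $Z \sim \pi_{Z|B,Y,X}(\cdot|\beta,Y,X)$, the offset term is simply $\mathbb{E}(\norm{\Gamma}^2 \mid B = \beta, Y, X)$. I would split this using the bias--variance identity $\mathbb{E}\norm{\Gamma}^2 = \norm{\mathbb{E}\Gamma}^2 + \mathrm{tr}(\mathrm{Var}\,\Gamma)$. Because $\Gamma$ is an affine function of $Z$ with linear part $\Sigma^{-1/2}X^T$, and $\mathbb{E}(Z \mid B = \beta, Y, X) = \varphi(\beta)$ by the truncated-normal moment formula recorded before the statement, the mean is $\mathbb{E}(\Gamma \mid B = \beta, Y, X) = \Sigma^{1/2}\{\Sigma^{-1}(X^T\varphi(\beta) + Qv) - \hat{B}\}$, whose squared norm is precisely the last term of $H(\beta)$.

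For the variance contribution, $\mathrm{Var}(\Gamma \mid B = \beta, Y, X) = \Sigma^{-1/2}X^T\, \mathrm{Var}(Z \mid B = \beta, Y, X)\, X\Sigma^{-1/2}$, so invoking Lemma~\ref{lem:horrace} (which gives $\mathrm{Var}(Z \mid B = \beta, Y, X) \le I_n$) together with monotonicity of the trace under the positive semidefinite order yields $\mathrm{tr}(\mathrm{Var}\,\Gamma) \le \mathrm{tr}(\Sigma^{-1/2}X^TX\Sigma^{-1/2}) = \mathrm{tr}(X\Sigma^{-1}X^T)$, the remaining term of $H(\beta)$. Adding the two pieces and substituting into the corollary bound completes the argument. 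This computation is essentially a specialization of the moment calculation already carried out in the proof of Lemma~\ref{lem:drifts} (cf.~\eqref{eq:need1}--\eqref{eq:need2}), so I do not anticipate a genuinely hard step; the only points needing care are the identification of the offset integral with $\mathbb{E}\norm{\Gamma}^2$ through the flipped-chain change of variables, and the use of the operator inequality $\mathrm{Var}(Z\mid\cdot)\le I_n$ (rather than a scalar bound) to control the trace.
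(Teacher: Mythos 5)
Your proposal is correct and follows essentially the same route as the paper's proof: apply Corollary~\ref{cor:tv_flip} with the d\&m data from Proposition~\ref{prop:driftmin-n}, identify the offset integral as $\mathbb{E}\big(\|\Gamma\|^2 \mid B=\beta,Y,X\big)$, and evaluate it by the mean--variance decomposition, which is exactly what the paper does by putting $M = \Sigma^{1/2}$ in \eqref{eq:need1} and then bounding the trace term as in \eqref{eq:need2} via $\mathrm{var}(Z|B=\beta,Y,X) \le I_n$. Your two flagged points of care (the change-of-variables identification and using the operator inequality rather than a scalar bound) are handled identically in the paper, so there is no substantive difference.
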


\begin{proof}
	We simply apply Corollary~\ref{cor:tv_flip}.  Putting $M =
	\Sigma^{1/2}$ in \eqref{eq:need1}, we have
	\begin{align*}
	\int_{\mathbb{R}^p} \tilde{V}_1(\gamma) &\, \pi_{\Gamma|B,Y,X}
	(\gamma|\beta,Y,X) \, \df \gamma \\ 
	& = \int_{\mathbb{R}^n} \Big \|
	\Sigma^{1/2} \left\{ \Sigma^{-1} \left(X^Tz + Qv\right) - \hat{B} \right\}
	\Big \|^2 \, \pi_{Z|B,Y,X}(z|\beta,Y,X) \, \df z \\ & = \Big \|
	\Sigma^{1/2} \left\{ \Sigma^{-1} \left(X^T \varphi(\beta) + Qv\right)
	- \hat{B} \right\} \Big \|^2 \\ & \hspace{20mm} + \mathrm{tr} \left\{
	\Sigma^{-1/2} X^T \mathrm{var}(Z|B=\beta,Y,X) X \Sigma^{-1/2} \right\}
	\;.
	\end{align*}
	A calculation similar to \eqref{eq:need2} shows that
	\[
	\mathrm{tr} \left\{ \Sigma^{-1/2} X^T \mathrm{var}(Z|B=\beta,Y,X) X
	\Sigma^{-1/2} \right\} \leq \mathrm{tr}\left\{ X \Sigma^{-1}X^T
	\right\} \;,
	\]
	and the result follows.
\end{proof}

Calculating~$\lambda$ in Proposition~\ref{prop:driftmin-n} calls for
maximization of a function on $(0,1) \times \mathbb{R}^p$, which may
be difficult.  Here we provide an upper bound on~$\lambda$ that's easy
to compute when~$p$ is small. Let $\{S_j\}_{j=1}^{2^p}$ denote the
open orthants of $\mathbb{R}^p$. For instance, if $p = 2$, then
$S_1,S_2,S_3$ and $S_4$ are the open quadrants of the real plane.
Define $W(S_j) = W(S_j;X,Y)$ as follows
\[
W(S_j) = \sum_{X_i \in S_j} X_i 1_{\{0\}}(Y_i) X_i^T +
\sum_{X_i\in-S_j} X_i 1_{\{1\}}(Y_i) X_i^T \;.
\]
The following result is proven in Subsection~\ref{app:lambdasimple} of the Appendix.
\begin{proposition} 
	\label{prop:lambdasimple}
	An upper bound on $\lambda^{1/2}$ in Proposition~\ref{prop:driftmin-n} is
	\[
	\lambda_{\max} \left( \Sigma^{-1/2} X^TX
	\Sigma^{-1/2} \right) - \frac{2}{\pi} \min_{1 \leq j \leq 2^p}
	\lambda_{\min} \left( \Sigma^{-1/2} W(S_j) \Sigma^{-1/2} \right)  \;.
	\]
\end{proposition}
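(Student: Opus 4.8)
The plan is to reduce the claim to a bound on a single spectral radius, and then to exploit that each diagonal entry of $D(\beta)$ is a truncated-normal variance, together with a bookkeeping over the open orthants of $\mathbb{R}^p$.

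First I would \emph{decouple} the two occurrences of $\alpha$. Inside $\lambda$ the vector $\hat{B}+t\alpha$ (as $t\in(0,1)$, $\alpha\neq0$) ranges over $\mathbb{R}^p\setminus\{\hat{B}\}$, while a second copy of $\alpha$ multiplies $X$; replacing $\hat{B}+t\alpha$ by a free $\beta\in\mathbb{R}^p$ only enlarges the supremum, giving
\[
\lambda \le \sup_{\beta\in\mathbb{R}^p}\sup_{\alpha\neq0}\frac{\|\Sigma^{-1/2}X^TD(\beta)X\alpha\|^2}{\|\Sigma^{1/2}\alpha\|^2}\;.
\]
Substituting $u=\Sigma^{1/2}\alpha$ turns the inner supremum into the squared operator norm of the symmetric matrix $A(\beta):=\Sigma^{-1/2}X^TD(\beta)X\Sigma^{-1/2}$. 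By Lemma~\ref{lem:horrace} each diagonal entry of $D(\beta)$, being a truncated-normal variance, lies in $[0,1]$, so $A(\beta)$ is positive semidefinite and its operator norm equals $\lambda_{\max}(A(\beta))$. Hence $\lambda^{1/2}\le\sup_{\beta}\lambda_{\max}(A(\beta))$.

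Next I would pass to a variational form. Writing $v_i=v_i(u)=X_i^T\Sigma^{-1/2}u$, one has $\lambda_{\max}(A(\beta))=\max_{\|u\|=1}\sum_i D_{ii}(\beta)v_i^2$, where $D_{ii}(\beta)=1-g_i(\beta)$ with $g_i(\beta)=g(X_i^T\beta)1_{\{1\}}(Y_i)+g(-X_i^T\beta)1_{\{0\}}(Y_i)$. Since $\sup_\beta\max_u$ is simply the joint supremum, I may exchange the order and, for fixed $u$ with $\|u\|=1$, obtain
\[
\sup_\beta\sum_i D_{ii}(\beta)v_i^2 \;=\; \sum_i v_i^2 - \inf_\beta\sum_i g_i(\beta)v_i^2 \;=\; u^T\Sigma^{-1/2}X^TX\Sigma^{-1/2}u - \inf_\beta\sum_i g_i(\beta)v_i^2\;.
\]
It therefore remains to lower bound $\inf_\beta\sum_i g_i(\beta)v_i^2$.

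The orthant argument is the step I expect to require the most care. The key analytic input is that $g$ is decreasing with $g(0)=2/\pi$, so $g(\theta)\ge 2/\pi$ whenever $\theta\le0$; I would record this from the truncated-normal computations in Subsection~\ref{app:tn}. Fix an open orthant $S_j$ and suppose $\beta\in S_j$. If $X_i\in S_j$ and $Y_i=0$ then $X_i^T\beta>0$, so $g_i(\beta)=g(-X_i^T\beta)\ge 2/\pi$; if $X_i\in -S_j$ and $Y_i=1$ then $X_i^T\beta<0$, so $g_i(\beta)=g(X_i^T\beta)\ge 2/\pi$. These are precisely the indices collected in $W(S_j)$, so discarding the remaining nonnegative terms gives, for every $\beta\in S_j$,
\[
\sum_i g_i(\beta)v_i^2 \ge \tfrac{2}{\pi}\,u^T\Sigma^{-1/2}W(S_j)\Sigma^{-1/2}u\;.
\]
Because the open orthants are dense in $\mathbb{R}^p$ and $\beta\mapsto\sum_i g_i(\beta)v_i^2$ is continuous, the infimum over $\mathbb{R}^p$ equals the minimum over $j$ of the infima over each $S_j$, whence $\inf_\beta\sum_i g_i(\beta)v_i^2\ge \frac{2}{\pi}\min_j u^T\Sigma^{-1/2}W(S_j)\Sigma^{-1/2}u$. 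Combining this with the previous display and bounding $u^T\Sigma^{-1/2}X^TX\Sigma^{-1/2}u\le\lambda_{\max}(\Sigma^{-1/2}X^TX\Sigma^{-1/2})$ and $u^T\Sigma^{-1/2}W(S_j)\Sigma^{-1/2}u\ge\lambda_{\min}(\Sigma^{-1/2}W(S_j)\Sigma^{-1/2})$ for $\|u\|=1$ yields the stated bound. The delicate points are matching the sign conditions to the exact definition of $W(S_j)$ and justifying the passage from the open orthants to all of $\mathbb{R}^p$; the monotonicity giving $g(\theta)\ge 2/\pi$ for $\theta\le0$ is the one nontrivial analytic fact, and the place I expect to spend the most care.
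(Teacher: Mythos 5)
Your proof is correct and follows essentially the same route as the paper's: decouple $\hat{B}+t\alpha$ into a free $\beta$, invoke Lemma~\ref{lem:horrace} to get $g \ge 2/\pi$ on the relevant sign region, and partition $\mathbb{R}^p$ into orthants to recognize exactly the terms collected in $W(S_j)$. The only differences are cosmetic: you handle the eigenvalue bookkeeping via quadratic forms with a fixed test vector $u$ where the paper uses a matrix inequality plus Weyl's inequality, and you pass from open orthants to all of $\mathbb{R}^p$ by continuity and density where the paper works with closed orthants after a separate reduction at $\beta = 0$ --- both substitutions are valid.
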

\noindent If this upper bound is strictly less than~$1$ (which is
always true when~$Q$ is positive definite), then one can
replace~$\lambda$ with the square of this bound in
Proposition~\ref{prop:driftmin-n}.

\subsection{Drift and minorization for the Albert and Chib chain based on $V_2$}
\label{ssec:V_2}

The A\&C chain has the same convergence rate as the flipped chain
defined by the following Mtd:
\begin{align*}
\check{k}_{\mbox{\scriptsize{AC}}}(z,z') &:= \check{k}_{\mbox{\scriptsize{AC}}}(z,z';Y,X) \\ &= \int_{\mathbb{R}^p}
\pi_{Z|B,Y,X}(z'|\beta,Y,X) \pi_{B|Z,Y,X}(\beta|z,Y,X) \, \df \beta
\;.
\end{align*}
In this subsection, we use $V_2$ to establish d\&m
conditions for this chain, and these will be used later to prove
asymptotic stability as $p \to \infty$.  First, for $z \in
\mathbb{R}^n$, define
\[
w(z) = \big(X \Sigma^{-1} X^T \big)^{-1/2} X \big \{ \Sigma^{-1}
(X^Tz+Qv) - \hat{B} \big \} \;.
\]
Now define $\check{V}_2: \mathbb{R}^n \rightarrow [0,\infty)$ as
$\check{V}_2(z) = \norm{w(z)}^2$.

\begin{proposition}
	\label{prop:driftmin-p}
	The Mtd $\check{k}_{\mbox{\scriptsize{AC}}}$ satisfies the drift condition
	\begin{equation}
	\label{eq:pf1}
	\int_{\mathbb{R}^n} \check{V}_2(z') \check{k}_{\mbox{\scriptsize{AC}}}(z,z') \df z' \leq
	\lambda \check{V}_2(z) + L \;,
	\end{equation}
	where $\lambda = \lambda^2_{\max} \big \{ X \Sigma^{-1} X^T \big \}$
	and $L = n(1+\lambda)$.  Moreover, for $d > 2L/(1-\lambda)$,
	$\check{k}_{\mbox{\scriptsize{AC}}}$ satisfies
	\[
	\check{k}_{\mbox{\scriptsize{AC}}}(z,z') \geq \varepsilon q(z) \;,
	\]
	where $q: \mathbb{R}^n \rightarrow [0,\infty)$ is a pdf, and
	$\varepsilon = 2^{-n/2} e^{-d}$.
\end{proposition}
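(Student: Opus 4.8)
The plan is to mirror the proof of Proposition~\ref{prop:driftmin-n}, except that we now ``flip'' $k_{\mbox{\scriptsize{AC}}}$ onto the latent $z$-space $\mathbb{R}^n$ rather than onto the $\gamma$-space. The drift will follow by combining the generic bound of Proposition~\ref{prop:V_2} with the flipping device of Proposition~\ref{prop:bad_ref}, and the minorization will be obtained by bounding $\pi_{B|Z,Y,X}(\beta|z,Y,X)$ from below uniformly over the small set $\{z : \check{V}_2(z) \le d\}$, via the recipe at the end of Section~\ref{sec:d_m}. Concretely, for the drift I would identify $\check{k}_{\mbox{\scriptsize{AC}}}$ with the flipped kernel $\tilde{k}$ of Proposition~\ref{prop:bad_ref} by taking $\tilde{s}(\beta|z) = \pi_{B|Z,Y,X}(\beta|z,Y,X)$ and $\tilde{h}(z|\beta) = \pi_{Z|B,Y,X}(z|\beta,Y,X)$, so that $k_{\mbox{\scriptsize{AC}}}$ plays the role of $k$ and $V_2$ the role of $V$. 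Proposition~\ref{prop:V_2} supplies the needed drift for $k_{\mbox{\scriptsize{AC}}}$ with $\lambda = \lambda_{\max}^2(X\Sigma^{-1}X^T)$ and additive constant $2n$. The only computation is $\int_{\mathbb{R}^p} V_2(\beta)\,\pi_{B|Z,Y,X}(\beta|z,Y,X)\,\df\beta$: using the first display in the proof of Lemma~\ref{lem:drifts} with $M = (X\Sigma^{-1}X^T)^{-1/2}X$ and $B|Z,Y,X \sim N(\Sigma^{-1}(X^Tz+Qv),\Sigma^{-1})$, this integral equals $\check{V}_2(z) + \mathrm{tr}\{M\Sigma^{-1}M^T\}$, and since $M\Sigma^{-1}M^T = I_n$ the trace is $n$. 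Hence $\check{V}_2(z) = \int V_2(\beta)\,\tilde{s}(\beta|z)\,\df\beta - n$, so I apply Proposition~\ref{prop:bad_ref} with $c = -n$; this preserves non-negativity of $\check{V}_2$ and yields $\tilde{L} = 2n + c(1-\lambda) = n(1+\lambda)$, matching the claim, while stability is preserved by the Remark since $c \le 0$.

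For the minorization, by the argument at the end of Section~\ref{sec:d_m} it suffices to bound $\tilde{s}(\beta|z) = \pi_{B|Z,Y,X}(\beta|z,Y,X)$ below by $\varepsilon\,\nu(\beta)$ for a pdf $\nu$ whenever $\check{V}_2(z) \le d$; then $\check{k}_{\mbox{\scriptsize{AC}}}(z,z') \ge \varepsilon\, q(z')$ with $q(z') = \int_{\mathbb{R}^p} \pi_{Z|B,Y,X}(z'|\beta,Y,X)\,\nu(\beta)\,\df\beta$ a pdf on $\mathbb{R}^n$. As in Proposition~\ref{prop:driftmin-n}, I set $\nu_1(\beta) = \inf_{z:\check{V}_2(z)\le d} \pi_{B|Z,Y,X}(\beta|z,Y,X)$ and $\varepsilon = \int_{\mathbb{R}^p}\nu_1(\beta)\,\df\beta$. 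Writing the Gaussian mean as $\mu(z) = \Sigma^{-1}(X^Tz+Qv)$, the density is $\frac{|\Sigma|^{1/2}}{(2\pi)^{p/2}}\exp\{-\tfrac12\norm{\Sigma^{1/2}(\beta-\mu(z))}^2\}$, and the change of variables $\eta = \Sigma^{1/2}(\beta-\hat{B})$ gives $\varepsilon = (2\pi)^{-p/2}\int_{\mathbb{R}^p} \inf_{z} \exp\{-\tfrac12\norm{\eta-\xi(z)}^2\}\,\df\eta$, where $\xi(z) = \Sigma^{1/2}(\mu(z)-\hat{B}) = \Sigma^{-1/2}X^Tz + b_0$ for a fixed vector $b_0$.

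The crux, and the main obstacle, is to recover $2^{-n/2}e^{-d}$ rather than the naive $2^{-p/2}e^{-d}$. The key observation is that $\xi(z)$ varies only within the $n$-dimensional subspace $\mathcal{V} := \mathrm{col}(\Sigma^{-1/2}X^T)$ (up to the fixed offset $b_0$), and that $\check{V}_2(z) = \norm{\Pi_{\mathcal{V}}\,\xi(z)}^2$, where $\Pi_{\mathcal{V}}$ is orthogonal projection onto $\mathcal{V}$. This identity is the one genuinely new computation relative to Proposition~\ref{prop:driftmin-n}: setting $P = X\Sigma^{-1/2}$, one checks that $(X\Sigma^{-1}X^T)^{-1/2}X\Sigma^{-1/2} = (PP^T)^{-1/2}P$ has orthonormal rows, so that $\norm{(PP^T)^{-1/2}P\,\xi}^2 = \xi^T P^T(PP^T)^{-1}P\,\xi = \norm{\Pi_{\mathcal{V}}\xi}^2$; the full-row-rank assumption on $X$ is exactly what makes $PP^T = X\Sigma^{-1}X^T$ invertible here. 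I would then split $\mathbb{R}^p = \mathcal{V}\oplus\mathcal{V}^\perp$, write $\eta = \eta_{\parallel} + \eta_{\perp}$ and $\xi = \xi_{\parallel} + \xi_{\perp}$, and note that $\xi_{\perp}$ is constant in $z$ while the constraint $\check{V}_2(z)\le d$ reads $\norm{\xi_{\parallel}}^2 \le d$ with $\xi_{\parallel}$ ranging over the entire $n$-dimensional ball of radius $\sqrt{d}$. Integrating out the $\mathcal{V}^\perp$ block contributes the translation-invariant Gaussian factor $(2\pi)^{(p-n)/2}$, while on the $\mathcal{V}$ block the elementary bound $\norm{\eta_{\parallel}-\xi_{\parallel}}^2 \le 2(\norm{\eta_{\parallel}}^2 + \norm{\xi_{\parallel}}^2)$ gives $\int_{\mathbb{R}^n} e^{-\norm{\eta_{\parallel}}^2}\,\df\eta_{\parallel}\cdot e^{-d} = \pi^{n/2}e^{-d}$. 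Multiplying the pieces yields $\varepsilon \ge (2\pi)^{-p/2}(2\pi)^{(p-n)/2}\pi^{n/2}e^{-d} = 2^{-n/2}e^{-d}$, as asserted, and $\nu(\beta) = \nu_1(\beta)/\varepsilon$ together with the displayed $q$ completes the minorization.
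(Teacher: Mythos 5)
Your proof is correct, and its drift half coincides exactly with the paper's: the identity $\check{V}_2(z) = \int_{\mathbb{R}^p} V_2(\beta)\,\pi_{B|Z,Y,X}(\beta|z,Y,X)\,\df\beta - n$ followed by Proposition~\ref{prop:bad_ref} with $c=-n$ is precisely how the paper obtains \eqref{eq:pf1}. The minorization half, however, takes a genuinely different route to the same constant. The paper never integrates over the $p$-dimensional $\beta$-space: it observes that $\pi_{Z|B,Y,X}$ depends on $B$ only through $XB$, which under (B1) is in bijection with the $n$-dimensional vector $A := \big(X\Sigma^{-1}X^T\big)^{-1/2}X(B-\hat{B})$, re-expresses $\check{k}_{\mbox{\scriptsize{AC}}}(z,z') = \int_{\mathbb{R}^n} \pi_{Z|A,Y,X}(z'|\alpha,Y,X)\,\pi_{A|Z,Y,X}(\alpha|z,Y,X)\,\df\alpha$ with $A|Z,Y,X \sim N(w(Z), I_n)$, and then minorizes a literally $n$-dimensional standard-covariance Gaussian, for which the small-set constraint is exactly $\|w\|^2 \le d$ and the computation is verbatim the one from Proposition~\ref{prop:driftmin-n} with $p$ replaced by $n$. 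You instead minorize the full $p$-dimensional conditional $\pi_{B|Z,Y,X}$ and recover the $n$-dimensional rate through the projection identity $\check{V}_2(z) = \|\Pi_{\mathcal{V}}\,\xi(z)\|^2$ with $\mathcal{V} = \mathrm{col}(\Sigma^{-1/2}X^T)$, splitting $\mathbb{R}^p = \mathcal{V}\oplus\mathcal{V}^\perp$ and using that $\xi_\perp$ is constant in $z$ so the complement integrates to full Gaussian mass. Both arguments rest on the same structural fact --- the conditional mean of $B$ given $Z$ moves only in an $n$-dimensional affine subspace, and the component of $B$ orthogonal to it (after the $\Sigma^{1/2}$ change of scale) has a $z$-free conditional law --- but the paper performs the reduction once, at the level of the kernel, which is shorter and hands you $q$ on the reduced space immediately, whereas your version carries the reduction inside the integral and makes explicit why the naive bound fails: without the split one would get not only $2^{-p/2}$ but also a spurious factor $e^{-\|\xi_\perp\|^2}$, since the small set controls only $\|\xi_\parallel\|$, not $\|\xi\|$. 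One harmless imprecision: you do not actually need $\xi_\parallel$ to sweep the entire ball of radius $\sqrt{d}$ (taking the infimum over a subset only increases $\varepsilon$), though under (B1) it does, as you say.
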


\begin{proof}
	It's easy to verify that
	\[
	\check{V}_2(z) = \int_{\mathbb{R}^p} V_2(\beta') \pi_{B|Z,Y,X}(\beta'|z,Y,X) \, \df
	\beta' - n \;.
	\]
	We know from Proposition~\ref{prop:V_2} that
	\begin{align*}
	\int_{\mathbb{R}^p} V_2(\beta') \bigg\{ \int_{\mathbb{R}^n}
	\pi_{B|Z,Y,X}(\beta'|z',Y,X) \, &\pi_{Z|B,Y,X}(z'|\beta,Y,X) \, \df
	z' \bigg\} \, \df \beta' \\ 
	&\le \lambda V_2(\beta) + 2n \,.
	\end{align*}
	As in Proposition~\ref{prop:bad_ref}, multiplying both sides of the above inequality by the conditional density
	$\pi_{B|Z,Y,X}(\beta|z,Y,X)$ and integrating with respect to $\beta$
	yields \eqref{eq:pf1}.
	
	We now move on the the minorization condition.  Note that
	$\pi_{Z|B,Y,X}(z|B,Y,X)$ depends on $B$ only through $XB$, which is a
	one-to-one function of the $n$-dimensional vector
	\[
	A := \big( X \Sigma^{-1} X^T \big)^{-1/2} X(B-\hat{B}) \;.
	\] 
	Hence, $\check{k}_{\mbox{\scriptsize{AC}}}(z,z')$ can be re-expressed as
	\[
	\check{k}_{\mbox{\scriptsize{AC}}}(z,z') = \int_{\mathbb{R}^n}
	\pi_{Z|A,Y,X}(z'|\alpha,Y,X) \pi_{A|Z,Y,X}(\alpha|z,Y,X) \, \df \alpha
	\;,
	\]
	where $A|Z,Y,X \sim N( w(Z), I_n)$.  To get the minorization
	condition, we will construct a pdf $\nu(\alpha) := \nu(\alpha|Y,X)$
	such that
	\begin{equation*}
	\pi_{A|Z,Y,X}(\alpha|z,Y,X) \geq \varepsilon \nu(\alpha)
	\end{equation*}
	whenever $\check{V}_2(z) \leq d$.  Define 
	\begin{equation*}
	\nu_1(\alpha) = \inf_{z:\check{V}_2(z) \leq d}
	\pi_{A|Z,Y,X}(\alpha|z,Y,X) = \inf_{w:w^2 \leq d} (2\pi)^{-n/2} \exp
	\Big( -\frac{1}{2} \|\alpha - w \|^2 \Big) \;.
	\end{equation*}
	Then $\nu(\alpha) = \nu_1(\alpha)/\int_{\mathbb{R}^n} \nu_1(\alpha')
	\, \df \alpha'$ is a pdf, and
	\begin{align*}
	\varepsilon = (2\pi)^{-n/2} \int_{\mathbb{R}^n} \inf_{w:w^2 \leq d}
	&\exp \Big( -\frac{1}{2} \|\alpha - w \|^2 \Big) \, \df \alpha \\
	&\ge
	(2\pi)^{-n/2} e^{-d} \int_{\mathbb{R}^n} \exp( -\|\alpha \|^2) \, \df
	\alpha = 2^{-n/2} e^{-d} \;.
	\end{align*}
\end{proof}

The next result is the analogue of Proposition~\ref{prop:tvv1}.  The
proof is omitted as it is essentially the same as the proof of
the said proposition.

\begin{proposition}
	\label{prop:tvv2}
	Assume that $X$ has full row rank.  Let $\lambda$, $L$, and
	$\varepsilon$ be as in Proposition~\ref{prop:driftmin-p}.  If
	$\hat{\rho}$ is calculated using \eqref{eq:ros_bound}, then for $m \ge
	1$ and $\beta \in \mathbb{R}^p$,
	\[
	\int_{\mathbb{R}^p}
	\big|k^{(m)}_{\mbox{\scriptsize{AC}}}(\beta,\beta';Y,X) -
	\pi_{B|Y,X}(\beta'|Y,X) \big| \, \df \beta' \le H(\beta) \,
	\hat{\rho}^{m-1} \;,
	\]
	where
	\begin{align*}
	H(\beta) = 2 + \frac{L}{1-\lambda} + &\mathrm{tr}\left( X
	\Sigma^{-1}X^T \right) \\
	& + \left\| \big( X \Sigma^{-1} X^T \big
	)^{-1/2} X \left\{ \Sigma^{-1} \left(X^T\varphi(\beta) + Qv\right)
	- \hat{B} \right\} \right\|^2 \;.
	\end{align*}
\end{proposition}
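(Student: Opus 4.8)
The plan is to mirror the proof of Proposition~\ref{prop:tvv1} essentially line for line, replacing the $\Gamma$-based flipped chain by the $z$-based flipped chain $\check{k}_{\mbox{\scriptsize{AC}}}$ together with the d\&m conditions from Proposition~\ref{prop:driftmin-p}. Concretely, I would invoke Corollary~\ref{cor:tv_flip} with flipped Mtd $\check{k}_{\mbox{\scriptsize{AC}}}$, drift $\check{V}_2$, and parameters $\lambda$, $L$, $\varepsilon$ from Proposition~\ref{prop:driftmin-p}, so that $\hat{\rho}$ is the bound~\eqref{eq:ros_bound}. Here the latent reduction map $f$ is the identity on $\mathbb{R}^n$ (we are flipping the $B$-chain to the full $Z$-chain, i.e.\ the other block of the two-block Gibbs sampler), so $\tilde{h}(\gamma|x)$ in the corollary is exactly $\pi_{Z|B,Y,X}(z|\beta,Y,X)$. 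Thus the corollary's starting-value term $\int_{\Y}\tilde{V}(\gamma)\tilde{h}(\gamma|x)\,\df\gamma$ becomes $\int_{\mathbb{R}^n}\check{V}_2(z)\,\pi_{Z|B,Y,X}(z|\beta,Y,X)\,\df z$, and the entire argument reduces to bounding this single integral by $\mathrm{tr}(X\Sigma^{-1}X^T)+\norm{(X\Sigma^{-1}X^T)^{-1/2}X\{\Sigma^{-1}(X^T\varphi(\beta)+Qv)-\hat{B}\}}^2$.

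To evaluate this integral I would exploit the fact that $w(z)$ is affine in $z$. Writing $G=X\Sigma^{-1}X^T$, one has $w(z)=G^{1/2}z+G^{-1/2}X(\Sigma^{-1}Qv-\hat{B})$, since the coefficient of $z$ is $G^{-1/2}X\Sigma^{-1}X^T=G^{-1/2}G=G^{1/2}$. This is the only place where the full-row-rank hypothesis is genuinely used, as it guarantees that $G$ is positive definite and hence that $G^{\pm 1/2}$ exist. A standard bias--variance decomposition under $Z|B=\beta$ then gives $\int\check{V}_2(z)\,\pi_{Z|B,Y,X}(z|\beta,Y,X)\,\df z=\norm{\mathbb{E}(w(Z)|B=\beta)}^2+\mathrm{tr}\big(G^{1/2}\,\mathrm{var}(Z|B=\beta,Y,X)\,G^{1/2}\big)$.

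For the mean term I would substitute $\mathbb{E}(Z|B=\beta,Y,X)=\varphi(\beta)$ and collapse $G^{1/2}\varphi(\beta)+G^{-1/2}X(\Sigma^{-1}Qv-\hat{B})=G^{-1/2}X\{\Sigma^{-1}(X^T\varphi(\beta)+Qv)-\hat{B}\}$, which is precisely the squared-norm term in $H(\beta)$. For the variance term, Lemma~\ref{lem:horrace} gives $\mathrm{var}(Z|B=\beta,Y,X)\le I_n$, so that $\mathrm{tr}\big(G^{1/2}\,\mathrm{var}(Z|B=\beta,Y,X)\,G^{1/2}\big)\le\mathrm{tr}(G)=\mathrm{tr}(X\Sigma^{-1}X^T)$, matching the remaining term in $H(\beta)$. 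Feeding these two bounds into Corollary~\ref{cor:tv_flip} yields the claim; the constant $2+L/(1-\lambda)$ comes over unchanged from the corollary.

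I do not expect a real obstacle here. Once one recognizes that the relevant flipped chain is the $Z$-chain (so $\tilde{h}=\pi_{Z|B,Y,X}$) and that the coefficient matrix simplifies via $G^{-1/2}G=G^{1/2}$, the argument is a transcription of the Proposition~\ref{prop:tvv1} proof with $\Sigma^{1/2}$ replaced by $(X\Sigma^{-1}X^T)^{-1/2}X$. The only point requiring a moment's care is the matrix bookkeeping that makes the two pieces of the integral land exactly on the two explicit terms of $H(\beta)$, and this is routine given Lemma~\ref{lem:horrace} and the definition of $\varphi$.
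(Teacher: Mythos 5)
Your proof is correct and is precisely the argument the paper intends: the paper omits the proof of Proposition~\ref{prop:tvv2}, stating it is essentially the same as that of Proposition~\ref{prop:tvv1}, and your transcription---applying Corollary~\ref{cor:tv_flip} to the flipped chain $\check{k}_{\mbox{\scriptsize{AC}}}$ with the d\&m parameters of Proposition~\ref{prop:driftmin-p}, then evaluating $\int_{\mathbb{R}^n}\check{V}_2(z)\,\pi_{Z|B,Y,X}(z|\beta,Y,X)\,\df z$ via the bias--variance decomposition (equivalently, taking $M=(X\Sigma^{-1}X^T)^{-1/2}X$ in \eqref{eq:need1}) and bounding the variance term through Lemma~\ref{lem:horrace}---is exactly that argument. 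The matrix simplification $G^{-1/2}X\Sigma^{-1}X^T=G^{1/2}$ and the collapse of the mean term onto the squared-norm term of $H(\beta)$ are both right, so nothing is missing.
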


Let $\rho_* = \rho_*(X,Y)$ denote the geometric convergence rate of
the A\&C chain.  In the next section, which is the heart of the paper,
we develop general convergence complexity results showing that, under
weak regularity conditions, $\rho_*$ is bounded away from 1 both as $n
\rightarrow \infty$ (for fixed $p$), and as $p \rightarrow \infty$
(for fixed $n$).

\section{Results for the Albert and Chib Chain Part II: Asymptotics}
\label{sec:rp2}

\subsection{Large $n$, small $p$}
\label{sec:lnsp}

In this section, we consider the case where $p$ is fixed and $n$
grows.  In particular, we are interested in what happens to the
geometric convergence rate of the A\&C chain in this setting.  Recall
that the prior on $B$ is
\[
\omega(\beta) \propto \exp\{ -(\beta-v)^TQ(\beta-v)/2 \} \;.
\]
Since $p$ is fixed, so is the prior.  Hence, the hyperparameters $v$
and $Q$ will remain fixed throughout this subsection.  In
Subsection~\ref{ssec:basics}, we introduced the data set $\mathcal{D}
:= \{(X_i,Y_i)\}_{i=1}^n$.  We now let~$n$ vary, and consider a sequence of data sets,
$\mathcal{D}_n := \{(X_i,Y_i)\}_{i=1}^n, \, n \geq 1$.  So, each time $n$ increases
by 1, we are are given a new $p \times 1$ covariate vector and
a corresponding binary response.  In order to study the
asymptotics, we assume that the $(X_i,Y_i)$ pairs are generated
according to a random mechanism that is consistent with the probit
regression model.  In particular, we make the following assumptions:
\begin{enumerate}
	\item[($A1$)] The pairs $\{(X_i,Y_i)\}_{i=1}^\infty$ are iid random
	vectors such that \[Y_i|X_i \sim \mbox{Bernoulli}(G(X_i)),\] where
	$G : \mathbb{R}^p \rightarrow (0,1)$ is a measurable function;
	\item[($A2$)] $\mathbb{E}X_1X_1^T$ is finite and positive definite;
	\item[($A3$)] For $j \in \{1,2,\dots,2^p\}$ and $\beta \ne 0$,
	\[
	\mathbb{P} \Big( \big(1_{S_j}(X_1) + 1_{S_j}(-X_1) \big) X_1^T\beta
	\neq 0 \Big) > 0 \;.
	\]
\end{enumerate}

Assumption (A1) contains the probit model as a special case.  Thus,
our results concerning the asymptotic behavior of A\&C's Markov chain
do not require strict adherence of the data to the probit regression
model.  The main reason for assuming (A2) is to guarantee that, almost
surely, $X$ will eventually have full column rank.  This is a
necessary condition for posterior propriety when $Q=0$.  While (A3) is
rather technical, it is clearly satisfied if $X_1$ follows a
distribution admitting a pdf (with respect to Lebesgue measure) that
is positive over some open ball $\{\gamma \in \mathbb{R}^p:
\|\gamma\|^2 \le c \}$, where $c > 0$.  It is shown in
Subsection~\ref{app:intercept} of the Appendix that (A3) also allows for an intercept.
That is, even if the first component of $X_1$ is~1 (constant), then as
long as the remaining $p-1$ components satisfy the density condition
described above, (A3) is still satisfied.

It was assumed throughout Section~\ref{sec:bayesprobitac} that the
posterior distribution is proper.  Of course, for a fixed data set,
this is check-able.  All we need in the large $n$, small $p$ regime is
a guarantee that the posterior is proper for all large $n$, almost
surely.  The following result is proven in
Subsection~\ref{app:proper-n}.

\begin{proposition}
	\label{prop:proper-n}
	Under assumptions (A1)-(A3), almost surely, the posterior
	distribution is proper for all sufficiently large~$n$.
\end{proposition}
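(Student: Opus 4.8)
The plan is to verify the two conditions $(C1)$ and $(C2)$ of \citet{chen2000propriety} almost surely for all large~$n$, since together these are necessary and sufficient for posterior propriety under the flat prior. (For positive definite $Q$ propriety is automatic, so the content is entirely in the $Q=0$ case.) Condition $(C1)$ asks that $X$ eventually have full column rank. This follows directly from assumption (A2): the empirical second-moment matrix $n^{-1}\sum_{i=1}^n X_iX_i^T$ converges almost surely to $\mathbb{E}X_1X_1^T$ by the strong law of large numbers, and since the limit is positive definite by (A2), the empirical matrix is eventually positive definite, hence nonsingular. As $X^TX = \sum_{i=1}^n X_iX_i^T$, this forces $X$ to have full column rank for all sufficiently large~$n$, almost surely.

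The substance of the argument is condition $(C2)$, namely the existence of a strictly positive vector $a$ with $X_*^T a = 0$, where $X_*$ has $i$th row $-X_i^T 1_{\{1\}}(Y_i) + X_i^T 1_{\{0\}}(Y_i)$. I would recast this as a separation/interior-point statement: $(C2)$ holds if and only if the origin lies in the interior of the convex cone generated by the rows of $X_*$ (equivalently, the $X_i$'s are not all confined, after the sign flips dictated by $Y_i$, to a single closed halfspace through the origin). The natural way to guarantee this is to show that, almost surely, the signed covariates eventually ``surround'' the origin in every direction. Fix a direction $\beta\neq 0$; the point is to show that for large~$n$ there exist indices with $(-X_i^T\beta)>0$ among the successes and $(X_i^T\beta)>0$ among the failures, with enough mass to place $0$ strictly inside the cone. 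This is precisely where assumptions (A1) and (A3) enter: under (A1) each $Y_i$ is a genuine Bernoulli draw with success probability in $(0,1)$, so with probability one infinitely many successes and infinitely many failures accrue in any region of positive probability; and (A3) guarantees that in every orthant $S_j$ the projections $X_1^T\beta$ are nondegenerate, so no direction $\beta$ can be a permanent separating hyperplane.

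Concretely, I would argue by a covering/compactness reduction over the unit sphere. For each fixed $\beta$ on the sphere, (A1)--(A3) imply that the probability of drawing a pair $(X_i,Y_i)$ contributing a strictly positive term to both the ``success'' side and the ``failure'' side of the cone condition is bounded below by a positive constant; by Borel--Cantelli (or the second Borel--Cantelli lemma, using independence from (A1)), such pairs occur infinitely often almost surely. To upgrade this from a fixed $\beta$ to a uniform statement over all directions simultaneously, I would use a finite $\delta$-net on the sphere together with the fact that the relevant halfspace/cone membership is stable under small perturbations of $\beta$, so controlling finitely many net directions controls all directions. Taking a countable intersection of the resulting almost-sure events yields that, almost surely, $0$ lies strictly inside the generated cone for all large~$n$, which is $(C2)$.

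The main obstacle I anticipate is the uniformity over $\beta$: the per-direction argument is routine, but translating ``for each $\beta$, eventually surrounded'' into ``eventually, surrounded for every $\beta$'' requires care, since the time at which surrounding occurs could in principle depend badly on $\beta$. The role of (A3) is exactly to rule out degenerate directions and make the net argument work with a uniform positive lower bound on the per-sample contribution probability; I expect the proof to hinge on extracting that uniform bound from (A3) and on a quantitative version of the cone-interior condition (e.g.\ requiring the positive terms to exceed a fixed margin) that is robust enough to survive the net discretization.
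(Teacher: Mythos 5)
Your verification of (C1) is exactly the paper's: the strong law plus (A2) makes $n^{-1}X^TX \to \mathbb{E}X_1X_1^T \succ 0$, so $X$ eventually has full column rank almost surely. The problem is in the second half. You correctly identify the uniformity over directions $\beta$ as the crux, but your proposed resolution --- a $\delta$-net on the unit sphere plus ``stability of halfspace membership under small perturbations'' --- does not go through as sketched, and you do not close it. The event $\{X_i^T\beta > 0\}$ is not stable under perturbation of $\beta$ uniformly in $X_i$: covariates nearly orthogonal to $\beta$ flip sides under arbitrarily small rotations, so controlling the net directions does not control all directions unless you first establish a uniform positive \emph{margin} (e.g.\ data points with $X_i^T\beta > \delta\|X_i\|$ for some fixed $\delta>0$). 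But (A3) is a purely qualitative, per-$\beta$ nondegeneracy condition --- it asserts $\mathbb{P}\big((1_{S_j}(X_1)+1_{S_j}(-X_1))X_1^T\beta \neq 0\big)>0$ for each fixed $\beta$, with no quantitative content and no uniform lower bound over the sphere. Your phrase ``I expect the proof to hinge on extracting that uniform bound from (A3)'' is precisely the missing step: extracting it would itself require a compactness/lower-semicontinuity argument (or something equivalent), so the proposal is a plan with the decisive lemma unproved rather than a complete proof.

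The paper's proof avoids the net entirely, and it is worth seeing how. It works with Chen--Shao's condition (C0) (existence of the MLE, equivalent to your cone-interior reformulation of (C2) given full column rank) and partitions $\mathbb{R}^p\setminus\{0\}$ into the $2^p$ closed orthants $S_j'$. The key geometric observation is that if $\beta \in S_{j}'\setminus\{0\}$ and $\gamma$ lies in the corresponding \emph{open} orthant $S_{j}$, then $\gamma^T\beta > 0$ automatically --- no margin, no perturbation argument. Consequently it suffices to show that, almost surely, for each of the \emph{finitely many} orthants there eventually exist two observations $i',i''$ with $X_{i'},X_{i''} \in S_j$ (or both in $-S_j$) and $(Y_{i'},Y_{i''})=(1,0)$; this single pair certifies (C0) simultaneously for every $\beta$ in that orthant. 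Each such event has positive per-draw probability by (A1) and (A3) (since $G$ takes values strictly in $(0,1)$, any orthant charged by the law of $X_1$ receives both successes and failures), so by the second Borel--Cantelli lemma all $2^p$ events occur for all large $n$ almost surely. In short: where you attempt to discretize the sphere, the paper discretizes by orthants, which converts the problematic uniform-in-$\beta$ statement into a finite intersection of almost-sure events and dissolves the margin issue at the source. Your argument could likely be repaired along these lines, but as written it has a genuine gap at its central step.
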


For fixed $n$, $\mathcal{D}_n = \{(X_i,Y_i)\}_{i=1}^n$ represents the
first $n$ (random) covariate vectors and responses.  Let
$\rho_*(\mathcal{D}_n)$ denote the (random) geometric convergence rate
of the corresponding A\&C chain.  Here is one of our main results.

\begin{theorem} 
	\label{thm:stable-n}
	If (A1)-(A3) hold, then there exists a constant $\rho < 1$ such
	that, almost surely,
	\[
	\limsup_{n \to \infty} \rho_*(\mathcal{D}_n) \leq \rho \;.
	\]
\end{theorem}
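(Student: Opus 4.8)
The plan is to combine the flipped-chain drift and minorization conditions of Proposition~\ref{prop:driftmin-n} with the stability criterion in Proposition~\ref{prop:stability}. Since the A\&C chain and the flipped chain $\tilde{k}_{\mbox{\scriptsize{AC}}}$ share the same geometric convergence rate, the Rosenthal bound $\hat{\rho}^{(n)}$ computed from the flipped-chain parameters $(\lambda^{(n)}, L^{(n)}, \varepsilon^{(n)})$ via~\eqref{eq:ros_bound} dominates $\rho_*(\mathcal{D}_n)$. It therefore suffices to produce a deterministic $\rho < 1$ with $\hat{\rho}^{(n)} \le \rho$ eventually, almost surely. Because $p$ is fixed, $L^{(n)} = p(1 + \lambda^{(n)}) \le 2p$ is automatically bounded (condition (ii$'$)), so by the discussion following Proposition~\ref{prop:stability} everything reduces to (a) showing $\lambda^{(n)}$ is eventually bounded away from~$1$ almost surely, and (b) deducing from (a) that a single fixed $d$ may be used for all large $n$, rendering $\varepsilon^{(n)} = 2^{-p/2} e^{-d}$ a fixed positive constant (condition (iii$'$)). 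Throughout I would work on the almost-sure event, furnished by Proposition~\ref{prop:proper-n}, on which the posterior is proper for all large $n$, so that $\rho_*(\mathcal{D}_n)$ is well defined.

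The heart of the argument is step (a). I would bound $\lambda^{(n)}$ using Proposition~\ref{prop:lambdasimple}, which gives
\[
\big(\lambda^{(n)}\big)^{1/2} \le \lambda_{\max}\big(\Sigma^{-1/2} X^T X\, \Sigma^{-1/2}\big) - \frac{2}{\pi} \min_{1 \le j \le 2^p} \lambda_{\min}\big(\Sigma^{-1/2} W(S_j)\, \Sigma^{-1/2}\big) \;.
\]
Because $X^T X \le \Sigma = X^T X + Q$, the first term is at most~$1$. For the second term the normalizations cancel: writing $\bar{\Sigma}_n = n^{-1}\Sigma$ and $\bar{W}_n(S_j) = n^{-1} W(S_j)$, one has $\Sigma^{-1/2} W(S_j)\, \Sigma^{-1/2} = \bar{\Sigma}_n^{-1/2}\, \bar{W}_n(S_j)\, \bar{\Sigma}_n^{-1/2}$. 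By the strong law of large numbers under (A1)--(A2), $\bar{\Sigma}_n \to \mathbb{E} X_1 X_1^T =: \Sigma_\infty$, which is positive definite, while $\bar{W}_n(S_j)$ converges almost surely to
\[
\bar{W}_\infty(S_j) = \mathbb{E}\Big[ X_1 X_1^T \big( 1_{S_j}(X_1)(1 - G(X_1)) + 1_{-S_j}(X_1) G(X_1) \big) \Big] \;,
\]
where I have conditioned on $X_1$ and used $Y_1 | X_1 \sim \mbox{Bernoulli}(G(X_1))$. Continuity of the matrix inverse, square root, and eigenvalue maps, together with finiteness of the index set $\{1,\dots,2^p\}$, then reduce step (a) to showing that each limit $\bar{W}_\infty(S_j)$ is positive definite.

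This positive-definiteness is exactly where the hypotheses do their work, and it is the main obstacle. For any fixed $\beta \ne 0$,
\[
\beta^T \bar{W}_\infty(S_j)\, \beta = \mathbb{E}\Big[ (X_1^T\beta)^2 \big( 1_{S_j}(X_1)(1 - G(X_1)) + 1_{-S_j}(X_1) G(X_1) \big) \Big] \;.
\]
Assumption (A1) forces $G(X_1) \in (0,1)$ strictly, so both weights are positive and the integrand is nonnegative, vanishing precisely off the event $\{X_1 \in S_j \cup (-S_j)\} \cap \{X_1^T\beta \ne 0\}$. Assumption (A3) states exactly that this event has positive probability (note $1_{-S_j}(X_1) = 1_{S_j}(-X_1)$), so the expectation is strictly positive. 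As $\beta \ne 0$ was arbitrary, $\bar{W}_\infty(S_j)$ is positive definite for every $j$; hence $\min_j \lambda_{\min}(\Sigma^{-1/2} W(S_j)\, \Sigma^{-1/2})$ converges almost surely to a strictly positive limit, and eventually exceeds some deterministic $c > 0$. Combined with $\lambda_{\max}(\Sigma^{-1/2} X^T X\, \Sigma^{-1/2}) \le 1$, this yields $(\lambda^{(n)})^{1/2} \le 1 - \tfrac{2}{\pi} c$ for all large $n$, establishing step (a).

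Finally I would assemble the pieces. With $\lambda^{(n)} \le \lambda_0 < 1$ and $L^{(n)} \le 2p$ for all large $n$, the requirement $d > 2L^{(n)}/(1-\lambda^{(n)})$ is met by any fixed $d > 2p(1+\lambda_0)/(1-\lambda_0)$, so $\varepsilon^{(n)} = 2^{-p/2} e^{-d}$ is a fixed positive number, giving step (b). The triples $(\lambda^{(n)}, L^{(n)}, \varepsilon^{(n)})$ then lie in a fixed compact region on which $\lambda$ is bounded away from~$1$, $L$ is bounded, and $\varepsilon$ is bounded away from~$0$; since the infimum over $r \in (0,1)$ of the bound~\eqref{eq:ros_bound} is continuous in these parameters and strictly below~$1$ throughout such a region, one can fix a single $\rho < 1$ with $\hat{\rho}^{(n)} \le \rho$ for all large $n$. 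Therefore $\rho_*(\mathcal{D}_n) \le \hat{\rho}^{(n)} \le \rho$ eventually, almost surely, and taking $\limsup$ completes the proof.
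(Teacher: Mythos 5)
Your proof is correct and takes essentially the same route as the paper's: reduce the problem to bounding $\lambda^{(n)}$ away from~$1$ almost surely via Proposition~\ref{prop:lambdasimple} (since $L$ and $\varepsilon$ are then automatically controlled), normalize by $n$ and apply the strong law under (A1)--(A2), and use (A3) together with $G(X_1)\in(0,1)$ to show the limiting matrices are positive definite. The only cosmetic differences are that you invoke continuity of the inverse square root and eigenvalue maps to obtain the exact limit where the paper instead lower-bounds via the eigenvalue inequality~\eqref{eq:m2r} and Lemma~\ref{lem:matrixlimit}, and you spell out the final uniformity of the Rosenthal bound~\eqref{eq:ros_bound} in $(\lambda,L,\varepsilon)$, which the paper delegates to the discussion following Proposition~\ref{prop:stability}.
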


\begin{proof}
	Let $\hat{\rho}(\mathcal{D}_n)$ denote the upper bound on
	$\rho_*(\mathcal{D}_n)$ that is based on $\lambda$, $L$, and
	$\varepsilon$ from Proposition~\ref{prop:driftmin-n}.  We prove the
	result by showing that, almost surely, $\limsup_{n \to \infty}
	\hat{\rho}(\mathcal{D}_n) \le \rho < 1$.  Note that $L=p(1+\lambda)$
	and $\varepsilon = 2^{-p/2} e^{-d}$ (where $d > 2L/(1-\lambda)$).
	Thus, control over $\lambda$ provides control over $L$ and
	$\varepsilon$ as well.  In particular, to prove the result it
	suffices to show that there exists a constant $c \in [0,1)$, such
	that, almost surely,
	\begin{equation}
	\label{eq:lsl}
	\limsup_{n \to \infty} \lambda(\mathcal{D}_n) \le c \;.
	\end{equation}
	Noting that $\Sigma^{-1/2} X^TX \Sigma^{-1/2} \leq I_p$, we have, by
	Proposition~\ref{prop:lambdasimple},
	\begin{equation}
	\label{eq:driftproof2}
	\begin{aligned}
	\lambda^{1/2} \leq 1 - \frac{2}{\pi} \min_{1 \leq j \leq 2^p}
	\lambda_{\min} \Bigg\{ &\left(\frac{\Sigma}{n}\right)^{-1/2} \Bigg( \frac{1}{n}
	\sum_{X_i \in S_j} X_i 1_{\{0\}}(Y_i) X_i^T \\& + \frac{1}{n}
	\sum_{X_i \in -S_j} X_i 1_{\{1\}}(Y_i) X_i^T \Bigg)
	\left(\frac{\Sigma}{n}\right)^{-1/2} \Bigg \} \;.
	\end{aligned}
	\end{equation}
	Fix $j \in \{1,2,\dots,2^p\}$.  By (A1) and the strong law, almost
	surely,
	\begin{align*}
	\lim_{n \to \infty}  \left(\frac{\Sigma}{n}\right)^{-1/2} \Bigg( \frac{1}{n} \sum_{X_i
		\in S_j} X_i &1_{\{0\}}(Y_i) X_i^T + \frac{1}{n} \sum_{X_i \in
		-S_j} X_i 1_{\{1\}}(Y_i) X_i^T \Bigg) \left(\frac{\Sigma}{n}\right)^{-1/2} \\  =
	\big( \mathbb{E}X_1X_1^T \big)^{-1/2} &\Big\{ \mathbb{E}X_1X_1^T
	1_{S_j}(X_1) (1 - G(X_1)) \\
	& + \mathbb{E}X_1X_1^T 1_{S_j}(-X_1) G(X_1)
	\Big\} \big( \mathbb{E}X_1X_1^T \big)^{-1/2} \;.
	\end{align*}
	It follows from \eqref{eq:m2r} and Lemma~\ref{lem:matrixlimit} that,
	almost surely,
	\begin{equation}\label{eq:driftproof3}
	\begin{aligned}
	\lim_{n \to \infty} \lambda_{\min} \Bigg\{  \left(\frac{\Sigma}{n}\right)^{-1/2} & \Bigg(
	\frac{1}{n} \sum_{X_i \in S_j} X_i 1_{\{0\}}(Y_i) X_i^T \\
	& + \frac{1}{n} \sum_{X_i \in -S_j} X_i 1_{\{1\}}(Y_i) X_i^T \Bigg)
	\left(\frac{\Sigma}{n}\right)^{-1/2} \Bigg \} \\  \ge \lambda^{-1}_{\max}
	\big( \mathbb{E}X_1X_1^T \big) & \lambda_{\min} \Big\{ \mathbb{E}X_1X_1^T
	1_{S_j}(X_1) (1 - G(X_1)) \\
	& + \mathbb{E}X_1X_1^T 1_{S_j}(-X_1) G(X_1)
	\Big\} \;.
	\end{aligned}
	\end{equation}
	By (A2), $\lambda^{-1}_{\max} \big( \mathbb{E}X_1X_1^T \big)>0$.
	Hence by \eqref{eq:driftproof2} and \eqref{eq:driftproof3}, to show
	that \eqref{eq:lsl} holds, almost surely, it is enough to show that
	\[
	\lambda_{\min} \Big\{ \mathbb{E}X_1X_1^T 1_{S_j}(X_1) (1 - G(X_1)) +
	\mathbb{E}X_1X_1^T 1_{S_j}(-X_1) G(X_1) \Big\} > 0 \;.
	\]
	By (A3), for any $\beta \neq 0$,
	\begin{equation}
	\label{eq:A3or}
	\mathbb{P} \Big( 1_{S_j}(X_1) X_1^T\beta \neq 0 \;\; \text{or} \;\;
	1_{S_j}(-X_1) X_1^T \beta \neq 0 \Big) > 0 \;.
	\end{equation}
	Since $0 < G(X_1) < 1$, 
	\eqref{eq:A3or} implies that,
	for any $\beta \neq 0$, 
	\[
	\mathbb{P} \Big( \beta^T \big\{ X_1X_1^T 1_{S_j}(X_1) \big(1 - G(X_1)
	\big) \big\} \beta + \beta^T \big\{ X_1X_1^T 1_{S_j}(-X_1) G(X_1)
	\big\} \beta > 0 \Big) > 0 \;
	\]
	As a result, for any $\beta \in \mathbb{R}^p$ such that $\|\beta\|^2 =
	1$,
	\begin{equation}
	\label{eq:functionbeta}
	\beta^T \Big\{ \mathbb{E}X_1X_1^T 1_{S_j}(X_1) (1 - G(X_1)) +
	\mathbb{E}X_1X_1^T 1_{S_j}(-X_1) G(X_1) \Big\} \beta > 0 \;.
	\end{equation}
	It follows from (A2) that the left-hand side
	of~\eqref{eq:functionbeta} is continuous in~$\beta$.  Hence, we can
	take infimum on both sides of the inequality with respect to~$\beta$
	and retain the greater-than symbol, which yields
	\[
	\lambda_{\min} \Big\{ \mathbb{E}X_1X_1^T 1_{S_j}(X_1) (1 - G(X_1)) +
	\mathbb{E}X_1X_1^T 1_{S_j}(-X_1) G(X_1) \Big\} > 0 \;,
	\]
	and the result follows.
\end{proof}

\begin{remark}
	From the proof of Theorem~\ref{thm:stable-n} it's easy to see that the asymptotic bound~$\rho$ in the said theorem is unaffected by the precision matrix~$Q$. This is because the effect of the prior is overshadowed by the increasing amount of data as $n \to \infty$.
\end{remark}

Theorem~\ref{thm:stable-n} shows that, under weak regularity
conditions on the random mechanism that generates $\mathcal{D}_n =
\{(X_i,Y_i)\}_{i=1}^n$, A\&C's MCMC algorithm scales well with $n$.
\citet{johndrow2016inefficiency} studied the convergence rate of the
A\&C chain as $n \rightarrow \infty$ for a particular \textit{fixed}
sequence of covariates and responses.  Suppose that $p=1$, $Q>0$ is a
constant, $v = 0$, and $X_1 = X_2 = \cdots = X_n = 1$.
They showed that if \textit{all} the
Bernoulli trials result in success, i.e., $Y_1 = Y_2 = \cdots = Y_n =
1$, then $\lim_{n \to \infty} \rho_{**}(\mathcal{D}_n) = 1$.  That is, in terms of $L^2$-geometric convergence rate, the convergence
is arbitrarily slow for sufficiently large~$n$.  As we now explain,
our results can be used to show that, in
\pcite{johndrow2016inefficiency} setting, almost any other sequence of
responses leads to well behaved convergence rates.  Let
$\{Y_i\}_{i=1}^\infty$ denote a fixed sequence of binary responses,
and define $\hat{p}_n = n^{-1} \sum_{i=1}^n Y_i$.  It follows from
Propositions~\ref{prop:driftmin-n} and~\ref{prop:lambdasimple} that
the A\&C chain satisfies d\&m conditions with
\[
\lambda = \bigg[ \frac{n}{n+Q} - \frac{2}{\pi} \frac{n}{n+Q} \big\{
\hat{p}_n \wedge ( 1- \hat{p}_n ) \big\} \bigg]^2 \; \leq \left[ 1
- \frac{2}{\pi}\{\hat{p}_n \wedge (1-\hat{p}_n)\} \right]^2,
\]
$L = 1+\lambda$, and $\varepsilon = 2^{-1/2} e^{-d}$ for $d >
2L/(1-\lambda)$.  For any fixed~$n$, suppose that there exist $c_1,
c_2 \in (0,1)$ such that $c_1 \leq \hat{p}_n \leq c_2$, then
(using~\eqref{eq:ros_bound}) one can find $\rho<1$, which depends
\textit{only} on $c_1 \wedge ( 1- c_2 )$, such that $
\rho_*(\mathcal{D}_n) \le \rho \;.  $ It now follows that the
geometric convergence rates, $\rho_*(\mathcal{D}_n)$, are eventually
bounded away from 1 so long as $0 < \liminf_{n \to \infty} \hat{p}_n
\le \limsup_{n \to \infty} \hat{p}_n < 1$. (It is important to note
that $\mathcal{D}_n$ is \textit{not} random here.)  Moreover, an
analogous result holds for $\rho_{**}(\mathcal{D}_n) $. Here's a
formal statement.

\begin{corollary}
	\label{corr:johndrow}
	For the intercept-only model described above, if \[0 < \liminf_{n \to \infty} \hat{p}_n \le \limsup_{n \to \infty}
	\hat{p}_n < 1,\] then for any
	\[
	\delta \in \left(0, 1 - \left[ 1 - \frac{2}{\pi} \left\{\liminf_{n
		\to \infty}\hat{p}_n \wedge \left(1-\limsup_{n \to
		\infty}\hat{p}_n \right) \right\} \right]^2 \right),
	\]
	$\limsup_{n \to \infty} \rho_*(\mathcal{D}_n) \le\rho < 1$, and
	$\limsup_{n \to \infty} \rho_{**}(\mathcal{D}_n) \le \rho < 1$,
	where~$\rho$ equals~$\hat{\rho}$ in~\eqref{eq:ros_bound} with
	\[
	\lambda = \left[ 1 - \frac{2}{\pi} \left\{\liminf_{n \to
		\infty}\hat{p}_n \wedge \left(1-\limsup_{n \to \infty}\hat{p}_n
	\right) \right\} \right]^2 + \delta \;,
	\]
	$L = 1+\lambda \,$, and $\varepsilon = 2^{-1/2} e^{-d}$, where $d >
	2L/(1-\lambda)$.
\end{corollary}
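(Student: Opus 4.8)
The plan is to specialize the drift-and-minorization machinery already assembled for A\&C's chain to the intercept-only model and then push the drift parameter through the known asymptotics of~$\hat{p}_n$. First I would evaluate the parameters of Propositions~\ref{prop:driftmin-n} and~\ref{prop:lambdasimple} when $p=1$, $X_i \equiv 1$, and $v=0$. Here $\Sigma = n+Q$, the two orthants of $\mathbb{R}$ are $(0,\infty)$ and $(-\infty,0)$, and a direct count gives $W(S_1) = n(1-\hat{p}_n)$ and $W(S_2) = n\hat{p}_n$. Feeding these into Proposition~\ref{prop:lambdasimple} and using the statement following it that one may replace $\lambda$ by the square of that bound, the drift condition of Proposition~\ref{prop:driftmin-n} holds with $\lambda(\mathcal{D}_n) = [\,1 - \tfrac{2}{\pi}\{\hat{p}_n \wedge (1-\hat{p}_n)\}\,]^2$, together with $L(\mathcal{D}_n) = 1 + \lambda(\mathcal{D}_n)$ and minorization number $\varepsilon = 2^{-1/2} e^{-d}$ for any $d > 2L/(1-\lambda)$ --- exactly the parameters named in the corollary.

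Next I would carry out the limiting argument on~$\lambda$. Setting $\kappa := \liminf_n \hat{p}_n \wedge (1 - \limsup_n \hat{p}_n)$, the hypothesis forces $\kappa > 0$. The elementary bound $\liminf_n [\hat{p}_n \wedge (1-\hat{p}_n)] \ge \liminf_n \hat{p}_n \wedge \liminf_n (1 - \hat{p}_n) = \kappa$, combined with the fact that $x \mapsto [1 - \tfrac{2}{\pi}x]^2$ is strictly decreasing on $(0,\tfrac12]$, yields $\limsup_n \lambda(\mathcal{D}_n) \le [1 - \tfrac{2}{\pi}\kappa]^2$. For a fixed $\delta$ in the stated interval, the corollary's drift value $\lambda_0 := [1-\tfrac{2}{\pi}\kappa]^2 + \delta$ satisfies $\lambda_0 < 1$ (this is precisely the constraint placed on $\delta$), and $\lambda(\mathcal{D}_n) < \lambda_0$ for all sufficiently large~$n$.

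The third step transfers this to the rate bound. Put $L_0 = 1 + \lambda_0$, fix a threshold $d_0 > 2L_0/(1-\lambda_0)$, and fix $r \in (0,1)$ making $\hat{\rho}$ from~\eqref{eq:ros_bound}, evaluated at $(\lambda_0, L_0, 2^{-1/2} e^{-d_0})$, equal to some $\rho < 1$; such $r$ exists since $\lambda_0 < 1$, $L_0 < \infty$, and $2^{-1/2} e^{-d_0} > 0$. For all large $n$ one has $\lambda(\mathcal{D}_n) \le \lambda_0$, hence $L(\mathcal{D}_n) \le L_0$ and $2L(\mathcal{D}_n)/(1-\lambda(\mathcal{D}_n)) \le 2L_0/(1-\lambda_0) < d_0$, so the single threshold $d_0$ remains admissible and the minorization number stays $2^{-1/2} e^{-d_0}$. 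Because $\hat{\rho}$ in~\eqref{eq:ros_bound} is nondecreasing in $\lambda$ and $L$ when $d$, $r$, and $\varepsilon$ are held fixed, we get $\hat{\rho}(\mathcal{D}_n) \le \rho$ eventually; since $\hat{\rho}(\mathcal{D}_n)$ is an upper bound on $\rho_*(\mathcal{D}_n)$ (Proposition~\ref{prop:driftmin-n} applied through the flipped-chain identity together with Theorem~\ref{thm:rosen}), this gives $\limsup_n \rho_*(\mathcal{D}_n) \le \rho$.

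Finally, for the $L^2$ rate I would invoke reversibility: A\&C's chain is the $B$-marginal of a two-block Gibbs sampler, hence reversible with respect to the posterior, so by \citet{roberts2001geometric} its total-variation and $L^2$ geometric convergence rates coincide, whence $\rho_{**}(\mathcal{D}_n) = \rho_*(\mathcal{D}_n)$ and the same $\limsup$ bound follows. The main obstacle I anticipate is the bookkeeping in the third step: one must verify that a single pair $(d_0,r)$ serves uniformly for all large $n$ and that the admissibility constraint $d > 2L/(1-\lambda)$ survives the inequality $\lambda(\mathcal{D}_n) \le \lambda_0$, so that the monotonicity of $\hat{\rho}$ may legitimately be used to pass from the limiting parameters to the chain-specific ones.
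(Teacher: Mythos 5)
Your proposal is correct, and its $\rho_*$ half is essentially the paper's own argument: the specialization $W(S_1)=n(1-\hat{p}_n)$, $W(S_2)=n\hat{p}_n$, giving $\lambda(\mathcal{D}_n) \le \left[1-\tfrac{2}{\pi}\{\hat{p}_n \wedge (1-\hat{p}_n)\}\right]^2$ with $L=1+\lambda$ and $\varepsilon = 2^{-1/2}e^{-d}$, appears verbatim in the discussion preceding the corollary (the paper's proof opens by saying the $\rho_*$ case "has already been provided"), and your careful bookkeeping showing that a single pair $(d_0,r)$ remains admissible for all large $n$, with $\hat{\rho}$ in \eqref{eq:ros_bound} monotone in $(\lambda, L)$, is exactly what the paper's phrase "$\rho$ depends only on $c_1 \wedge (1-c_2)$" compresses. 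Where you genuinely diverge is the $\rho_{**}$ half. The paper proves it directly: it applies Proposition~\ref{prop:tvv1} to get the pointwise total variation bound $H(\beta)\,\rho^{m-1}$ for all large $n$, integrates over an initial measure $\nu \in L^2(\Pi)$, and verifies $\int_{\mathbb{R}} H(\beta)\,\nu(\df\beta) < \infty$ by observing that $H$ has polynomial growth, that the posterior has finite polynomial moments by Theorem~2.3 of \citet{chen2000propriety}, and then applying Cauchy--Schwarz; this yields an explicit constant $M_\nu$ for each $\nu$, in keeping with the paper's emphasis on computable bounds. You instead invoke reversibility of the data augmentation chain together with the rate equivalence of \citet{roberts2001geometric}. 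That route is legitimate and shorter, with one caveat: your claim that $\rho_{**}(\mathcal{D}_n) = \rho_*(\mathcal{D}_n)$ overstates what the reversibility results deliver under this paper's definition of $\rho_*$, which requires \eqref{eq:ge} to hold for \emph{every} $x$ rather than $\Pi$-almost every $x$; the equivalence theorems for reversible chains give $\rho_{**} \le \rho_*$ (the $L^2$ rate is controlled by the almost-everywhere total variation rate, hence by the all-$x$ rate), and no converse controlling pointwise convergence from every state is available. Since your transfer step only uses the inequality $\rho_{**} \le \rho_*$, the argument stands once the equality is weakened accordingly; what you lose relative to the paper is the explicit form of $M_\nu$, and what you gain is independence from the moment computation for $H$.
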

\begin{proof}
	It suffices to prove the result for $\rho_{**}$, since the
	argument for $\rho_{*}$ has already been provided.  Fix an arbitrary~$\delta$. By Proposition~\ref{prop:tvv1}, when~$n$ is
	sufficiently large, for any $\beta \in \mathbb{R}$ and $m \geq
	1$,
	\[
	\int_{\mathbb{R}}
	\big|k^{(m)}_{\mbox{\scriptsize{AC}}}(\beta,\beta';Y,X) -
	\pi_{B|Y,X}(\beta'|Y,X) \big| \, \df \beta' \le H(\beta) \,
	\rho^{m-1} \;,
	\]
	where $H(\beta)$ is given in the said proposition.  Let $\Pi$
	be the probability measure corresponding to the posterior
	density, $\pi_{B|Y,X}(\beta|Y,X)$.  Then for any probability
	measure $\nu \in L^2(\Pi)$ and $m \geq 1$,
	\begin{align*}
	\int_{\mathbb{R}} \bigg|\int_{\mathbb{R}} 
	k^{(m)}_{\mbox{\scriptsize{AC}}}&(\beta,\beta';Y,X) \nu(\df \beta) - 
	\pi_{B|Y,X}(\beta'|Y,X) \bigg| \, \df \beta' \\ & \leq
	\int_{\mathbb{R}} \int_{\mathbb{R}} \big|
	k^{(m)}_{\mbox{\scriptsize{AC}}}(\beta,\beta';Y,X) -
	\pi_{B|Y,X}(\beta'|Y,X) \big| \, \df \beta' \, \nu(\df \beta)
	\\ & \leq \left( \int_{\mathbb{R}} H(\beta) \, \nu(\df \beta)
	\right) \rho^{m-1} \;.
	\end{align*}
	One can verify that $H(\beta)$ can be bounded by polynomial functions. As a result, by Theorem 2.3 in \cite{chen2000propriety}, $\int_{\mathbb{R}} H^2(\beta) \pi_{B|Y,X}(\beta|Y,X) \, \df \beta < \infty$. Then by
	Cauchy-Schwarz, $\int_{\mathbb{R}} H(\beta) \nu(\df\beta)  < \infty$. Therefore, $\rho_{**}(\mathcal{D}_n) \leq
	\rho$ for all sufficiently large~$n\;$.
\end{proof}

As mentioned previously, the convergence rate analyses of
\citet{roy2007convergence} and \citet{chakraborty2016convergence},
which establish the geometric ergodicity of the A\&C chain for fixed
$n$ and $p$, are based on the \textit{un-centered} drift function, $V_0$.  We
end this subsection with a result showing that, while this un-centered
drift may be adequate for \textit{non-asymptotic} results, it simply
does not match the dynamics of the A\&C chain well enough to get a
result like Theorem~\ref{thm:stable-n}.  The following result is
proven is Subsection~\ref{app:instability}.

\begin{proposition}
	\label{prop:instability}
	Assume that (A1) and (A2) hold, and that there exists $\beta_* \in
	\mathbb{R}^p$ such that $\beta_* \neq 0$ and $G(\gamma) =
	G_*(\gamma^T \beta_*)$ for all $\gamma \in \mathbb{R}^p$, where
	$G_*:\mathbb{R} \to (0,1)$ is a strictly increasing function such
	that $G_*(0)=1/2$.  Then, almost surely, any drift and minorization
	based on $V_0(\beta) = \norm{\Sigma^{1/2} \beta}^2$ is
	necessarily unstable in~$n$.
\end{proposition}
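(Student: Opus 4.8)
The plan is to reduce everything to a single fact about the posterior, namely that, almost surely, $\Pi_n V_0 := \int_{\mathbb{R}^p} V_0(\beta)\,\pi_{B|Y,X}(\beta|Y,X)\,\df\beta \to \infty$ as $n \to \infty$, where $\Pi_n$ denotes the posterior induced by $\mathcal{D}_n$. Granting this, instability follows from two structural observations that hold for \emph{any} admissible d\&m based on $V_0$. First, integrating a drift inequality~\eqref{ine:drift} for $V_0$ against the invariant measure $\Pi_n$ and using invariance (legitimate since the probit posterior has finite second moments) gives $(1-\lambda)\Pi_n V_0 \le L$, so $L/(1-\lambda) \ge \Pi_n V_0$. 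Because the minorization~\eqref{ine: min} must hold on a small set $\{V_0 < d\}$ with $d > 2L/(1-\lambda) \ge 2\,\Pi_n V_0$, we conclude $d_n \to \infty$ for every admissible scheme. Second, the minorization number is bounded by the overlap of the one-step laws: if $K(\beta_1,\cdot),K(\beta_2,\cdot)\ge\varepsilon Q(\cdot)$ for $\beta_1,\beta_2$ in the small set, then $\varepsilon \le 1 - \norm{K(\beta_1,\cdot)-K(\beta_2,\cdot)}_{\mbox{\tiny{TV}}}$. Thus exhibiting two points of the (expanding) small set whose transition laws are asymptotically singular forces $\varepsilon_n \to 0$, which is condition~(iii) of Proposition~\ref{prop:stability}, hence instability.

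The heart is therefore proving $\Pi_n V_0 \to \infty$, and the mechanism is that the posterior concentrates \emph{away} from the origin, where $V_0$ is minimized. I would first argue that the posterior mode $\hat B$ (and the posterior mean) converge almost surely to the unique solution $\beta_0$ of the limiting score equation obtained from~\eqref{eq:fixedeqn2} by dividing by $n$, discarding the vanishing prior term $Q/n$, and applying the SLLN:
\[
\mathbb{E}\!\left[ X_1 \left( \frac{\phi(X_1^T\beta)}{\Phi(X_1^T\beta)} G(X_1) - \frac{\phi(X_1^T\beta)}{1-\Phi(X_1^T\beta)} \big(1-G(X_1)\big) \right) \right] = 0 \;,
\]
the root being unique by strict concavity of the limiting objective. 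Evaluating the left side at $\beta = 0$ and using $\Phi(0)=1/2$, it reduces to $\sqrt{2/\pi}\,\mathbb{E}[X_1(2G_*(X_1^T\beta_*)-1)]$. Dotting with $\beta_*$ and using that $t\mapsto t(2G_*(t)-1)$ is nonnegative and vanishes only at $t=0$ (from $G_*$ strictly increasing with $G_*(0)=1/2$), together with $\mathbb{E}(X_1^T\beta_*)^2 = \beta_*^T(\mathbb{E}X_1X_1^T)\beta_* > 0$ by (A2) and $\beta_* \ne 0$, yields a strictly positive value. Hence $\beta=0$ is not the root and $\beta_0 \ne 0$. Combining posterior concentration at $\beta_0$ with $\Sigma/n \to \mathbb{E}X_1X_1^T$ (positive definite by (A2)) and Jensen's inequality gives $\Pi_n V_0 = \mathbb{E}_{\Pi_n}[\beta^T\Sigma\beta] \ge \lambda_{\min}(\Sigma)\,\norm{\mathbb{E}_{\Pi_n}\beta}^2 \gtrsim n\,\lambda_{\min}(\mathbb{E}X_1X_1^T)\,\norm{\beta_0}^2 \to \infty$.

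To complete the argument I would produce the two separated points explicitly. Take $\beta_1^{(n)} = \beta_0$ and $\beta_2^{(n)} = \beta_0 + \delta u$ for a fixed small $\delta > 0$ and unit vector $u$; both lie in $\{V_0 < d_n\}$ for large $n$ since $V_0(\cdot)/n$ stays near $\beta_0^T(\mathbb{E}X_1X_1^T)\beta_0$ while $d_n/n \gtrsim 2\beta_0^T(\mathbb{E}X_1X_1^T)\beta_0$. By the SLLN the one-step mean $\mathbb{E}(B_{m+1}\mid\beta) = \Sigma^{-1}(X^T\varphi(\beta)+Qv)$ converges to a limit $\mu(\beta)$, with $\mu(\beta_0)=\beta_0$ (precisely the fixed-point identity of Proposition~\ref{fixedmle}, equation~\eqref{eq:fixed point}, in the limit) and $\mu(\beta_0 + \delta u)\ne\beta_0$ for suitable $u$ and small $\delta$ because $\beta_0$ is the unique root; meanwhile $\mathrm{var}(B_{m+1}\mid\beta) \preceq 2\Sigma^{-1}\to 0$. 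Consequently $K(\beta_1^{(n)},\cdot)$ and $K(\beta_2^{(n)},\cdot)$ concentrate (Chebyshev) at distinct limits, so $\norm{K(\beta_1^{(n)},\cdot)-K(\beta_2^{(n)},\cdot)}_{\mbox{\tiny{TV}}}\to 1$ and $\varepsilon_n \to 0$, giving instability.

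The hard part will be the posterior-concentration step, since $G_*$ need not equal $\Phi$, so the model may be misspecified: rigorously establishing that $\hat B$ and the posterior mean converge to the unique root $\beta_0$ (and that $\Pi_n V_0/n$ converges accordingly) requires a uniform SLLN / M-estimation argument, which is the technical core I would relegate to the appendix. A secondary point demanding care is the total-variation separation for the non-Gaussian mixture laws $K(\beta,\cdot)$, but this is dispatched by the uniform variance bound $\mathrm{var}(B_{m+1}\mid\beta)\preceq 2\Sigma^{-1}\to 0$ noted above.
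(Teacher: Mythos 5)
Your overall architecture is sound and your endgame coincides with the paper's: both arguments force the small set $\{V_0 < d\}$ to swell until it contains two points whose one-step transition laws have means that stay separated while the conditional variances shrink like $\mathrm{var}(B_2 \mid B_1 = \beta) \preceq 2(X^TX)^{-1} = O(n^{-1})$, whence Chebyshev makes the overlap, and hence $\varepsilon$, vanish. The difference is in how the small set is forced to grow, and here your route has a genuine gap. You derive $d_n \to \infty$ from $(1-\lambda)\Pi_n V_0 \le L$ together with the claim $\Pi_n V_0 \to \infty$, which in turn rests on almost-sure posterior concentration (of the mean, not just the mode) at the unique root $\beta_0 \ne 0$ of the limiting score equation under a possibly misspecified model ($G_*$ need not be $\Phi$). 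You explicitly defer this M-estimation/uniform-SLLN step, but it is not a routine appendix item: you need existence of the root (a coercivity argument along rays, using (A2)), a.s.\ uniform convergence on compacts, and enough uniform integrability to move from mode convergence to convergence of $\mathbb{E}_{\Pi_n}\beta$ and of $\Pi_n V_0/n$ itself. Moreover, your small-set containment for $\beta_0 + \delta u$ requires $d_n/n \gtrsim 2\beta_0^T \mathbb{E}(X_1X_1^T)\beta_0$, which does \emph{not} follow from your Jensen bound $\Pi_n V_0 \ge \lambda_{\min}(\Sigma)\norm{\mathbb{E}_{\Pi_n}\beta}^2$ when $\mathbb{E}(X_1X_1^T)$ has condition number exceeding $2$; it needs the full concentration statement $\Pi_n V_0/n \to \beta_0^T\mathbb{E}(X_1X_1^T)\beta_0$, i.e., exactly the unproven core. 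A secondary flaw: your justification that $\mu(\beta_0+\delta u) \ne \beta_0$ ``because $\beta_0$ is the unique root'' conflates roots of the score $s(\beta)$ with fixed points of the mean map $\mu(\beta) = \beta + M^{-1}s(\beta)$; these are not the same condition, though this is repairable via nonsingularity of $\mu'(\beta_0) = M^{-1}\mathbb{E}\{X_1X_1^T w(X_1)\}$ with $w \in (0,1)$ from Lemma~\ref{lem:horrace}.

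The paper sidesteps all of this with an elementary move you should note: it plugs $\beta = 0$ directly into the assumed drift inequality. Since $V_0(0) = 0$, this gives $L \ge \int V_0(\beta')\,k_{\mbox{\scriptsize{AC}}}(0,\beta')\,\df\beta' \ge n\,\bar{t}_n^T (X^TX/n)^{-1} \bar{t}_n$ with $\bar{t}_n = n^{-1}\sum_i X_i\mathbb{E}(Z_i|B=0,Y,X)$, and the plain SLLN plus exactly your computation $\sqrt{2/\pi}\,\mathbb{E}\{(2G_*(X_1^T\beta_*)-1)X_1\} \ne 0$ yields $L > cn$ almost surely --- no posterior concentration, no existence or uniqueness of $\beta_0$, no misspecified M-estimation theory. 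Then $d > 2L$ means the small set eventually contains a \emph{fixed} ball about the origin, and the paper takes the pair $\pm\gamma$ there; the mean separation is explicit because $\mathbb{E}(Z_i|B=\gamma,Y,X) - \mathbb{E}(Z_i|B=-\gamma,Y,X) = 2X_i^T\gamma + \phi(X_i^T\gamma)/\Phi(X_i^T\gamma) - \phi(X_i^T\gamma)/(1-\Phi(X_i^T\gamma))$ is free of $Y_i$, so the SLLN gives a nonzero limit directly. If you replace your concentration step by this drift-at-the-origin computation (or simply borrow the paper's $\pm\gamma$ pair, which also cures the $\mu$-fixed-point issue), your proof closes; as written, the heart of it is assumed rather than proven.
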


\begin{remark}
	In the above proposition, if $G_*(\theta) = \Phi(\theta)$ for all
	$\theta \in \mathbb{R}$, then the probit model is correctly
	specified, and the true parameter is $\beta_*$.
\end{remark}

\subsection{Large $p$, small $n$}
\label{sec:lpsn}

In this subsection, we consider the case where $n$ is fixed and $p$
grows.  In contrast with the strategy of the previous subsection, here
we consider a \textit{deterministic} sequence of data sets.  Also,
since $p$ is changing, we need to specify a sequence of prior
parameters $\{(Q_p,v_p)\}_{p=1}^\infty$.  Let $\mathcal{D}_p =
(v_p,Q_p,X_{n \times p},Y)$, $p \geq 1$, denote a sequence of priors and data sets, where
$Y$ is a fixed $n \times 1$ vector of responses, $X_{n \times p}$ is an $n \times
p$ matrix, $v_p$ is a $p \times 1$ vector, and $Q_p$ is a $p \times p$
positive definite matrix.  (Note that positive definite-ness of $Q_p$
is required for posterior propriety.)  So, each time $p$ increases by
1, we are are given a new $n \times 1$ column vector to add to the
current design matrix.  For the rest of this subsection, we omit the~$p$ and $n \times p$ subscripts.  We also assume that the following conditions hold for
all $p$:
\begin{enumerate}
	\item[($B1$)] $X$ has full row rank;
	\item[($B2$)] There exists a finite, positive constant, $c$, such
	that $\lambda_{\max}(X Q^{-1} X^T) < c$ \;.
\end{enumerate}

Assumption (B1) is equivalent to $X \Sigma^{-1} X^T$ being
non-singular.  Assumption (B2) regulates the eigenvalues of the prior
variance, $Q^{-1}$.  More specifically, it requires that the prior
drives~$B$ towards~$v$.  For illustration, if $X = (1 \, 1 \cdots 1)$,
then (B2) holds if for some $\tau > 0$, $Q^{-1} = \mathrm{diag}
(\tau/p,\tau/p,\dots,\tau/p)$, or $Q^{-1} = \mathrm{diag}
(\tau,\tau/2^2,\dots,\tau/p^2)$.  Assumption (B2) is satisfied by the
generalized $g$-priors used by, e.g., \citet{gupta2007variable},
\citet{ai2009bayesian}, and \citet{baragatti2012study}.  It can be
shown \citep[see, e.g.,][]{chakraborty2016convergence} that (B2) is
equivalent to the existence of a constant $c < 1$ such that
\[
\lambda_{\max} \left( X \Sigma^{-1} X^T \right) < c \;.
\]
While (B2) may seem like a strong assumption, we will provide some
evidence later in this subsection suggesting that it may actually be
necessary.  Here is our main result concerning the large $p$, small
$n$ case.

\begin{theorem}
	\label{thm:stable-p}
	If (B1) and (B2) hold, then there exists a constant $\rho < 1$ such
	that $\rho_*(\mathcal{D}_p) \leq \rho$ for all~$p$.
\end{theorem}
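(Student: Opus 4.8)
The plan is to carry out the entire argument on the flipped chain $\check{k}_{\mbox{\scriptsize{AC}}}$ introduced in Subsection~\ref{ssec:V_2}, exploiting the crucial fact that this chain lives on the \emph{fixed}-dimensional space $\mathbb{R}^n$ even as $p \to \infty$. Since the Markov chains driven by $k_{\mbox{\scriptsize{AC}}}$ and $\check{k}_{\mbox{\scriptsize{AC}}}$ share the same geometric convergence rate, it suffices to produce an upper bound $\hat{\rho}$ on the rate of the flipped chain that is bounded away from~$1$ uniformly in~$p$. Assumption (B1) guarantees that $X\Sigma^{-1}X^T$ is non-singular, so the drift function $\check{V}_2$ and hence the flipped chain are well-defined for every~$p$.

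First I would invoke Proposition~\ref{prop:driftmin-p}, which supplies d\&m conditions for $\check{k}_{\mbox{\scriptsize{AC}}}$ with parameters $\lambda = \lambda^2_{\max}(X\Sigma^{-1}X^T)$, $L = n(1+\lambda)$, and $\varepsilon = 2^{-n/2}e^{-d}$ for any $d > 2L/(1-\lambda)$. The key point is that all three of these quantities are controlled by (B2) together with the fact that $n$ is held fixed. By the equivalent formulation of (B2) recorded just before the theorem, there is a constant $c < 1$, independent of~$p$, with $\lambda_{\max}(X\Sigma^{-1}X^T) < c$, so that $\lambda < c^2 < 1$ for every~$p$. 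This delivers drift stability in the sense of Subsection~\ref{sec:d_m}: $\lambda$ is bounded away from~$1$, and, because $n$ is fixed, $L = n(1+\lambda) < 2n$ is bounded above. Consequently the threshold $2L/(1-\lambda) = 2n(1+\lambda)/(1-\lambda)$ is itself bounded above by the $p$-free quantity $2n(1+c^2)/(1-c^2)$.

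The remaining step is to secure minorization stability by choosing the small-set radius uniformly in~$p$. Since $2L/(1-\lambda)$ is uniformly bounded, I would fix a single $d$ exceeding $2n(1+c^2)/(1-c^2)$ once and for all; this $d$ is then admissible in Proposition~\ref{prop:driftmin-p} for every~$p$, and $\varepsilon = 2^{-n/2}e^{-d}$ is a fixed positive constant, hence bounded away from~$0$. With $\lambda$, $L$, and $\varepsilon$ all stable (conditions $(\mathrm{i}')$--$(\mathrm{iii}')$ of Subsection~\ref{sec:d_m}), substituting them into~\eqref{eq:ros_bound} with any fixed $r \in (0,1)$ yields a bound $\hat{\rho}$ that is strictly below~$1$ and independent of~$p$. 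Setting $\rho = \hat{\rho}$ and using the shared convergence rate of $k_{\mbox{\scriptsize{AC}}}$ and $\check{k}_{\mbox{\scriptsize{AC}}}$ gives $\rho_*(\mathcal{D}_p) \le \rho < 1$ for all~$p$.

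Because the dimension-reduction device has already done the heavy lifting --- it is precisely what prevents $\varepsilon$ from collapsing as the dimension of the \emph{original} state space $\mathbb{R}^p$ grows --- there is no genuine analytic obstacle remaining. The only thing to watch carefully is that the radius $d$, and therefore $\varepsilon$, can honestly be chosen once for all~$p$; this is exactly what the uniform upper bound on $2L/(1-\lambda)$ furnished by (B2) and the fixed sample size~$n$ guarantee.
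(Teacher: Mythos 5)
Your proposal is correct and takes essentially the same route as the paper's own proof: the paper likewise reduces to the flipped chain on $\mathbb{R}^n$ via Proposition~\ref{prop:driftmin-p}, notes that (B2) (in its equivalent form) bounds $\lambda = \lambda^2_{\max}\big(X\Sigma^{-1}X^T\big)$ away from~$1$ uniformly in~$p$, and uses the fact that fixed~$n$ then controls $L$, $d$, and $\varepsilon$, so that \eqref{eq:ros_bound} delivers a $p$-free $\hat{\rho} < 1$. One small nit: it is not true that \emph{any} fixed $r \in (0,1)$ makes $\hat{\rho} < 1$ in \eqref{eq:ros_bound} --- the factor $\{1 + 2(\lambda d + L)\}^r$ exceeds~$1$, so $r$ must be taken sufficiently small that the second term stays below~$1$ --- but since your $\lambda$, $L$, $d$, and $\varepsilon$ are all independent of~$p$, such an $r$ can be fixed once and for all, and the conclusion stands.
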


\begin{proof}
	The proof is based on Proposition~\ref{prop:driftmin-p}.  Indeed, as
	in the proof of Theorem~\ref{thm:stable-n}, it suffices to show that
	there exists a $c<1$ such that 
	\[\lambda(\mathcal{D}_p) =
	\lambda^2_{\max} \big ( X \Sigma^{-1} X^T \big ) < c\] 
	for all $p$.
	But this follows immediately from (B2).
\end{proof}

An important feature of Theorem~\ref{thm:stable-p} is that it holds
for any sequence of prior means, $\{v_p\}_{p=1}^\infty$.  This is
achieved by adopting a drift function that is centered around a point
that adapts to the prior mean.  Although Gaussian priors with
non-vanishing means are not commonly used in practice, it is
interesting to see that Albert and Chib's algorithm can be robust
under location shifts in the prior, even when the dimension of the
state space is high.

The following result, which is proven in Subsection~\ref{app:b2nec},
shows that (B2) is not an unreasonable assumption.

\begin{proposition}
	\label{prop:b2nec}
	If $n=1$ and $v=0$, then as $X \Sigma^{-1}X^T$ tends to~$1$,
	\[
	1-\rho_{**} = O\big( 1 - X \Sigma^{-1}X^T \big) \,.
	\]
	In particular,~$\rho_{**}$ is not bounded away from~$1$ if (B2) does not
	hold.
\end{proposition}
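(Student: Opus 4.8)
The plan is to pass to the one-dimensional marginal chain and bound its $L^2$-rate from below with a single explicit test function. Since $n=1$, the ``flipped'' chain $\check{k}_{\mbox{\scriptsize{AC}}}$ of Subsection~\ref{ssec:V_2} lives on $\mathbb{R}$. Being the $Z$-marginal of the two-block Gibbs sampler, its operator $\check{K}$ on $L^2(\Pi_Z)$ satisfies $\check{K}=F^*F$, where $F\colon f\mapsto \mathbb{E}[f(Z)\mid B=\cdot]$ is a conditional-expectation operator; hence $\check{K}$ is positive and self-adjoint, and it shares the rate $\rho_{**}$ with the A\&C chain \citep[see, e.g.,][]{diaconis2008gibbs,roberts2001markov}. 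For such an operator $\rho_{**}$ equals the operator norm on mean-zero functions \citep{roberts1997geometric}, so for any $f\in L^2_0(\Pi_Z)$ one has $\rho_{**}\ge \langle \check{K}f,f\rangle/\langle f,f\rangle$, which for $f(z)=z-\mu$ is exactly the lag-one autocorrelation $\mathrm{Corr}_\Pi(Z_0,Z_1)$ under stationarity. Note that everything below depends on $(X,Q)$ only through the scalar $s:=X\Sigma^{-1}X^T$, so the dimension $p$ is irrelevant and the problem is genuinely scalar.

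First I would identify the stationary law of the $Z$-chain. Taking $Y=1$ (the case $Y=0$ is symmetric), the augmented posterior is proportional to $1_{(0,\infty)}(z)\,\phi(z-X\beta)\,\exp(-\tfrac12\beta^TQ\beta)$, and integrating out $\beta$ shows that $Z$ is $\mathrm{N}(0,1+r)$ truncated to $(0,\infty)$, where $r:=XQ^{-1}X^T$. A rank-one Sherman--Morrison computation gives $s=r/(1+r)$, so the stationary scale parameter is $\sigma^2:=1+r=1/(1-s)$, which blows up as $s\to1$. Thus $Z$ is half-normal with $\mu:=\mathbb{E}Z=\sigma\sqrt{2/\pi}$ and $\mathrm{Var}(Z)=\sigma^2(1-2/\pi)$, and the key identity is that $(1-s)\,\mathrm{Var}(Z)=1-2/\pi$ is constant in $s$.

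Next I would compute the autocorrelation. Conditional on $Z_0=z$, the chain draws $\theta=XB\sim \mathrm{N}(sz,s)$ and then $Z_1\sim\mbox{TN}(\theta,1;1)$, so $\mathbb{E}(Z_1\mid z)=sz+g(z)$, where $g(z):=\mathbb{E}_{\theta\sim \mathrm{N}(sz,s)}[\phi(\theta)/\Phi(\theta)]$ is the averaged inverse Mills ratio. Substituting and using $\int (z-\mu)\,\Pi(\df z)=0$ yields
\[
\mathrm{Corr}_\Pi(Z_0,Z_1)=s+\frac{\mathbb{E}[(Z-\mu)\,g(Z)]}{\mathrm{Var}(Z)}\;.
\]
Since $\theta$ is stochastically increasing in $z$ and $\phi/\Phi$ is nonincreasing, $g$ is nonincreasing, so the correction term is nonpositive. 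It therefore remains to show that $\big|\mathbb{E}[(Z-\mu)g(Z)]\big|$ stays bounded as $s\to1$; combined with $(1-s)\mathrm{Var}(Z)=1-2/\pi$, this gives $\big|\mathbb{E}[(Z-\mu)g(Z)]/\mathrm{Var}(Z)\big|=O(1-s)$, whence $\rho_{**}\ge \mathrm{Corr}_\Pi(Z_0,Z_1)\ge 1-O(1-s)$, i.e. $1-\rho_{**}=O(1-s)$.

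The main obstacle is the uniform bound on $\big|\mathbb{E}[(Z-\mu)g(Z)]\big|$. The mechanism is that $\phi(\theta)/\Phi(\theta)$ is $O(1)$ near the origin and decays super-exponentially as $\theta\to+\infty$, so $g(z)$ is appreciable only on $z=O(1)$, precisely where the half-normal density is $O(1/\sigma)$ and $|z-\mu|\le\mu=O(\sigma)$; the product integrates to $O(1)$ there, while the contribution from large $z$ (which comes only from the rare event $\theta<0$, where $\phi/\Phi$ grows merely linearly) is exponentially negligible. Making this precise---dominating $g$ by a fixed integrable envelope on $[0,\infty)$ and controlling its tail through the Gaussian tail of $\theta$---is the one genuinely technical step, everything else being exact Gaussian and truncated-Gaussian bookkeeping. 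Finally, if (B2) fails then $s=X\Sigma^{-1}X^T$ may be taken arbitrarily close to $1$, and $1-\rho_{**}=O(1-s)$ forces $\rho_{**}\to1$, so $\rho_{**}$ is not bounded away from~$1$.
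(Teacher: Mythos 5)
Your proposal is correct in substance and follows the same overall route as the paper: pass to the flipped $Z$-chain (valid since the flipped and original chains share their convergence rate), use reversibility and positivity of the data-augmentation operator to lower-bound $\rho_{**}$ by the stationary lag-one autocorrelation of $Z$ (the paper does exactly this via Roberts and Rosenthal's Theorem~2 and Liu, Wong and Kong's Lemma~2.3), and identify the stationary law as $\mathrm{TN}\big(0,(1-\psi)^{-1};1\big)$ with $\psi = X\Sigma^{-1}X^T$; your Sherman--Morrison identity $\sigma^2 = 1 + XQ^{-1}X^T = (1-\psi)^{-1}$ agrees with the paper's direct integration. Where you genuinely diverge is the covariance estimate, and your centered decomposition creates work the paper avoids. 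You write $\mathrm{Corr}(Z_0,Z_1) = s + \mathrm{cov}(Z, g(Z))/\mathrm{Var}(Z)$ with $s=\psi$, and must then prove $\big|\mathrm{cov}(Z,g(Z))\big| = O(1)$ uniformly as $s \to 1$, which you flag as ``the one genuinely technical step'' and only sketch. The paper instead works uncentered: since $XB\mid Z,Y,X \sim N(\psi Z,\psi)$, it writes $\mathbb{E}(ZZ'\mid Y,X) = \psi\,\mathbb{E}(Z^2\mid Y,X) + \mathbb{E}\big(Z\,\phi(XB)/\Phi(XB)\,\big|\,Y,X\big)$ and simply \emph{drops} the second term, which is nonnegative because $Z>0$ and the inverse Mills ratio is positive; this yields the closed-form bound $\gamma_0 \ge 1 - (1-\psi)/(1-2/\pi)$ with no asymptotic envelope analysis whatsoever. (The stationarity identity $\mathbb{E}\,g(Z) = (1-s)\mu$ shows your decomposition and the paper's differ by exactly this positive term, which is why your signed, centered version needs a magnitude bound while the paper's needs only a sign.) Your sketched bound does go through --- one can dominate $g(z)$ by $C(1+z)e^{-cz^2}$ uniformly for $s$ bounded away from $0$, while the half-normal density is $O(1/\sigma)$ and $\mu = O(\sigma)$, so the covariance is indeed $O(1)$ --- hence there is no fatal gap, but as written the crucial step is incomplete, and the paper's uncentered bookkeeping shows it is also unnecessary.
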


\vspace*{6mm}

\noindent {\bf \large Acknowledgment}. The second author's work was supported by NSF
Grant DMS-15-11945.

\vspace*{15mm}

\noindent {\LARGE \bf Appendix}
\begin{appendix}

\vspace*{-3mm}

	\section{Useful Results}
\label{app:results}

\subsection{Hermitian matrices}
\label{app:matrices}

For a Hermitian matrix $M$, let $\lambda_{\min}(M)$ and
$\lambda_{\max}(M)$ denote the smallest and largest eigenvalues of
$M$, respectively.  The following result is part of a famous result due
to H. Weyl \citep[see, e.g.,][Section 4.3]{horn2012matrix}.

\begin{lemma}
	\label{lem:weyl}[Weyl's Inequality]
	Let $M_1$ and $M_2$ be Hermitian matrices of the same size, then
	\begin{equation*}
	\lambda_{\max}(M_2) + \lambda_{\min}(M_1 - M_2) \leq
	\lambda_{\max}(M_1) \leq \lambda_{\max}(M_2) + \lambda_{\max}(M_1 -
	M_2) \;,
	\end{equation*}
	and
	\begin{equation*}
	\lambda_{\min}(M_2) + \lambda_{\min}(M_1 - M_2) \leq
	\lambda_{\min}(M_1) \leq \lambda_{\min}(M_2) + \lambda_{\max}(M_1 -
	M_2) \;.
	\end{equation*}
\end{lemma}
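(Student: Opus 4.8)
The plan is to derive all four inequalities from the variational (Rayleigh quotient) characterization of the extreme eigenvalues of a Hermitian matrix. Since $M_1$, $M_2$, and their difference $M_1 - M_2$ are all Hermitian of the same size, their eigenvalues are real, and for any such Hermitian $M$ we have $\lambda_{\max}(M) = \max_{\|x\|=1} x^* M x$ and $\lambda_{\min}(M) = \min_{\|x\|=1} x^* M x$, with the maximum (resp.\ minimum) attained at a corresponding unit eigenvector. I would record this characterization first, as it is the only tool needed.

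Next I would prove the two bounds on $\lambda_{\max}(M_1)$. Writing $M_1 = M_2 + (M_1 - M_2)$ and using that $x^* M_2 x \le \lambda_{\max}(M_2)$ and $x^*(M_1-M_2)x \le \lambda_{\max}(M_1-M_2)$ for every unit vector $x$, the additivity of these uniform bounds gives
\[
\lambda_{\max}(M_1) = \max_{\|x\|=1}\big\{ x^* M_2 x + x^*(M_1-M_2)x \big\} \le \lambda_{\max}(M_2) + \lambda_{\max}(M_1-M_2) \;,
\]
which is the right-hand inequality of the first display. For the matching left-hand inequality I would instead test the Rayleigh quotient of $M_1$ at a unit eigenvector $v$ of $M_2$ associated with $\lambda_{\max}(M_2)$, obtaining $\lambda_{\max}(M_1) \ge v^* M_1 v = \lambda_{\max}(M_2) + v^*(M_1-M_2)v \ge \lambda_{\max}(M_2) + \lambda_{\min}(M_1-M_2)$.

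Finally, rather than repeat the argument, I would obtain the two inequalities for $\lambda_{\min}(M_1)$ via the negation symmetry $\lambda_{\min}(M) = -\lambda_{\max}(-M)$ and $\lambda_{\max}(M) = -\lambda_{\min}(-M)$. Applying the already-proven pair to $-M_1$ and $-M_2$ (whose difference is $-(M_1-M_2)$) and multiplying through by $-1$ reverses the inequalities and interchanges each $\lambda_{\max}$ with a $\lambda_{\min}$, yielding exactly the second display.

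There is no genuine obstacle here; the proof is routine once the variational characterization is invoked. The only point requiring care is the bookkeeping in the final negation step---tracking how $\lambda_{\max}$ and $\lambda_{\min}$ swap and how the direction of each inequality flips---so I would verify that the signs line up with the stated form rather than attempt to reprove the min-version from scratch.
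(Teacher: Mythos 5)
Your proof is correct, and each of the four inequalities is handled soundly: the subadditivity bound for $\lambda_{\max}(M_1)$, the test-vector argument at a top eigenvector of $M_2$ for the lower bound, and the negation symmetry $\lambda_{\min}(M) = -\lambda_{\max}(-M)$ for the second display (I checked the sign bookkeeping in that last step; since $-M_1 - (-M_2) = -(M_1 - M_2)$, multiplying through by $-1$ does land exactly on the stated min-inequalities). Note, however, that the paper itself gives \emph{no} proof of this lemma: it is stated as a known special case of Weyl's theorem with a pointer to Section~4.3 of \citet{horn2012matrix}, so there is no internal argument to compare against. Relative to the cited source, your approach is the natural specialization: the full Weyl theorem bounds all interior eigenvalues $\lambda_{i+j-1}(M_2 + (M_1-M_2))$ and requires the Courant--Fischer min-max characterization, whereas for the four extreme-eigenvalue inequalities needed here the plain Rayleigh-quotient characterization $\lambda_{\max}(M) = \max_{\|x\|=1} x^*Mx$, $\lambda_{\min}(M) = \min_{\|x\|=1} x^*Mx$ suffices, which keeps your argument shorter and entirely elementary. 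That trade-off is exactly what one would want for a self-contained appendix proof, and nothing in the paper's use of the lemma (only $\lambda_{\min}$ and $\lambda_{\max}$ comparisons appear downstream) requires the stronger interior-eigenvalue statement.
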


Let $M_1$ and $M_2$ be non-negative definite matrices of the same
size.  It is well known that
\begin{equation*}
\lambda_{\max}(M_1M_2M_1) \leq \lambda_{\max}^2(M_1) \,
\lambda_{\max}(M_2) \;,
\end{equation*}
and
\begin{equation}
\label{eq:m2r}
\lambda_{\min}(M_1M_2M_1) \geq \lambda_{\min}^2(M_1) \,
\lambda_{\min}(M_2) \;.
\end{equation}

The following result follows immediately from Lemma~\ref{lem:weyl}.
\begin{lemma}
	\label{lem:matrixlimit}
	Let $\{M_j\}_{j=1}^\infty$ be a sequence of Hermitian matrices of
	the same size such that $M_j \rightarrow M$.  Then $M$ is Hermitian,
	$\lim_{j \to \infty} \lambda_{\max}(M_j) = \lambda_{\max}(M)$, and
	$\lim_{j \to \infty} \lambda_{\min}(M_j) = \lambda_{\min}(M)$.
\end{lemma}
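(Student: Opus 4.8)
The plan is to establish the three assertions in order, using Weyl's inequality (Lemma~\ref{lem:weyl}) as the only real tool. First I would dispose of the claim that $M$ is Hermitian. Since $M_j \to M$ entrywise (equivalently, in any matrix norm), and conjugate-transposition is a continuous operation, I would take limits on both sides of the identity $M_j = M_j^*$ to obtain $M^* = \lim_{j} M_j^* = \lim_{j} M_j = M$, so $M$ is Hermitian.

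Next, for the largest eigenvalue, I would apply Lemma~\ref{lem:weyl} with $M_1 = M_j$ and $M_2 = M$. The first chain of inequalities there reads
\[
\lambda_{\max}(M) + \lambda_{\min}(M_j - M) \le \lambda_{\max}(M_j) \le \lambda_{\max}(M) + \lambda_{\max}(M_j - M) \;,
\]
which rearranges to the two-sided bound
\[
\lambda_{\min}(M_j - M) \le \lambda_{\max}(M_j) - \lambda_{\max}(M) \le \lambda_{\max}(M_j - M) \;.
\]
The perturbation $M_j - M$ is Hermitian (a difference of Hermitian matrices), so its extreme eigenvalues are real and both bounded in modulus by its spectral norm. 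Because $M_j \to M$ and all norms on a finite-dimensional space are equivalent, $\norm{M_j - M} \to 0$, whence both $\lambda_{\min}(M_j - M)$ and $\lambda_{\max}(M_j - M)$ tend to $0$. The displayed squeeze then forces $\lambda_{\max}(M_j) \to \lambda_{\max}(M)$.

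The argument for the smallest eigenvalue is identical, now invoking the second chain in Lemma~\ref{lem:weyl}, which yields
\[
\lambda_{\min}(M_j - M) \le \lambda_{\min}(M_j) - \lambda_{\min}(M) \le \lambda_{\max}(M_j - M) \;,
\]
and the same two-sided estimate collapses to $\lambda_{\min}(M_j) \to \lambda_{\min}(M)$. I do not expect any genuine obstacle here; the lemma is essentially a corollary of Weyl's inequality. The only point that deserves to be stated carefully is the reduction from entrywise convergence of $M_j$ to $M$ to the vanishing of the extreme eigenvalues of the difference $M_j - M$, since it is precisely this fact that makes the Weyl bounds pinch $\lambda_{\max}(M_j)$ and $\lambda_{\min}(M_j)$ to their limiting values.
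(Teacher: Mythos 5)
Your proof is correct and takes the same route as the paper, which simply notes that the lemma ``follows immediately from Lemma~\ref{lem:weyl}''; you have merely spelled out the two-sided Weyl bounds, the continuity of conjugate transposition, and the squeeze via $\lambda_{\min}(M_j - M), \lambda_{\max}(M_j - M) \to 0$. No gaps.
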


\subsection{Truncated normal distributions}
\label{app:tn}

Fix $\theta \in \mathbb{R}$.  Let $Z_+ \sim \mbox{TN}(\theta,1;1)$ and
$Z_- \sim \mbox{TN}(\theta,1;0)$.  Then we have
\begin{equation}
\label{eq:evtns}
\mathbb{E}Z_+ = \theta +
\frac{\phi(\theta)}{\Phi(\theta)} \hspace*{5mm}
\mbox{and} \hspace*{5mm} \mathbb{E}Z_- = \theta -
\frac{\phi(\theta)}{1 - \Phi(\theta)} \;.
\end{equation}
Also,
\begin{equation}
\label{eq:vstns}
\mathrm{var}Z_+ = 1 - \frac{\theta \phi(\theta)}{\Phi(\theta)} -
\left( \frac{\phi(\theta)}{\Phi(\theta)} \right)^2 \hspace*{5mm}
\mbox{and} \hspace*{5mm} \mathrm{var}Z_- = 1 +
\frac{\theta\phi(\theta)}{1 - \Phi(\theta)} - \left(
\frac{\phi(\theta)}{1 - \Phi(\theta)} \right)^2 \;.
\end{equation}
Note that
\begin{equation*}
\frac{d\mathbb{E}Z_+}{d\theta} = \mathrm{var}Z_+ \hspace*{5mm}
\mbox{and} \hspace*{5mm} \frac{d\mathbb{E}Z_-}{d\theta} =
\mathrm{var}Z_- \;,
\end{equation*}
which shows that both expectations are increasing in $\theta$.  Recall
that
\[
g(\theta) = \frac{\theta\phi(\theta)}{\Phi(\theta)} +
\left(\frac{\phi(\theta)}{\Phi(\theta)}\right)^2 \;.
\]
Thus, we have the shorthand $\mathrm{var}Z_+ = 1-g(\theta)$ and
$\mathrm{var}Z_- = 1-g(-\theta)$.  The following result is Theorem~3
in \citet{horrace2015moments}.

\begin{lemma}[Horrace (2015)]
	\label{lem:horrace}
	The function $g(\theta)$ is decreasing in~$\theta$.  When $\theta <
	0$, $g(\theta) \in (2/\pi,1)$, and when $\theta > 0$, $g(\theta) \in
	(0,2/\pi)$. In particular, $\mathrm{var}Z_+, \mathrm{var}Z_- \in
	(0,1)$.
\end{lemma}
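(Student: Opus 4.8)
The plan is to route everything through the single function $g$ and reduce the lemma to one monotonicity statement. Write $r(\theta) = \phi(\theta)/\Phi(\theta) > 0$, and note from \eqref{eq:evtns} that $\mathbb{E}Z_+ = \theta + r(\theta)$, so that $g(\theta) = \theta r(\theta) + r(\theta)^2 = r(\theta)\,\mathbb{E}Z_+$. Since $Z_+$ is supported on $(0,\infty)$ its mean is strictly positive, and $r(\theta) > 0$; hence $g(\theta) > 0$ for every $\theta$. On the other hand $\mathrm{var}Z_+ = 1 - g(\theta)$ is the variance of a nondegenerate distribution, so it is positive and $g(\theta) < 1$. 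Thus $0 < g(\theta) < 1$ for all $\theta$, which already yields $\mathrm{var}Z_+ = 1 - g(\theta) \in (0,1)$ and $\mathrm{var}Z_- = 1 - g(-\theta) \in (0,1)$. A direct evaluation with $\phi(0) = 1/\sqrt{2\pi}$ and $\Phi(0) = 1/2$ gives $r(0) = \sqrt{2/\pi}$ and hence $g(0) = 2/\pi$.

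Given these facts, the two range assertions follow at once from strict monotonicity: if $g$ is strictly decreasing, then for $\theta < 0$ we get $2/\pi = g(0) < g(\theta) < 1$, and for $\theta > 0$ we get $0 < g(\theta) < g(0) = 2/\pi$. So the whole lemma reduces to showing $g'(\theta) < 0$ for all $\theta$, and I note that this does not require computing any limits as $\theta \to \pm\infty$. To compute $g'$, I first extract $r'$ from the identity $d\mathbb{E}Z_+/d\theta = \mathrm{var}Z_+$ recorded just after \eqref{eq:vstns}: differentiating $\mathbb{E}Z_+ = \theta + r$ gives $1 + r'(\theta) = \mathrm{var}Z_+ = 1 - g(\theta)$, so $r'(\theta) = -g(\theta) = -r(\theta)(\theta + r(\theta))$. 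Substituting this into $g' = r + r'(\theta + 2r)$ yields $g'(\theta) = r(\theta)\bigl[\,1 - (\theta + r(\theta))(\theta + 2r(\theta))\,\bigr]$. Since $r(\theta) > 0$, the claim $g'(\theta) < 0$ is equivalent to the single inequality $(\theta + r)(\theta + 2r) > 1$. Using $g = r\,\mathbb{E}Z_+$ and $\theta + 2r = \mathbb{E}Z_+ + r$, one has $(\theta+r)(\theta+2r) = (\mathbb{E}Z_+)^2 + g$, so this inequality is exactly $\mathrm{var}Z_+ < (\mathbb{E}Z_+)^2$.

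This last inequality is the heart of the matter, and I expect it to be the only real obstacle. I would clear it by a log-concavity argument rather than by grinding on special-function estimates. The density of $Z_+$ is proportional to $e^{-(z-\theta)^2/2}\,1_{(0,\infty)}(z)$, which is strictly log-concave on its support; consequently $Z_+$ has a strictly increasing hazard rate. For a nonnegative random variable with increasing hazard rate the squared coefficient of variation is at most $1$, with equality only in the exponential (constant-hazard) case; since $Z_+$ is strictly IFR it is not exponential, so $\mathrm{var}Z_+ < (\mathbb{E}Z_+)^2$ strictly, which is the required inequality. If one prefers a fully self-contained route, the same inequality follows from the sharp Mills-ratio bound $r(\theta) > \tfrac14\bigl(\sqrt{\theta^2+8} - 3\theta\bigr)$: this forces $\mathbb{E}Z_+ = \theta + r > \tfrac14\bigl(\theta + \sqrt{\theta^2+8}\bigr)$, the larger root of $2m^2 - \theta m - 1$, so that $2(\mathbb{E}Z_+)^2 - \theta\,\mathbb{E}Z_+ - 1 > 0$; and since $2m^2 - \theta m - 1 = m(\theta + 2r) - 1 = (\theta+r)(\theta+2r) - 1$, this is again $(\theta+r)(\theta+2r) > 1$.

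With $g'(\theta) < 0$ in hand, the monotonicity claim holds, the two range statements follow from the bracketing by $g(0) = 2/\pi$ together with the bounds $0 < g < 1$ established in the first paragraph, and $\mathrm{var}Z_+, \mathrm{var}Z_- \in (0,1)$ follows from $0 < g < 1$. All statements about $Z_-$ are obtained throughout by the substitution $\theta \mapsto -\theta$, using $\mathrm{var}Z_- = 1 - g(-\theta)$.
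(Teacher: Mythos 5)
Your proposal is correct, but it takes a genuinely different route from the paper for the simple reason that the paper contains no proof at all: Lemma~\ref{lem:horrace} is imported verbatim, with the paper stating only that it is Theorem~3 of Horrace (2015). Your argument checks out at every step. The bracketing $0 < g < 1$ follows exactly as you say from $g = r\,\mathbb{E}Z_+ > 0$ and $\mathrm{var}Z_+ = 1 - g > 0$ (via \eqref{eq:evtns} and \eqref{eq:vstns}), and $g(0) = 2/\pi$; the identity $r'(\theta) = -g(\theta)$ does follow from $d\mathbb{E}Z_+/d\theta = \mathrm{var}Z_+$ (or directly from $\phi' = -\theta\phi$), giving $g'(\theta) = r(\theta)\left[1 - (\theta+r)(\theta+2r)\right]$, so that the strict monotonicity needed for the open-interval range claims collapses to the single inequality $(\theta+r)(\theta+2r) > 1$, which you correctly identify as $\mathrm{var}Z_+ < (\mathbb{E}Z_+)^2$, i.e.\ coefficient of variation below one. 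Both of your ways of clearing this are sound: the log-concavity route invokes the classical Barlow--Proschan fact that an IFR nonnegative variable has squared coefficient of variation at most $1$, with equality only for the exponential (and the strictly log-concave truncated-normal density is strictly IFR, hence not exponential); the second route invokes Sampford's (1953) strict lower bound on the normal hazard, and your algebra is exact --- $\tfrac{1}{4}\bigl(\theta+\sqrt{\theta^2+8}\bigr)$ is the larger root of $2m^2 - \theta m - 1$, and $2m^2 - \theta m - 1 = (\theta+r)(\theta+2r) - 1$ at $m = \theta + r$; note moreover that for $\theta \ge 1$ the Sampford bound is nonpositive and the claim is trivial, so you only ever need the bound in the regime where it is classically established. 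What each approach buys: the paper's citation is the shortest path and keeps the exposition lean, whereas your argument makes the result self-contained up to two standard, citable facts and, more interestingly, reveals \emph{why} the lemma holds --- monotonicity of $g$ is precisely the statement that the one-sided truncated normal has coefficient of variation strictly less than one. The only nit is that your label ``fully self-contained'' for the Mills-ratio route slightly overstates matters, since Sampford's inequality is itself quoted rather than proved; it is, however, no less standard than the IFR fact used in your primary route.
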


\section{Proofs}
\label{app:proofs}

\subsection{Corollary~\ref{cor:tv_flip} via Proposition~\ref{prop:tv_flip}}
\label{app:tv_flip}

We begin by resetting the stage from Section~\ref{sec:d_m}.  Let
$\X \subset \mathbb{R}^p$, ${\Y} \subset \mathbb{R}^q$, and
let $\tilde{s}: \X \times {\Y} \rightarrow [0,\infty)$ and
$\tilde{h}: {\Y} \times \X \rightarrow [0,\infty)$ be the
conditional pdfs defined in Section~\ref{sec:d_m}.  Let
$\{\Psi_m\}_{m=0}^\infty$ be a Markov chain on $\X$ whose
Mtd is given by
\[
k(x,x') = \int_{\Y} \tilde{s} \left(x'|\gamma \right)
\tilde{h} \big(\gamma|x \big) \, \df \gamma \;,
\]
and let $\{\tilde{\Psi}_m\}_{m=0}^\infty$ be a Markov chain on ${\Y}$ whose Mtd is given by
\[
\tilde{k}(\gamma,\gamma') = \int_{\X} \tilde{h}
\big(\gamma'|x \big) \tilde{s} \left(x|\gamma \right) \, \df x \;.
\]
Let $K(x,\cdot)$ and~$\Pi$ denote the Mtf and invariant
probability measure for $\{\Psi_m\}_{m=0}^\infty$, respectively, and
define $\tilde{K}(\gamma,\cdot)$ and~$\tilde{\Pi}$ analogously.
Assume that both chains are Harris ergodic (i.e., irreducible,
aperiodic, and Harris recurrent).  The following result provides a
total variation distance connection between the two chains.

\begin{proposition}
	\label{prop:tv_flip}
	Suppose that there exists $R: \Y \times \mathbb{Z}_+ \to (0,\infty)$
	such that for $\tilde{\Psi}_0 \sim \tilde{\nu}$ and $m
	\geq 0$, we have
	\[
	\norm{\tilde{\nu} \tilde{K}^m (\cdot) -
		\tilde{\Pi} (\cdot)}_{\mbox{\tiny{TV}}} \leq \mathbb{E}_{\tilde{\nu}}
	R(\tilde{\Psi}_0, m) \;.
	\]
	Then for $\Psi_0 \sim \nu$ and $m \geq 1$, we have
	\[
	\norm{\nu K^m (\cdot) - \Pi (\cdot)}_{\mbox{\tiny{TV}}} \leq
	\mathbb{E}_{\nu} \left( \int_{\Y} R(\gamma, m-1)
	\tilde{h}(\gamma|\Psi_0) \, \df \gamma \right) \;.
	\]
\end{proposition}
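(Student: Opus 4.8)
The plan is to exploit the fact that $\{\Psi_m\}_{m=0}^\infty$ and $\{\tilde{\Psi}_m\}_{m=0}^\infty$ are the two marginal sequences of a single two-block (systematic-scan) Gibbs sampler on $\X \times \Y$ whose full conditionals are $\tilde{s}(x|\gamma)$ and $\tilde{h}(\gamma|x)$. Concretely, I would build the interleaved sequence $x_0,\gamma_0,x_1,\gamma_1,\dots$ by drawing $x_0 \sim \nu$ and then alternately $\gamma_j \sim \tilde{h}(\cdot|x_j)$ and $x_{j+1} \sim \tilde{s}(\cdot|\gamma_j)$. Under this construction $\{x_j\}$ is a copy of $\{\Psi_j\}$ driven by $k$, and $\{\gamma_j\}$ is a copy of $\{\tilde{\Psi}_j\}$ driven by $\tilde{k}$, started from $\tilde{\Psi}_0 = \gamma_0 \sim \tilde{\nu}$, where $\tilde{\nu}$ is the measure with density $\gamma \mapsto \int_{\X} \tilde{h}(\gamma|x)\,\nu(\df x)$. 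The whole point of the interleaving is the bookkeeping identity that $\Psi_m = x_m$ is obtained from $\tilde{\Psi}_{m-1} = \gamma_{m-1}$ by a single $\tilde{s}$-draw; passing to laws (and checking the base case $m=1$ directly from the definition of $k$), this reads
\[
\nu K^m(\cdot) = \int_{\Y} \tilde{s}(\cdot|\gamma)\,\big(\tilde{\nu}\tilde{K}^{m-1}\big)(\df \gamma), \qquad m \ge 1 \;.
\]

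The second ingredient is the stationarity relation $\Pi(\cdot) = \int_{\Y} \tilde{s}(\cdot|\gamma)\,\tilde{\Pi}(\df \gamma)$, which holds because $\Pi$ and $\tilde{\Pi}$ are the two marginals of the common joint stationary law of the Gibbs sampler (equivalently, one $\tilde{s}$-half-step carries $\tilde{\Pi}$ to $\Pi$). Subtracting, the signed measure $\nu K^m(\cdot) - \Pi(\cdot)$ is the image, under the Markov kernel $\gamma \mapsto \tilde{s}(\cdot|\gamma)$, of the balanced signed measure $\tilde{\nu}\tilde{K}^{m-1}(\cdot) - \tilde{\Pi}(\cdot)$. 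Since total variation is non-expansive under application of a Markov kernel (the data-processing inequality, valid here precisely because $\tilde{\nu}\tilde{K}^{m-1} - \tilde{\Pi}$ has total mass zero), I would conclude
\[
\norm{\nu K^m(\cdot) - \Pi(\cdot)}_{\mbox{\tiny{TV}}} \le \norm{\tilde{\nu}\tilde{K}^{m-1}(\cdot) - \tilde{\Pi}(\cdot)}_{\mbox{\tiny{TV}}} \;.
\]

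Finally, I would apply the hypothesis to this particular $\tilde{\nu}$ with $m$ replaced by $m-1$: the right-hand side is bounded by $\mathbb{E}_{\tilde{\nu}}R(\tilde{\Psi}_0,m-1) = \int_{\Y} R(\gamma,m-1)\,\tilde{\nu}(\df \gamma)$. Substituting the density of $\tilde{\nu}$ and invoking Tonelli to exchange the order of integration (legitimate since $R \ge 0$) rewrites this as $\mathbb{E}_{\nu}\big(\int_{\Y} R(\gamma,m-1)\,\tilde{h}(\gamma|\Psi_0)\,\df \gamma\big)$, which is exactly the asserted bound. The substantive part of the argument, and the step most in need of care, is the rigorous justification of the interleaving identity for $\nu K^m$ together with the stationarity relation $\Pi = \int_{\Y}\tilde{s}(\cdot|\gamma)\tilde{\Pi}(\df\gamma)$: this is where the exact $m$-versus-$(m-1)$ index alignment and the measurability of the kernels must be verified, and where one must confirm that $\tilde{\nu}\tilde{K}^{m-1}-\tilde{\Pi}$ is a genuine signed measure of total mass zero so that the total-variation contraction can be invoked. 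The remaining manipulations are routine.
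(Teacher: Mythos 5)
Your proof is correct and follows essentially the same route as the paper's: the interleaving identity $\nu K^m = \int_{\Y} \tilde{s}(\cdot|\gamma)\,(\tilde{\nu}\tilde{K}^{m-1})(\df\gamma)$ together with $\Pi = \int_{\Y}\tilde{s}(\cdot|\gamma)\,\tilde{\Pi}(\df\gamma)$ and the non-expansiveness of total variation under the $\tilde{s}$-kernel is exactly the paper's key computation, written there at the density level as $k^{(m)}(x,x') = \int_{\Y^2}\tilde{s}(x'|\gamma')\tilde{k}^{(m-1)}(\gamma,\gamma')\tilde{h}(\gamma|x)\,\df\gamma\,\df\gamma'$ followed by an $L^1$ contraction. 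The only (immaterial) difference is bookkeeping: you mix over $\nu$ first and un-mix at the end via Tonelli, whereas the paper pulls $\mathbb{E}_\nu$ outside at the start and applies the hypothesis conditionally to $\tilde{h}(\cdot|\Psi_0)$; both require the hypothesis for an arbitrary initial law and yield the identical bound (and your aside that zero total mass is needed for the contraction is an over-qualification --- it holds for any bounded signed measure --- but harmless).
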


\begin{proof}
	Denote the $m$-step Mtds of the two chains
	as $k^{(m)}(x,x')$ and $\tilde{k}^{(m)}(\gamma,\gamma')$,
	respectively.  Moreover, let $\pi(x)$ and $\tilde{\pi}(\gamma)$ be the
	respective stationary densities for the two chains, and note that
	\[
	\pi(x) = \int_{\Y} \tilde{s}(x|\gamma) \tilde{\pi}(\gamma)
	\, \df \gamma \;.
	\]
	The key is to relate the two $m$-step Mtds.  Indeed, for $m \geq 1$,
	we have
	\[
	k^{(m)}(x,x') = \int_{\Y^2} \tilde{s}(x'|\gamma')
	\tilde{k}^{(m-1)}(\gamma,\gamma') \tilde{h}(\gamma|x) \, \df \gamma \,
	\df \gamma' \;.
	\]
	It follows that for any $m \geq 1$,
	\begin{align*}
	\|\nu K^m (\cdot) &- \Pi (\cdot)\|_{\mbox{\tiny{TV}}} \\
	& = \int_{\X}
	\left| \mathbb{E}_{\nu} k^{(m)}(\Psi_0,x') - \pi(x') \right| \, \df x'
	\\ & \leq \mathbb{E}_{\nu} \int_{\X} \left| k^{(m)}(\Psi_0,x') -
	\pi(x') \right| \, \df x' \\ & = \mathbb{E}_{\nu} \int_{\X} \left|
	\int_{\Y} \tilde{s}(x'|\gamma') \left( \int_{\Y}
	\tilde{k}^{(m-1)}(\gamma,\gamma') \tilde{h}(\gamma|\Psi_0) \, \df
	\gamma - \tilde{\pi}(\gamma') \right) \, \df \gamma' \right| \, \df x'
	\\ & \leq \mathbb{E}_{\nu} \int_{\Y} \left| \int_{\Y}
	\tilde{k}^{(m-1)}(\gamma,\gamma') \tilde{h}(\gamma|\Psi_0) \, \df
	\gamma - \tilde{\pi}(\gamma') \right| \, \df \gamma' \;.
	\end{align*}
	Letting $\tilde{\nu}$ be the probability measure associated
	with $\tilde{h}(\gamma|\Psi_0)$ yields
	\[
	\norm{\nu K^m (\cdot) - \Pi (\cdot)}_{\mbox{\tiny{TV}}} \leq
	\mathbb{E}_{\nu} \norm{\tilde{\nu} K^{m-1} -
		\tilde{\Pi}}_{\mbox{\tiny{TV}}} \leq \mathbb{E}_{\nu} \left(
	\int_{\Y} R(\gamma, m-1) \tilde{h}(\gamma|\Psi_0) \, \df \gamma
	\right).
	\]
\end{proof}

\subsection{An inequality related to Proposition~\ref{prop:V_1}}
\label{app:lambda<1}

Let
\[
\lambda = \sup_{t \in (0,1)} \sup_{\alpha \neq 0}
\frac{\|\Sigma^{-1/2}X^TD(\hat{B} + t\alpha)X \alpha
	\|^2}{\|\Sigma^{1/2}\alpha\|^2} \;.
\]

\begin{proposition}
	$\lambda < 1$.
\end{proposition}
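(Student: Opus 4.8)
The plan is to bound the displayed ratio uniformly over $t\in(0,1)$ and $\alpha\neq 0$. Writing $v=\Sigma^{1/2}\alpha$ and $M(\beta)=\Sigma^{-1/2}X^TD(\beta)X\Sigma^{-1/2}$, the ratio equals $\|M(\hat B+t\alpha)v\|^2/\|v\|^2$, which is at most $\lambda_{\max}^2\big(M(\hat B+t\alpha)\big)$ since $M$ is symmetric and non-negative definite. Because $D(\beta)\le I_n$ (Lemma~\ref{lem:horrace}), we have $M(\beta)\le \Sigma^{-1/2}X^TX\Sigma^{-1/2}$ in the Loewner order, so $\lambda_{\max}(M(\beta))\le \lambda_{\max}(\Sigma^{-1/2}X^TX\Sigma^{-1/2})$. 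When $Q$ is positive definite, $\Sigma-X^TX=Q>0$ forces $\lambda_{\max}(\Sigma^{-1/2}X^TX\Sigma^{-1/2})<1$, and the proof is immediate. The real work is the flat-prior case $Q=0$, where this eigenvalue equals $1$ and the crude bound is useless.

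For $Q=0$ I would instead keep the coupling between $D$ and $\alpha$. By scale invariance in $\alpha$ I fix $\alpha=\beta-\hat B$ and regard $\beta\in\mathbb{R}^p\setminus\{\hat B\}$ as free. Setting $w=X(\beta-\hat B)\in\mathrm{col}(X)$, the numerator is $\|\Sigma^{-1/2}X^TD(\beta)w\|^2=(D(\beta)w)^TX\Sigma^{-1}X^T(D(\beta)w)\le\|D(\beta)w\|^2$ because $X\Sigma^{-1}X^T\le I_n$ (as $X^TX\le\Sigma$), while $\|\Sigma^{1/2}\alpha\|^2=\alpha^T\Sigma\alpha=\|w\|^2$ since $\Sigma=X^TX$. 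Hence the ratio is at most
\[
\frac{\|D(\beta)w\|^2}{\|w\|^2}=\frac{\sum_i D_{ii}(\beta)^2 w_i^2}{\sum_i w_i^2}.
\]
Since each $D_{ii}(\beta)\in(0,1)$ (Lemma~\ref{lem:horrace}) and $w\neq 0$ (full column rank, (C1)), this is strictly below $1$ for every fixed $\beta$, and it is continuous in $\beta$; so its supremum over any compact set is $<1$. What remains, and is the heart of the matter, is a uniform bound as $\|\beta\|\to\infty$, where $\max_i D_{ii}(\beta)^2$ can tend to $1$.

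To control the behaviour at infinity I would take any sequence $\beta_k$ with $\|\beta_k\|\to\infty$, pass to a subsequence along which $(\beta_k-\hat B)/\|\beta_k-\hat B\|\to\theta$ for a unit vector $\theta$, and exploit that, for each index $i$ with $X_{*,i}^T\theta>0$, the quantity $X_i^T\beta_k$ is driven into the tail where the truncated-normal variance $D_{ii}(\beta_k)\to 0$. At the same time $w_k/\|\beta_k-\hat B\|\to X\theta$, so the mass of $w_k$ sitting on $\{i:X_{*,i}^T\theta>0\}$ is asymptotically annihilated. Using $\|X\theta\|=\|X_*\theta\|$ and $(X_i^T\theta)^2=(X_{*,i}^T\theta)^2$, this yields
\[
\limsup_k \frac{\|D(\beta_k)w_k\|^2}{\|w_k\|^2}\le 1-h(\theta),\qquad h(\theta):=\frac{\sum_i\big((X_{*,i}^T\theta)_+\big)^2}{\|X_*\theta\|^2}.
\]

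The final, decisive step is $\inf_{\|\theta\|=1}h(\theta)>0$, and this is exactly where posterior propriety enters. The map $\theta\mapsto\sum_i\big((X_{*,i}^T\theta)_+\big)^2$ is continuous and the denominator is continuous and strictly positive on the unit sphere by (C1), so $h$ is continuous there; moreover $h(\theta)>0$ for every $\theta\neq 0$, because (C2) supplies $a_i>0$ with $\sum_i a_iX_{*,i}=0$, whence $\sum_i a_i X_{*,i}^T\theta=0$ rules out $X_{*,i}^T\theta\le 0$ for all $i$ (that would force $X_*\theta=0$, i.e. $\theta=0$ by (C1)). Compactness then gives $\inf h=:2\delta>0$, so the ratio is below $1-\delta$ for all large $\|\beta\|$ and below $1$ on the complementary compact set, proving $\lambda<1$. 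I expect the coupling argument at infinity — showing that the vanishing entries of $D$ and the direction of $\alpha$ conspire, and that the annihilated mass is bounded below via (C2) — to be the main obstacle; the pointwise strict inequality and the positive-definite-$Q$ case are routine.
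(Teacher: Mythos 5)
Your proof is correct, and for the hard flat-prior case it takes a genuinely different route from the paper's. (Your preliminary reduction is the same and valid: the sup over $(t,\alpha)$ does equal the sup over $\beta\neq\hat{B}$ of the ratio with $D(\beta)$ and $\alpha=\beta-\hat{B}$, by degree-zero homogeneity in $\alpha$; your positive-definite-$Q$ case matches the paper's verbatim, as does the inequality $X\Sigma^{-1}X^T\le I_n$ reducing everything to $\|D(\beta)w\|^2/\|w\|^2$.) Where you diverge: the paper partitions $\mathbb{R}^p\setminus\{0\}$ into the $2^n$ cones $T_j$ determined by the signs of $(X_i^*)^T\alpha$ and exploits the monotonicity of $g$ together with the centering at $\hat{B}$ — for $\alpha\in T_j$ and \emph{every} $t>0$ the diagonal entries of $D(\hat{B}+t\alpha)$ with $i\in A_j$ are uniformly at most $1-c_0$, where $c_0=\min_i g(-(X_i^*)^T\hat{B})>0$ — so the finite-$\beta$ and at-infinity regimes are handled simultaneously, yielding the explicit bound $\lambda\le 1-c_0(2-c_0)\min_{j\in C}\inf_{T_j^*}R_j$, with strict positivity of $R_j$ supplied by Lesaffre and Albert's MLE-existence condition (C0). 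You instead split into a compact part (pointwise strict bound plus continuity) and a directional limit at infinity, where the truncated-normal variances $D_{ii}(\beta_k)\to 0$ along directions $\theta$ with $(X_i^*)^T\theta>0$, and you lower-bound the annihilated mass by $h(\theta)$ with $\inf_{\|\theta\|=1}h>0$ derived directly from Chen and Shao's (C1)--(C2); that directional fact is exactly equivalent to the (C0) the paper invokes, so both proofs consume the same propriety input. The trade-off: your compactness-and-subsequences argument is more standard and transparent but purely qualitative, whereas the paper's cone argument produces an explicit constant uniform in $t$.

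Two small points to tighten, both routine. First, on a compact set containing $\hat{B}$ the ratio is undefined at $\beta=\hat{B}$, so run the bound through $\sup_{\beta\in K}\max_i D_{ii}(\beta)^2<1$, which is continuous in $\beta$ everywhere. Second, $D_{ii}(\beta_k)\to 0$ requires $g(\theta)\to 1$ as $\theta\to-\infty$ (the truncated-normal variance vanishing as the mean escapes to the wrong side of the truncation); this is true and standard, but it goes slightly beyond the monotonicity-and-range statement of Lemma~\ref{lem:horrace}, whereas the paper's argument needs only monotonicity and positivity of $g$ at the fixed reference point $\hat{B}$.
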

\begin{proof}
	If~$Q$ is positive definite, then by Lemmas~\ref{lem:horrace}
	and~\ref{lem:weyl},
	\[
	\lambda \leq \sup_{\beta \in \mathbb{R}^p} \lambda_{\max}^2 (\Sigma^{-1/2}X^TD(\beta)X\Sigma^{-1/2}) \leq \lambda_{\max}^2(I_p - \Sigma^{-1/2}Q\Sigma^{-1/2}) < 1 \;.
	\]
	For the remainder of the proof, assume that $Q = 0$. 
	
	We begin by introducing some notations.  For $i = 1,2,
	\dots,n$, let
	\[X^*_i = X_i 1_{\{0\}}(Y_i) - X_i
	1_{\{1\}}(Y_i).\]
	Define $A_1, A_2, \dots, A_{2^n}$ to be the
	subsets of $\{1,2,\dots,n\}$. For each $A_j$, define
	\[
	T_j = \big\{ \alpha \neq 0: (X^*_i)^T \alpha \geq 0 \text{ for
		all } i \in A_j, (X^*_i)^T\alpha \leq 0 \text{ for all } i
	\in A_j^c \big \} \;.
	\]
	Let $C = \{j: T_j \neq \emptyset \}$.
	It's easy to see that $\bigcup_{j \in C} T_j =  \mathbb{R}^p\setminus \{0\}$.
	
	Note that $X\Sigma^{-1}X^T \leq I_n$. Thus, for any $\alpha \in \mathbb{R}^p$ and $t \in (0,1)$,
	\[
	\|\Sigma^{-1/2}X^TD(\hat{B} + t\alpha)X \alpha \|^2 \leq \|D(\hat{B} + t\alpha)X \alpha \|^2 \;.
	\]
	Suppose that $\alpha \in T_j$, then by Lemma~\ref{lem:horrace}, for all $i \in A_j$ and $t > 0$, the $i$th diagonal of $D(\hat{B} + t\alpha)$ satisfies
	\begin{align*}
	1 - g(X_i^T(\hat{B} + t\alpha)) 1_{\{1\}}(Y_i)  &- g(-X_i^T(\hat{B} + t\alpha)) 1_{\{0\}}(Y_i) \\ &= 1 - g((-X_i^*)^T(\hat{B} + t\alpha)) \in (0,1-c_0],
	\end{align*}
	where
	\[
	c_0 = \min_{1 \leq i \leq n} g(-(X_i^*)^T\hat{B}) \in (0,1) \;.
	\]
	As a result, whenever $\alpha \in T_j$,
	\[
	\|D(\hat{B} + t\alpha)X\alpha\|^2 \leq \|X\alpha\|^2 - c_0(2-c_0) \sum_{i \in A_j} \|X_i^T\alpha\|^2
	\]
	for all $t \in (0,1)$. It follows that
	\[
	\lambda \leq 1 - c_0(2-c_0)\min_{j \in C} \inf_{\alpha \in T_j} \sum_{i \in A_j} \frac{\|X_i^T\alpha\|^2}{\|\Sigma^{1/2}\alpha\|^2} \;.
	\]
	Let $R_j(\alpha) = \sum_{i \in A_j} \|X_i^T\alpha\|^2/\|\Sigma^{1/2}\alpha\|^2$. Note that for any constant $c \neq 0$, $R_j(c\alpha) = R_j(\alpha)$. Hence, to show that $\lambda < 1$, it suffices to verify that for each $j \in C$, $\inf_{\alpha \in T^*_j} R_j(\alpha) >0$,
	where $T^*_j = \{\alpha \in T_j: \|\alpha\| = 1\}$.
	Since $T^*_j$ is compact, and $R_j(\alpha)$ is continuous in $T^*_j$, it's enough to show that for all $\alpha \in T^*_j$, $R_j(\alpha) > 0$. 
	
	\citet{lesaffre1992existence} showed that $\hat{B}$ exists if and only if the following condition holds:
	\begin{enumerate}
		\item[($C0$)] For every $\alpha \in \mathbb{R}^p \setminus \{0\}$,
		there exists an $i \in \{1,2,\dots,n\}$ such that either $X_i^T
		\alpha \, 1_{\{1\}}(Y_i) < 0$ or $X_i^T \alpha 1_{\{0\}}(Y_i) > 0$.
	\end{enumerate}
	Hence, for any $\alpha \in \mathbb{R}^p$, there exists $i \in \{1,2,\dots,n\}$ such that $(X^*_i)^T\alpha > 0$. This implies that for any $j \in C$ and $\alpha \in T_j^*$, there exists $i \in A_j$ such that $\|X_i^T\alpha\| > 0$. As a result, $R_j(\alpha) > 0$ for any $\alpha \in T_j^*$.
\end{proof}

\subsection{Proposition~\ref{prop:lambdasimple}}
\label{app:lambdasimple}

\vspace*{2mm}
\noindent
{\bf Proposition~\ref{prop:lambdasimple}.\;}
\textit{An upper bound on $\lambda^{1/2}$ in Proposition~\ref{prop:driftmin-n} is}
\[
\lambda_{\max} \left( \Sigma^{-1/2} X^TX
\Sigma^{-1/2} \right) - \frac{2}{\pi} \min_{1 \leq j \leq 2^p}
\lambda_{\min} \left( \Sigma^{-1/2} W(S_j) \Sigma^{-1/2} \right)  \;.
\]

\begin{proof}
	For $\theta \in \{0,1\}$ and $\gamma,\beta \in \mathbb{R}^p$, define
	\[
	u(\gamma;\theta,\beta) = \left\{
	\begin{array}{@{}ll@{}}
	1 & \mbox{if $\theta = 1$ and $\gamma^T\beta > 0$} \\
	1 & \mbox{if $\theta = 0$ and $\gamma^T\beta < 0$} \\
	0 & \text{otherwise} \;.
	\end{array} \right.
	\]
	By Lemma~\ref{lem:horrace}, for any $\beta \in \mathbb{R}^p$,
	\[
	1 - 1_{\{1\}}(Y_i)g\left(X_i^T\beta \right) -
	1_{\{0\}}(Y_i)g\left(-X_i^T\beta \right) \leq 1 - \frac{2}{\pi}\big(
	1 - u(X_i; Y_i, \beta) \big) \;.
	\]
	By Lemma~\ref{lem:weyl},
	\begin{align*}
	\lambda^{1/2} &= \sup_{t \in (0,1)} \sup_{\alpha \neq 0} \frac{\|\Sigma^{-1/2}X^TD(\hat{B} + t\alpha)X\alpha\|}{\|\Sigma^{1/2}\alpha \|} \\
	&\leq \sup_{\beta \in \mathbb{R}^p} \lambda_{\max} \Big( \Sigma^{-1/2}
	X^T D(\beta) X \Sigma^{-1/2} \Big)\\ & \leq \lambda_{\max}
	\left( \Sigma^{-1/2} X^TX \Sigma^{-1/2} \right) \\
	& \hspace{7mm} - \frac{2}{\pi}
	\inf_{\beta \in \mathbb{R}^p} \lambda_{\min} \left\{ \Sigma^{-1/2}
	\left( \sum_{i=1}^{n} X_i \big( 1 - u(X_i;Y_i, \beta) \big) X_i^T
	\right) \Sigma^{-1/2} \right\} \;.
	\end{align*}
	Now, since $u(X_i; Y_i, 0) = 0$,
	\[
	1 - u(X_i;Y_i,0) \geq 1 - u(X_i;Y_i,\beta)
	\]
	for any $\beta \in \mathbb{R}^p$.  Therefore,
	\begin{align*}
	\frac{2}{\pi} \inf_{\beta \in \mathbb{R}^p} \lambda_{\min} & \left[
	\Sigma^{-1/2} \left\{ \sum_{i=1}^{n} X_i \big( 1 - u(X_i;Y_i, \beta)
	\big) X_i^T \right\} \Sigma^{-1/2} \right] \\ & = \frac{2}{\pi}
	\inf_{\beta \neq 0} \lambda_{\min} \left[ \Sigma^{-1/2} \left\{
	\sum_{i=1}^{n} X_i \big( 1 - u(X_i;Y_i, \beta) \big) X_i^T \right\}
	\Sigma^{-1/2} \right] \;.
	\end{align*}
	For $j = 1,2,\dots, 2^p$, let $S'_j$ be the closure of
	$S_j$.  Note that if a vector $\beta \in S'_j \setminus \{0\}$ for
	some~$j$, then for any $\theta \in \{0,1\}$ and $\gamma \in S_j \cup
	(-S_j)$,
	\[
	u(\gamma;\theta,\beta) = \left\{
	\begin{array}{@{}ll@{}}
	1_{\{1\}}(\theta) & \mbox{if $\gamma \in S_j$} \\ 1_{\{0\}}(\theta) &
	\mbox{if $\gamma \in -S_j$} \;.
	\end{array} \right.
	\]
	Since $\bigcup_{j=1}^{2^p} S'_j = \mathbb{R}^p$, by Lemma~\ref{lem:weyl},
	\begin{align*}
	\frac{2}{\pi} \inf_{\beta \neq 0} &\lambda_{\min}  \left[
	\Sigma^{-1/2} \left\{ \sum_{i=1}^{n} X_i \big( 1 - u(X_i;Y_i, \beta)
	\big) X_i^T \right\} \Sigma^{-1/2} \right] \\ & = \frac{2}{\pi}
	\min_{1 \leq j \leq 2^p} \inf_{\beta \in S'_j \setminus \{0\}}
	\lambda_{\min} \left[ \Sigma^{-1/2} \left\{ \sum_{i=1}^{n} X_i \big(
	1 - u(X_i;Y_i, \beta) \big) X_i^T \right\} \Sigma^{-1/2} \right]
	\\ & \geq \frac{2}{\pi} \min_{1 \leq j \leq 2^p} \lambda_{\min}
	\bigg\{ \Sigma^{-1/2} \bigg( \sum_{X_i \in S_j} X_i 1_{\{0\}}(Y_i)
	X_i^T \\ 
	& \hspace{40mm} + \sum_{X_i\in-S_j} X_i 1_{\{1\}}(Y_i) X_i^T \bigg)
	\Sigma^{-1/2} \bigg\} \;.
	\end{align*}
	The result follows immediately.
\end{proof}

\subsection{Assumption (A3) allows for an intercept}
\label{app:intercept}

Suppose that $X_1$ is a $p$-dimensional random vector with $p>1$, such
that the first component of $X_1$ is 1, and the remaining $p-1$
components follow a distribution that admits a pdf with respect to
Lebesgue measure that is positive over the open ball $B_c := \{\gamma
\in \mathbb{R}^{p-1}: \|\gamma\|^2 < c \}$, where $c > 0$.  We will
demonstrate that (A3) is satisfied.  Let $\beta \in \mathbb{R}^p
\setminus \{0\}$ be arbitrary.  The set
\[
\left \{\gamma \in \mathbb{R}^p: \gamma^T \beta = 0 \right\} \cap
\left\{ (1,\gamma^T)^T: \gamma \in B_c \right\}
\]
is in a hyperplane, whose dimension is at most $p-2$.  Meanwhile, the
support of $X_1$'s distribution has positive Lebesgue measure on the
$(p-1)$-dimensional hyperplane $\left\{ (1,\gamma^T)^T: \gamma \in
\mathbb{R}^{p-1} \right\}$.  Thus $\mathbb{P}(X_1^T\beta \neq 0) =
1$. On the other hand, it's easy to verify that $\mathbb{P} \big( X_1
\in S_j \cup (-S_j) \big) > 0$. Thus,
\begin{align*}
\mathbb{P} \Big( (1_{S_j}(X_1) +& 1_{S_j}(-X_1) ) X_1^T\beta
\neq 0 \Big) \\
&= \mathbb{P} \Big( (1_{S_j}(X_1) + 1_{S_j}(-X_1)
) \neq 0 \text{ and } X_1^T\beta \neq 0 \Big) > 0 \;,
\end{align*}
so (A3) holds.

\subsection{Proposition~\ref{prop:proper-n}}
\label{app:proper-n}

\vspace*{2mm}
\noindent
{\bf Proposition~\ref{prop:proper-n}.\;} \textit{Under assumptions
	(A1)-(A3), almost surely, the posterior distribution is proper for
	sufficiently large~$n$.}

\begin{proof}
	It suffices to consider the case that $Q=0$.  Fix $n$ and $p$.
	\citet{chen2000propriety} proved that the posterior is proper
	if the following two conditions are satisfied:
	\begin{enumerate}
		\item[($C1$)] $X$ has full column rank;
		\item[($C3$)] The maximum likelihood estimator of
		$\beta$ exists in $\mathbb{R}^p$.
	\end{enumerate}
	(These two conditions are equivalent to (C1) and (C2) in
	Subsection~\ref{ssec:basics}.)  As mentioned in
	Subsection~\ref{app:lambda<1}, (C3) holds if and only if the
	following condition is satisfied:
	\begin{enumerate}
		\item[($C0$)] For every $\beta \in \mathbb{R}^p
		\setminus \{0\}$, there exists an $i \in
		\{1,2,\dots,n\}$ such that either $X_i^T \beta \,
		1_{\{1\}}(Y_i) < 0$ or $X_i^T \beta 1_{\{0\}}(Y_i) >
		0$.
	\end{enumerate}
	We can therefore prove the result by showing that, almost
	surely, (C1) and (C0) will hold for all sufficiently large
	$n$.  As mentioned previously, (A2) ensures that, almost
	surely, (C1) will hold for all sufficiently large $n$.  Now
	recall that $\{S_j\}_{j=1}^{2^p}$ denotes the set of open
	orthants in $\mathbb{R}^p$.  As before, let $S'_j$ denote the
	closure of $S_j$.  It follows from (A1) and (A3) that, for
	every $j \in \{1,2,\dots,2^p\}$, at least one of the following
	two statements must hold:
	\begin{itemize}
		\item $\mathbb{P}\big( X_1 \in S_j, \; Y_1 = 1 \big) >
		0 \;\; \mbox{and} \;\; \mathbb{P}\big( X_1 \in S_j,
		\; Y_1 = 0 \big) > 0$ \,,
		\item $\mathbb{P}\big( X_1 \in -S_j, \; Y_1 = 1 \big) > 0 \;\;
		\mbox{and} \;\; \mathbb{P}\big( X_1 \in -S_j, \; Y_1 = 0 \big) >
		0$ \,.
	\end{itemize}
	Thus, almost surely, when~$n$ is sufficiently large
	(say $n > N$), for each $j \in \{1,2,\dots,2^p\}$ there exists
	$i',i'' \in \{1,2,\dots,n\}$ such that either $X_{i'}, X_{i''}
	\in S_j$, $(Y_{i'}, Y_{i''}) = (1,0)$, or $X_{i'}, X_{i''} \in
	-S_j$, $(Y_{i'}, Y_{i''}) = (1,0)$.  Now suppose that $n > N$,
	and let $\beta \in \mathbb{R}^p \setminus \{0\}$ be arbitrary.
	Since $\cup_{j=1}^{2^p} (S'_j \setminus \{0\}) = \mathbb{R}^p
	\setminus \{0\}$, there exists an integer $j(\beta)$ such that
	$\beta \in S'_{j(\beta)} \setminus \{0\}$. One can pick~$i'$
	and~$i''$ such that either $X_{i'}, X_{i''} \in S_{j(\beta)}$,
	$(Y_{i'}, Y_{i''}) = (1,0)$, or $X_{i'}, X_{i''} \in
	-S_{j(\beta)}$, $(Y_{i'}, Y_{i''}) = (1,0)$. Note that for any
	$\gamma \in S_{j(\beta)}$, $\gamma^T \beta > 0$. Depending on
	whether $X_{i'}, X_{i''} \in S_{j(\beta)}$ or $X_{i'}, X_{i''}
	\in -S_{j(\beta)}$, (C0) holds with either $i=i''$ or $i=i'$.
\end{proof}

\subsection{Proposition~\ref{prop:instability}}
\label{app:instability}

\vspace*{2mm}

\noindent
{\bf Proposition~\ref{prop:instability}.\;} \textit{Assume that (A1)
	and (A2) hold, and that there exists $\beta_* \in \mathbb{R}^p$ such
	that $\beta_* \neq 0$ and $G(\gamma) = G_*(\gamma^T \beta_*)$ for
	all $\gamma \in \mathbb{R}^p$, where $G_*:\mathbb{R} \to (0,1)$ is a
	strictly increasing function such that $G_*(0)=1/2$.  Then, almost
	surely, any drift and minorization based on $V_0(\beta) =
	\norm{\Sigma^{1/2} \beta}^2$ is necessarily unstable in~$n$.}

\begin{proof}
	We prove the result for the flat prior, and leave the extension to the
	reader.  Note that $V_0(\beta) = \beta^T X^TX \beta$, and assume that
	there exist $\lambda := \lambda(\mathcal{D}_n) < 1$ and $L :=
	L(\mathcal{D}_n) < \infty$ such that
	\[
	\int_{\mathbb{R}^p} V_0(\beta') k_{\mbox{\scriptsize{AC}}}(\beta,\beta') \, \df \beta'
	\leq \lambda V_0(\beta) + L \;.
	\]
	Suppose further that there exists $d := d(\mathcal{D}_n) >
	2L/(1-\lambda)$ such that, for all $\beta$ with $V_0(\beta) < d$,
	\[
	k_{\mbox{\scriptsize{AC}}}(\beta,\beta') \geq \varepsilon q(\beta') \;,
	\]
	where $\varepsilon := \varepsilon(\mathcal{D}_n) > 0$, and $q(\beta')
	:= q(\beta'|Y,X)$ is a pdf on $\mathbb{R}^p$.  We will show that
	$\varepsilon \to 0$ as $n \to \infty$ almost surely.
	
	Our first step is to show $L \to \infty$.  Let $\beta = 0$, then
	$V_0(\beta) = 0$, and
	\begin{align*} 
	\int_{\mathbb{R}^p} V_0(\beta') k_{\mbox{\scriptsize{AC}}}(\beta,&\beta') \, \df \beta'\\
	= p + & \mathrm{tr} \big \{ (X^TX)^{-1}X^T \mathrm{var}(Z|B=0,Y,X) X
	(X^TX)^{-1} \big \} \\ & + \mathbb{E}(Z^T|B=0,Y,X)X (X^TX)^{-1}X^T
	\mathbb{E}(Z|B=0,Y,X) \;.
	\end{align*}
	Hence, when $\beta=0$, a lower bound for $\int_{\mathbb{R}^p}
	V_0(\beta') k_{\mbox{\scriptsize{AC}}}(\beta,\beta') \, \df \beta'$ is given by
	\begin{equation} 
	\label{eq:largeL}
	\begin{aligned}
	&V_0(\beta) + p + \\ &n \Bigg( \frac{1}{n} \sum_{i=1}^{n}
	X_i\mathbb{E}(Z_i|B=0,Y,X) \Bigg)^T \left( \frac{X^TX}{n}
	\right)^{-1} \Bigg( \frac{1}{n} \sum_{i=1}^{n}
	X_i\mathbb{E}(Z_i|B=0,Y,X) \Bigg) \;.
	\end{aligned}
	\end{equation}
	Note that $\{X_i\mathbb{E}(Z_i|B=0,Y,X)\}_{i=1}^n$ are iid. By
	\eqref{eq:evtns}, we have
	\begin{align*}
	\mathbb{E} \big\{ X_1\mathbb{E}(Z_1|B=0,Y,X) \big\} &= \mathbb{E}
	\bigg\{ \bigg( \sqrt{\frac{2}{\pi}} 1_{\{1\}}(Y_1) -
	\sqrt{\frac{2}{\pi}} 1_{\{0\}}(Y_1) \bigg) X_1 \bigg\}\\ &=
	\sqrt{\frac{2}{\pi}} \mathbb{E} \left\{ \left( 2G_*(X_1^T\beta_*) - 1
	\right) X_1 \right\} \,.
	\end{align*}
	Now, since $\mathbb{E}X_1X_1^T$ is positive definite and $\beta_* \neq
	0$, $\beta_*^T \mathbb{E}X_1X_1^T \beta_* > 0$, indicating that
	$\mathbb{P}(X_1^T\beta_* \neq 0) > 0$. It's easy to see that $(
	2G_*(X_1^T\beta_*) - 1 ) \beta_*^TX_1 > 0$ whenever $X_1^T\beta_* \neq
	0$, which implies that $ \beta_*^T \mathbb{E} \left\{ \left(
	2G_*(X_1^T\beta_*) - 1 \right) X_1 \right\} > 0$.  As a result,
	$\mathbb{E} \left\{ \left( 2G_*(X_1^T\beta_*) - 1 \right) X_1 \right\}
	\neq 0$.  Then, by the strong law and (A2), almost surely,
	\begin{align*}
	\lim_{n \to \infty} &\Bigg( \frac{1}{n} \sum_{i=1}^{n} X_i 
	\mathbb{E}(Z_i|B=0,Y,X) \Bigg)^T \\
	& \hspace{25mm} \left( \frac{X^TX}{n} \right)^{-1}
	\Bigg( \frac{1}{n} \sum_{i=1}^{n} X_i\mathbb{E}(Z_i|B=0,Y,X) \Bigg)
	\\  &= \frac{2}{\pi} \, \mathbb{E} \{ ( 2G_*(X_1^T\beta_*)
	- 1 ) X_1 \}^T \big( \mathbb{E} X_1 X_1^T \big)^{-1}
	\mathbb{E} \{ ( 2G_*(X_1^T\beta_*) - 1 ) X_1
	\} > 0 \;.
	\end{align*}
	It then follows from~\eqref{eq:largeL} that there exists a positive
	constant~$c$ such that almost surely, $L > cn$ for any large
	enough~$n$.
	
	Note that $ d > 2L$.  By (A2), $X^TX$ is of order~$n$.  Thus, almost
	surely, there exists a positive constant~$b$ such that for all
	sufficiently large~$n$,
	\[
	\big\{ \beta \in \mathbb{R}^p: \|\beta\|^2 \leq b^2 \big\} \subset
	\big\{ \beta \in \mathbb{R}^p: V_0(\beta) < d \big\} \;.
	\]
	Let $\gamma \in \mathbb{R}^p$ be a fixed vector such that $0 <
	\|\gamma\|^2 \leq b^2$. Then both~$\gamma$ and~$-\gamma$ are in the
	small set $\big\{ \beta \in \mathbb{R}^p: V_0(\beta) < d \big\}$ for
	all sufficiently large~$n$.  We now investigate the conditional
	densities $k_{\mbox{\scriptsize{AC}}}(\gamma,\beta')$ and $k_{\mbox{\scriptsize{AC}}}(-\gamma,\beta')$.  Let
	$(B_1,B_2)$ be a random vector such that the conditional distribution
	of $B_2$ given $B_1=\beta$ is precisely the Mtd
	$k_{\mbox{\scriptsize{AC}}}(\beta,\beta')$. Then
	\[
	\mathbb{E}(B_2|B_1=\beta) = (X^TX)^{-1} \sum_{i=1}^{n} X_i
	\mathbb{E}(Z_i|B=\beta,Y,X) \;.
	\]
	Thus,
	\begin{align}
	\label{eq:EB2B1}
	\big \| \mathbb{E}(B_2|B_1 = \gamma) &- \mathbb{E}(B_2|B_1 =
	-\gamma) \big \|^2 \nonumber \\ & = \bigg\| \left( \frac{X^TX}{n}
	\right)^{-1} \bigg\{ \frac{1}{n} \sum_{i=1}^{n} X_i \big(
	\mathbb{E}(Z_i|B = \gamma,Y,X) \nonumber \\& \hspace{40mm} - \mathbb{E}(Z_i|B = -\gamma,Y,X)
	\big) \bigg\} \bigg\|^2 \;.
	\end{align}
	Now using \eqref{eq:evtns}, we have
	\begin{align*}
	& \mathbb{E}(Z_i|B = \gamma,Y,X) - \mathbb{E}(Z_i|B = -\gamma,Y,X)
	\\ & = \bigg( 2X_i^T\gamma +
	\frac{\phi(X_i^T\gamma)}{\Phi(X_i^T\gamma)} -
	\frac{\phi(X_i^T\gamma)}{1 - \Phi(X_i^T\gamma)} \bigg)
	1_{\{1\}}(Y_i) \\& \hspace{15mm} + \bigg( 2X_i^T\gamma -
	\frac{\phi(X_i^T\gamma)}{1-\Phi(X_i^T\gamma)} +
	\frac{\phi(X_i^T\gamma)}{\Phi(X_i^T\gamma)} \bigg) 1_{\{0\}}(Y_i)
	\\ & = 2X_i^T\gamma + \frac{\phi(X_i^T\gamma)}{\Phi(X_i^T\gamma)} -
	\frac{\phi(X_i^T\gamma)}{1 - \Phi(X_i^T\gamma)} \;.
	\end{align*}
	Another appeal to the strong law yields
	\begin{align*}
	&\lim_{n \to \infty} \frac{1}{n} \sum_{i=1}^{n} X_i \big(
	\mathbb{E}(Z_i|B = \gamma,Y,X)  - \mathbb{E}(Z_i|B = -\gamma,Y,X)
	\big) \\ & = \mathbb{E} \bigg\{ \bigg( 2X_1^T\gamma +
	\frac{\phi(X_1^T\gamma)}{\Phi(X_1^T\gamma)} -
	\frac{\phi(X_1^T\gamma)}{1 - \Phi(X_1^T\gamma)} \bigg) X_1 \bigg \}
	\;.
	\end{align*}
	As in Subsection~\ref{app:tn}, let $Z_+ \sim \mbox{TN}(\theta,1;1)$
	and $Z_- \sim \mbox{TN}(\theta,1;0)$.  Note that
	\[
	2\theta + \frac{\phi(\theta)}{\Phi(\theta)} - \frac{\phi(\theta)}{1 -
		\Phi(\theta)} = \mathbb{E}Z_+ + \mathbb{E}Z_- \;.
	\]
	Since both expectations on the right-hand side are (strictly) increasing
	functions of $\theta$, it follows that the left-hand side is positive
	when $\theta>0$ and negative when $\theta<0$.  Thus,
	\begin{equation}
	\label{eq:misc1}
	\left( 2X_1^T\gamma + \frac{\phi(X_1^T\gamma)}{\Phi(X_1^T\gamma)} -
	\frac{\phi(X_1^T\gamma)}{1 - \Phi(X_1^T\gamma)} \right) \gamma^T X_1
	> 0
	\end{equation}
	whenever $X_1^T \gamma \neq 0$.  Since $\mathbb{E} X_1X_1^T$ is
	positive definite and $\gamma \neq 0$, $\mathbb{P}(X_1^T\gamma \neq 0)
	> 0$.  Taking expectation on both sides of~\eqref{eq:misc1} yields
	\[
	\gamma^T \mathbb{E} \bigg\{ \bigg( 2X_1^T\gamma +
	\frac{\phi(X_1^T\gamma)}{\Phi(X_1^T\gamma)} -
	\frac{\phi(X_1^T\gamma)}{1 - \Phi(X_1^T\gamma)} \bigg) X_1 \bigg \} >
	0 \;,
	\]
	and it follows that
	\[
	\mathbb{E} \bigg\{ \bigg( 2X_1^T\gamma +
	\frac{\phi(X_1^T\gamma)}{\Phi(X_1^T\gamma)} -
	\frac{\phi(X_1^T\gamma)}{1 - \Phi(X_1^T\gamma)} \bigg) X_1 \bigg \}
	\ne 0 \;.
	\]
	By \eqref{eq:EB2B1} and (A2), almost surely,
	\begin{align*}
	\lim_{n \to \infty} &\big \| \mathbb{E}(B_2|B_1 = \gamma)  -
	\mathbb{E}(B_2|B_1 = -\gamma) \big \|^2 \nonumber \\ & = \bigg\|
	\big( \mathbb{E} X_1 X_1^T \big)^{-1} \mathbb{E} \bigg\{ \bigg(
	2X_1^T\gamma + \frac{\phi(X_1^T\gamma)}{\Phi(X_1^T\gamma)} -
	\frac{\phi(X_1^T\gamma)}{1 - \Phi(X_1^T\gamma)} \bigg) X_1 \bigg \}
	\bigg\|^2\\& =: c_\gamma > 0 \;.
	\end{align*}
	Next, consider $\mathrm{var}(B_2|B_1 = \gamma)$ and
	$\mathrm{var}(B_2|B_1 = -\gamma)$. Note that for any $\beta \in
	\mathbb{R}^p$,
	\[
	\mathrm{var}(B_2|B_1=\beta) = (X^TX)^{-1} + (X^TX)^{-1} X^T \,
	\mathrm{var}(Z|B=\beta,Y,X) \, X (X^TX)^{-1} \;.
	\]
	It follows from Lemma~\ref{lem:horrace} that
	$\mathrm{var}(Z_i|B=\beta,Y,X) < 1$ for all $i \in \{1,2,\dots,n\}$.
	Therefore,
	\[
	\mathrm{var}(B_2|B_1=\beta) \leq 2(X^TX)^{-1} \;.
	\]
	Then by Chebyshev's inequality, for any $\delta > 0$,
	\begin{equation*}
	\mathbb{P} \Big( \big\| B_2 - \mathbb{E}(B_2|B_1 = \gamma) \big\|^2
	> \delta \,\Big| B_1 = \gamma \Big) \leq \frac{2}{\delta} \,
	\mathrm{tr} \big\{ (X^TX)^{-1} \big\} \;,
	\end{equation*}
	and
	\begin{equation*}
	\mathbb{P} \Big( \big\| B_2 - \mathbb{E}(B_2|B_1 = -\gamma) \big\|^2
	> \delta \,\Big| B_1 = -\gamma \Big) \leq \frac{2}{\delta} \,
	\mathrm{tr} \big\{ (X^TX)^{-1} \big\} \;.
	\end{equation*}
	Note that by (A2), almost surely, $\mathrm{tr}\,\{(X^TX)^{-1}\} =
	O(n^{-1})$.  We have shown in the previous paragraph that with
	probability~$1$, $ \| \mathbb{E}(B_2|B_1 = \gamma) -
	\mathbb{E}(B_2|B_1 = -\gamma) \|^2 $ converges to a positive constant
	$c_{\gamma}$.  Now, fix $\delta \in (0, c_{\gamma} / 4)$, and let $C_1
	= \big\{ \beta \in \mathbb{R}^p: \|\beta -
	\mathbb{E}(B_2|B_1=\gamma)\|^2 > \delta \big\}$, $C_2 = \big\{ \beta
	\in \mathbb{R}^p: \|\beta - \mathbb{E}(B_2|B_1=-\gamma)\|^2 > \delta
	\big\}$.  With probability~$1$, for all sufficiently large~$n$, $C_1
	\cup C_2 = \mathbb{R}^p$. As a result, almost surely,
	\begin{align*}
	\lim_{n \to \infty} \int_{\mathbb{R}^p} \min \big\{ k_{\mbox{\scriptsize{AC}}}(&\gamma,
	\beta), k_{\mbox{\scriptsize{AC}}}(-\gamma, \beta) \big\} \, \df \beta \\
	&\leq \lim_{n \to
		\infty} \bigg( \int_{C_1} k_{\mbox{\scriptsize{AC}}}(\gamma, \beta) \, \df \beta +
	\int_{C_2} k_{\mbox{\scriptsize{AC}}}(-\gamma, \beta) \, \df \beta \bigg) = 0 \;.
	\end{align*}
	Therefore,
	\[
	\varepsilon \leq \int_{\mathbb{R}^p} \inf_{\{\beta: V_0(\beta) < d\}}
	\left\{ k_{\mbox{\scriptsize{AC}}}(\beta,\beta') \right\} \, \df \beta' \to 0
	\]
	with probability~$1$.
\end{proof}

\subsection{Proposition~\ref{prop:b2nec}}
\label{app:b2nec}

\vspace*{2mm}

\noindent
{\bf Proposition~\ref{prop:b2nec}.\;} \textit{If $n=1$ and $v=0$, then
	as $X \Sigma^{-1}X^T$ tends to~$1$,
	\[
	1 - \rho_{**} = O \big( 1 - X \Sigma^{-1}X^T \big) \,.
	\]
	In particular,~$\rho_{**}$ is not bounded away from~$1$ if (B2) does not
	hold.}

\begin{proof}
	Without loss of generality, assume that $Y = Y_1 = 1$.  The A\&C chain
	has the same convergence rate as the flipped chain defined by the
	following Mtd:
	\begin{align*}
	\check{k}_{\mbox{\scriptsize{AC}}}(z,z') &:= \check{k}_{\mbox{\scriptsize{AC}}}(z,z';Y,X) \\
	& = \int_{\mathbb{R}^p}
	\pi_{Z|B,Y,X}(z'|\beta,Y,X) \pi_{B|Z,Y,X}(\beta|z,Y,X) \, \df \beta
	\;.
	\end{align*}
	The flipped chain is reversible with respect to $\pi_{Z|Y,X}(z|Y,X)$. \pcite{roberts1997geometric} Theorem~2 and
	\pcite{liu:wong:kong:1994} Lemma 2.3 imply that
	\[
	\rho_{**} \geq \gamma_0 := \frac{\mathrm{cov}(Z, Z'|Y,X)}{\mathrm{var}
		(Z|Y,X)} \;,
	\]
	where $Z'|Z=z,Y,X \sim \check{k}_{\mbox{\scriptsize{AC}}}(z,\cdot)$, and $Z|Y,X \sim
	\pi_{Z|Y,X}(\cdot|Y,X)$.  Hence, it suffices to show that
	\[
	1 - \gamma_0 = O\left( 1 - X \Sigma^{-1}X^T \right) \;.
	\]
	We begin by giving an explicit form of $\pi_{Z|Y,X}(z|Y,X)$. Note that
	\begin{align*}
	\pi_{Z|Y,X}(z|Y,X) & = \int_{\mathbb{R}^p} \pi_{Z|B,Y,X}(z|\beta,Y,X)
	\pi_{B|Y,X}(\beta|Y,X) \, \df \beta \\ & \propto 1_{(0,\infty)}(z)
	\int_{\mathbb{R}^p} \exp \bigg\{ -\frac{1}{2} (z-X\beta)^2
	-\frac{1}{2}\beta^TQ\beta \bigg\} \, \df \beta \\ & \propto
	1_{(0,\infty)}(z) \exp \bigg\{ -\frac{1}{2} (1-\psi)z^2 \bigg\} \;,
	\end{align*}
	where $\psi = X \Sigma^{-1} X^T$. Thus, $Z|Y,X \sim \mathrm{TN}(0,
	(1-\psi)^{-1}; 1)$.  It follows immediately from
	\eqref{eq:evtns} and \eqref{eq:vstns} that
	\[
	\mathbb{E}(Z|Y,X) = \mathbb{E}(Z',Y,X) = \sqrt{\frac{2}{\pi}}
	(1-\psi)^{-1/2} \;,
	\]
	and
	\[
	\mathrm{var} (Z|Y,X) = \left(1 -
	\frac{2}{\pi}\right) (1 - \psi)^{-1} \;.
	\]
	To calculate $\mathrm{cov}(Z, Z'|Y,X)$, first note that
	\[
	\mathrm{cov}(Z, Z'|Y,X) = \mathbb{E}(ZZ'|Y,X) - \frac{2}{\pi}
	(1-\psi)^{-1} \;.
	\]
	Now $Z|B,Y,X \sim \mathrm{TN}(XB, 1; 1)$ and $XB|Z,Y,X \sim
	N(\psi Z, \psi)$.  Therefore,
	\begin{align*}
	\mathbb{E}(ZZ'|Y,X) & = \psi \mathbb{E}\left( Z^2|Y,X \right) +
	\mathbb{E} \left( Z \frac{\phi(XB)}{\Phi(XB)} \bigg| Y, X \right)
	\\ & \geq \psi \mathbb{E}\left( Z^2|Y,X \right) \\ &= \psi (1 -
	\psi)^{-1} \;.
	\end{align*}
	Hence,
	\[
	\mathrm{cov}(Z, Z'|Y,X) \geq \Big(1 - \frac{2}{\pi} \Big) (1 -
	\psi)^{-1} - 1 \;.
	\]
	Collecting terms, we have
	\[
	\gamma_0 \geq 1 - \frac{(1-\psi)}{(1-2/\pi)} \;,
	\]
	and the result follows.
\end{proof}

\end{appendix}

\bibliographystyle{ims} 
\bibliography{stable}

\end{document}